\font\bbbld=msbm10 scaled\magstephalf
\newcommand{\bpartial}{\bar{\partial}}
\def \a{\alpha}
\def \p{\partial}
\newcommand{\bfC}{\hbox{\bbbld C}}
\newcommand{\bfR}{\hbox{\bbbld R}}
\newcommand{\ol}{\overline}
\newtheorem{theorem}{Theorem}[section]
\newtheorem{lemma}[theorem]{Lemma}
\newtheorem{corollary}[theorem]{Corollary}
 \theoremstyle{definition}
\newtheorem{definition}[theorem]{Definition}
\theoremstyle{remark}
\newtheorem{remark}[theorem]{Remark}
\numberwithin{equation}{section}
\begin{document}
\setlength{\baselineskip}{1.2\baselineskip}

\title[Flow for Generalized Complex Monge-Amp\`ere Type Equations]
{Parabolic Flow for Generalized complex Monge-Amp\`ere type equations 
}

\author{Wei Sun}

\address{Shanghai Center for Mathematical Sciences, Fudan University, Shanghai, China}
\email{sunwei\_math@fudan.edu.cn}

\begin{abstract}
We study the parabolic flow for generalized complex Monge-Amp\`ere type equations on closed Hermitian manifolds. We derive {\em a priori} $C^\infty$ estimates for normalized solutions, and then prove the $C^\infty$ convergence.
\end{abstract}

\maketitle

\section{Introduction}
\label{introduction}
\setcounter{equation}{0}
\medskip

Let $(M,\omega)$ be a compact Hermitian manifold of complex dimension $n\geq 2$ and $\chi$ another smooth Hermitian metric. We are interested in looking for a Hermitian metric $\chi'$ in the class $[\chi]$ satisfying
\begin{equation}
\label{introduction:elliptic-equation-origin}
	\chi'^n = \psi \sum^n_{\a = 1} b_\a \chi'^{n - \a} \wedge \omega^\a ,
\end{equation}
where $\psi$ is a smooth positive real function on $M$, $b_\a$'s are nonnegative real constants, and $\sum^n_{\a = 1} b_\a > 0$. This type of equations contains some of the most important ones in complex geometry and analysis.

The most known case is probably the complex Monge-Amp\`ere equation,
\begin{equation}
	\chi'^n = \psi \omega^n.
\end{equation}
On closed K\"ahler manifolds, this equation is equivalent to the Calabi conjecture~\cite{Calabi56, Calabi57}. In the famous work of Yau~\cite{Yau78} (see also~\cite{Aubin78}), he proved the conjecture by solving the complex Monge-Amp\`ere equation. A corollary of the Calabi conjecture is the existence of K\"ahler-Einstein metrics when the first Chern class is zero. Later, the result was extended to closed Hermitian manifolds, which was done by Cherrier~\cite{Cherrier87}, Tosatti and Weinkove~\cite{TWv10a, TWv10b}. In \cite{TWv10b}, Tosatti and Weinkove stated a Hermitian version of Calabi conjecture, which used the first Bott-Chern class instead.

All of these works are based on continuity method, while the parabolic flow method is also a powerful measure. Cao~\cite{Cao85} used K\"ahler-Ricci flow to reproduce the result of Yau~\cite{Yau78}, and Gill~\cite{Gill11} introduced Chern-Ricci flow to reproduce that of Tosatti and Weinkove~\cite{TWv10a, TWv10b}. 

Another interesting case is Donaldson's problem,
\begin{equation}
	\chi'^n = \psi \chi'^{n - 1} \wedge \omega.
\end{equation}
It was first proposed by Donaldson~\cite{Donaldson99a} in the study of moment maps. Chen~\cite{Chen00b} found out the same equation when he study the Mabuchi energy on Fano manifolds. The solvability implies a lower bound of the Mabuchi energy, and $\chi'$ is the critical metric. When $\psi$ is a constant, the equation was studied by Chen~\cite{Chen00b, Chen04}, Weinkove~\cite{Weinkove04, Weinkove06}, Song and Weinkove~\cite{SW08} using the $J$-flow.
These results were extended by Fang, Lai and Ma~\cite{FLM11} also by a parabolic flow. In \cite{FL12, FL13}, Fang and Lai studied some other parabolic flows, one of which was adopted by the author~\cite{Sun2013p} to study more general cases. 

For Donaldson's problem, continuity method seems harder to apply. The author~\cite{Sun2013e} introduced piecewise continuity method to solve the complex Monge-Amp\`ere type equations on Hermitian manifolds, which contains Donaldson's equation. Li, Shi and Yao~\cite{LiShiYao2013} reproduced the result of Song and Weinkove~\cite{SW08} by constructing a class of K\"ahler metrics. Admitting the result of Yau~\cite{Yau78}, Collins and Sz\'ekelyhidi~\cite{CollinsSzekelyhidi2014a} were also able to solve the equation by a different argument. In the argument, they construct a class of equations in form~\eqref{introduction:elliptic-equation-origin} to reach Donaldson's equation from the complex Monge-Amp\`ere equation. The approach was applied in more general equations by Sz\'ekelyhidi~\cite{Szekelyhidi2014b}.

For general cases of equation~\eqref{introduction:elliptic-equation-origin}, the solvability was conjectured by Fang, Lai and Ma~\cite{FLM11}. They solved the equation when the coefficients satisfies some special conditions. To investigate the numerical cone condition for $J$-flow conjectured in \cite{LejmiSzekelyhidi13}, Collins and Sz\'ekelyhidi~\cite{CollinsSzekelyhidi2014a} studied equation~\eqref{introduction:elliptic-equation-origin}.
Later, the author~\cite{Sun2014g} generalized the result to more general cases by piecewise continuity method.

In local coordinate charts, we may write $\omega$ and $\chi$ as
\begin{equation}
	\omega = \frac{\sqrt{-1}}{2} \sum_{i,j} g_{i\bar j} d z^i \wedge d\bar z^j
\end{equation}
and
\begin{equation}
	\chi = \frac{\sqrt{-1}}{2} \sum_{i,j} \chi_{i\bar j} d z^i \wedge d\bar z^j.
\end{equation}
For convenience, we denote
\begin{equation}
	\chi_u = \chi + \frac{\sqrt{-1}}{2} \p\bpartial u = \frac{\sqrt{-1}}{2} \sum_{i,j} (\chi_{i\bar j} + u_{i\bar j}) d z^i \wedge d\bar z^j.
\end{equation}
Therefore, equation~\eqref{introduction:elliptic-equation-origin} can be written in the form,
\begin{equation}
\label{introduction:elliptic-equation}
	\chi^n_u = \psi \sum^n_{\a = 1} b_\a \chi^{n - \a}_u \wedge \omega^\a ,\qquad \chi_u > 0.
\end{equation}
This is a fully nonlinear elliptic equation. Since the cokernal is nontrivial, it is not solvable for all $\psi > 0$. Generally, we try to find a function $u$ and a real number $b$ such that
\begin{equation}
\label{introduction:elliptic-equation-actual}
	\left\{
	\begin{aligned}
		& \chi^n_u = e^b \psi \sum^n_{\a = 1} b_\a \chi^{n - \a}_u \wedge \omega^\a, \\
		& \chi_u > 0,\qquad \sup_M u = 0 .
	\end{aligned}
	\right.
\end{equation}
Only in a few cases, we know $b = 0$ in advance. If $\chi$ and $\omega$ are both K\"ahler and $\psi$ is constant, $\psi$ is uniquely determined by
\begin{equation}
\label{introduction:definition-invariant-constant}
	\psi = c := \frac{\int_M \chi^n}{\sum^n_{\a = 1} b_\a \int_M \chi^{n - \a} \wedge \omega^\a } 
\end{equation}

To reproduce the results in \cite{Sun2014g}, we consider a parabolic flow for generalized complex Monge-Amp\`ere type equations,
\begin{equation}
\label{introduction:parabolic-flow-equation}
	\frac{\p u }{\p t} = \ln \frac{\chi^n_u}{ \sum^n_{\a = 1} b_\a \chi^{n - \a}_u \wedge \omega^\a } - \ln \psi
\end{equation}
with initial value $u (x,0) = 0$, where $\psi \in C^\infty (M)$ is positive, and $\chi_u > 0$. Following \cite{SW08, FLM11, GSun12}, we assume that there is a $C^2$ function $v$  satisfying
\begin{equation}
\label{introduction:cone-condition}
	n \chi^{n - 1}_{v} > \psi \sum^{n - 1}_{\a = 1} b_\a (n - \a) \chi^{n - \a - 1}_v \wedge \omega^\a,\,\,\text{ and } \chi_v > 0,
\end{equation}
which is called the cone condition.

In the study, the key step is probably the sharp $C^2$ estimate.
\begin{theorem}
\label{introduction:theorem-C2}
Let $(M,\omega)$ be a closed Hermitian manifold fo complex dimension $n$ and $\chi$ also a Hermitian metric. Suppose that the cone condition~\eqref{introduction:cone-condition} holds true. Then there exists a long time solution $u$ to equation~\eqref{introduction:parabolic-flow-equation}. Moreover, there are uniform constants $C$ and $A$ such that
\begin{equation}
	\Delta u + tr\chi \leq C e^{A (u - \inf_{M\times[0,t]} u)}.
\end{equation} 
\end{theorem}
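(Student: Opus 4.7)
The plan is to establish the pointwise bound on $\mathrm{tr}_\omega \chi_u = \Delta u + \mathrm{tr}\,\chi$ via a parabolic maximum principle argument; the long-time existence then follows from this $C^2$ estimate together with the usual higher-order a priori bounds and a standard parabolic continuation argument. I begin by writing the flow as $u_t = F(\chi_u) - \ln\psi$, where
\[
F(\chi_u) := \ln \chi_u^n - \ln \sum_{\alpha} b_\alpha \chi_u^{n-\alpha}\wedge \omega^\alpha,
\]
and letting $\cL := \p_t - F^{i\bar j}\nabla_i\nabla_{\bar j}$ be the linearized parabolic operator with $F^{i\bar j} = \p F/\p(\chi_u)_{i\bar j}$. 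Inside the positive cone, $F$ is strictly elliptic and concave in $\chi_u$, because $\ln \sigma_n$ is concave and $\ln(\sum_\alpha b_\alpha \sigma_{n-\alpha})$ is convex on positive Hermitian matrices.

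The cone condition \eqref{introduction:cone-condition} says $v$ is a strict subsolution: at any point, with $\chi_u$ diagonal and eigenvalues $\lambda_1\ge\cdots\ge\lambda_n>0$ relative to $\omega$, one has
\[
F^{i\bar j}(\chi_v - \chi_u)_{i\bar j} \ge \delta \sum_i F^{i\bar i} - C_0
\]
for some $\delta>0$ independent of $u$; this positive term is the engine of the argument. I would use the auxiliary function
\[
H(x,t) = \ln \lambda_1(\chi_u) + \phi\bigl(|\nabla u|^2_\omega\bigr) - A\bigl(u(x,t) - \inf_{M\times[0,t]} u\bigr),
\]
for a smooth convex increasing $\phi$ with small derivative and a large constant $A$ to be fixed. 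If $H$ is maximized on the initial slice, the bound is automatic; otherwise at an interior maximum $(x_0,t_0)$ I would work in normal coordinates for $\omega$ diagonalizing $\chi_u$, perturbing $\lambda_1$ slightly if it has multiplicity. Then $\cL H \ge 0$ combines a Kato-type good term from $\cL\ln\lambda_1$ together with the third-order cancellation provided by the concavity of $F$ (absorbing the $F^{i\bar j,k\bar\ell}$ contribution), a gradient good term $-\phi' F^{i\bar i}\lambda_i^2$ coming from $\cL\phi(|\nabla u|^2)$, and a good term $A\delta\sum_i F^{i\bar i}$ produced from $-A\cL(u-\inf u)$ through the cone condition inequality.

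The principal difficulty will be controlling the Hermitian torsion: since neither $\omega$ nor $\chi$ is K\"ahler, commuting covariant derivatives inside $\cL\ln\lambda_1$ produces non-sign-definite terms of size $\lambda_1\sum_i F^{i\bar i}$ and others mixed with $\nabla u$. The role of $\phi(|\nabla u|^2)$ is precisely to dominate the $\nabla u$-torsion via its good term $-\phi' F^{i\bar i}\lambda_i^2$, while the remaining $\lambda_1\sum_i F^{i\bar i}$ torsion is absorbed by taking $A$ sufficiently large in the cone-condition term. This forces $\lambda_1(x_0,t_0)\le C$ for a constant depending only on the geometric data. Propagating the inequality at $(x_0,t_0)$ to any $(x,t)\in M\times[0,T]$ via $H(x,t)\le H(x_0,t_0)$ then yields
\[
\lambda_1(\chi_u)(x,t) \le C e^{A(u(x,t) - \inf_{M\times[0,t]} u)},
\]
and since $\lambda_1$ and $\mathrm{tr}_\omega\chi_u$ differ by at most a factor of $n$, the stated bound follows. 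Combined with standard complex Evans--Krylov and parabolic Schauder theory, this $C^2$ estimate gives uniform $C^{k,\alpha}$ bounds on any finite time interval, and long-time existence follows by parabolic continuation.
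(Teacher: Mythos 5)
Your strategy (a parabolic maximum principle for a largest-eigenvalue quantity, with the cone condition supplying the good term through the subsolution $v$) is in the right spirit, but the test function you choose introduces a gap that defeats the point of the theorem. By including $\phi(|\nabla u|^2_\omega)$ in $H$ you make the resulting estimate depend on $\sup_{M\times[0,t]}|\nabla u|^2$: in the Hou--Ma--Wu scheme you are invoking, the good term has coefficient $\phi'\sim 1/K$ with $K=\sup(1+|\nabla u|^2)$, so at the maximum one only obtains $\lambda_1\le CK$ rather than $\lambda_1\le C$; and in the propagation step $H(x,t)\le H(x_0,t_0)$ you must also bound $\phi(|\nabla u|^2)(x_0,t_0)$, which again presupposes a gradient bound. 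The theorem asserts $\Delta u+\mathrm{tr}\,\chi\le Ce^{A(u-\inf u)}$ with $C,A$ depending only on geometric data, and this gradient-free form is exactly what the subsequent $C^0$ argument in Section~\ref{long} (via the Tosatti--Weinkove trick) consumes; no gradient estimate is available at that stage of the logical chain, so your version cannot be fed into it without an additional blow-up/Liouville argument that you do not supply. The paper avoids the gradient term altogether: it works with $w=\Delta u+\mathrm{tr}\,\chi$ and the test function $we^{\phi}$ with $\phi=-A(u-v)+E_1$, $E_1=\big(u-v-\inf_{M_t}(u-v)+1\big)^{-1}$, and controls the Hermitian torsion by the Phong--Sturm perfect square
\[
\sum_{i,j,l} S_{\a-1;i}(X^{i\bar i})^2X^{j\bar j}\Big|X_{i\bar jl}-X_{j\bar l}\tfrac{\p_i w}{w}-T^j_{il}X_{j\bar j}\Big|^2\ \ge\ 0,
\]
whose cross terms are absorbed by the extra good term $2|u_i-v_i|^2E_1^3$ generated by the $E_1$ piece of $\phi$; the conclusion then follows from a dichotomy on whether $w>AE_1^{-3/2}$ at the maximum point.

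Two further points. First, your justification of concavity is wrong: $\ln\big(\sum_\a b_\a\sigma_{n-\a}\big)$ is not convex on the positive cone (each $\ln\sigma_{n-\a}$ is concave there, and a sum of log-concave functions need not be log-convex). The concavity of $\ln\sigma_n-\ln\sum_\a b_\a\sigma_{n-\a}$ is true but requires the argument of Section~\ref{preliminary}, which combines the concavity of each $\ln(S_n/S_{n-\a})$ with a Cauchy--Schwarz inequality over the convex combination. Second, the quantitative subsolution inequality you write down, $F^{i\bar j}(\chi_v-\chi_u)_{i\bar j}\ge\delta\sum_iF^{i\bar i}-C_0$, is not an immediate consequence of \eqref{introduction:cone-condition}: it is the content of Lemma~\ref{lemma-inequality}, holds only when $w\ge N$, and its proof requires a genuine case analysis ($S_1$ large versus $X_{1\bar1}$ large with the remaining eigenvalues bounded below), using the bound \eqref{preliminary:maximum-3} on $\p_t u$.
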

\begin{remark}
As shown by Collins and Sz\'ekelyhidi~\cite{CollinsSzekelyhidi2014a}, the cone condition can be defined in the viscosity sense.
\end{remark}

The sharp $C^2$ estimate can help us to obtain $C^0$ estimate, which is probably the most difficult one in the argument. It is worth a mention that there is a direct uniform estimate for the elliptic equation by the author~\cite{Sun2014e, Sun2014u, Sun2014g} and Sz\'ekelyhidi~\cite{Szekelyhidi2014b} while that for parabolic flows is not available so far. Since there are troublesome torsion terms, we need to apply a trick due to Phong and Sturm~\cite{PhongSturm10} with making a particular perfect square (see also \cite{TWv11}). When $C^0$ and $C^2$ estimates are available, all others can be obtained by Evans-Krylov theorem~\cite{Evans82,Krylov82,Wang1992a,Wang1992b} and Schauder theory.

To reproduce the results in \cite{Sun2014g}, we need to discover some convergence property. However, it is very likely that $u(x,t)$ itself is divergent. Instead, we study the convergence of the normalized solution,
\begin{equation}
	\tilde u = u - \frac{\int_M u \omega^n}{\int_M \omega^n}.
\end{equation}

For general Hermitian manifolds, we have the following result.
\begin{theorem}
\label{introduction:theorem-hermitian}
Let $(M,\omega)$ be a closed Hermitian manifold of complex dimension $n$ and $\chi$ also a Hermitian metric. Suppose that the cone condition~\eqref{introduction:cone-condition} holds true. 
Then there exists a uniform constant $C$ such that for all time $t \geq 0$,
\begin{equation}
	\sup_{x \in M} u (x,t) - \inf_{x\in M} u(x,t) < C,
\end{equation}
given that 
\begin{equation}
\label{introduction:theorem-hermitian-condition}
	\chi^n \leq \psi \sum^n_{\a = 1} b_\a \chi^{n - \a} \wedge \omega^\a. 
\end{equation}
As a consequence, $\tilde u$ is $C^\infty$ convergent to a smooth function $\tilde u_\infty$. Moreover, there is a unique real number $b$ such that the pair $(\tilde u_\infty - \sup_M \tilde u_\infty,b)$ solves equation~\eqref{introduction:elliptic-equation-actual}.

\end{theorem}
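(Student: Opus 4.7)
I split the argument into three stages: a uniform oscillation bound on $u$, uniform higher-order estimates on $\tilde u$, and convergence through analysis of $\partial u/\partial t$.

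The plan for the oscillation bound is to differentiate \eqref{introduction:parabolic-flow-equation} in time. Setting $f = \partial u/\partial t$ and linearizing the right hand side in $\chi_u$ yields
\[
	\frac{\partial f}{\partial t} = F^{i\bar j}(\chi_u) f_{i\bar j},
\]
where $F^{i\bar j}$ is the positive-definite endomorphism obtained by differentiating the operator in $u_{i\bar j}$. At $t = 0$ we have $\chi_u = \chi$ and $u \equiv 0$, so hypothesis \eqref{introduction:theorem-hermitian-condition} translates exactly into $f(\cdot, 0) \leq 0$; the parabolic maximum principle then forces $f \leq 0$ throughout the flow. In particular $u(x, t)$ is pointwise non-increasing in $t$ and $\sup_M u(\cdot, t) \leq 0$. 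To convert this into an oscillation bound I would run a Moser iteration in the style of Tosatti--Weinkove \cite{TWv10a, TWv10b}, using Theorem \ref{introduction:theorem-C2} to bound $\Delta u + \mathrm{tr}\,\chi$ by an exponential of $u - \inf_M u$ and handling the Hermitian torsion via the Phong--Sturm perfect-square trick anticipated in the introduction. This is the main obstacle: on a genuine Hermitian manifold there is no clean integration by parts, and the sharp exponential form of Theorem \ref{introduction:theorem-C2} is precisely what permits the iteration to close.

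Once $\mathrm{osc}_M u(\cdot, t) \leq C$ is uniform in $t$, Theorem \ref{introduction:theorem-C2} gives a uniform bound on $\Delta u + \mathrm{tr}\,\chi$, so the flow is uniformly parabolic on $M \times [0, \infty)$. Because $\chi_{\tilde u} = \chi_u$, parabolic Evans--Krylov \cite{Evans82, Krylov82} combined with Schauder theory \cite{Wang1992a, Wang1992b} delivers uniform $C^\infty$ estimates on $\tilde u$.

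For convergence I return to $f = \partial u/\partial t$, now the solution of a linear parabolic equation with smooth, uniformly elliptic coefficients. The maximum principle makes $M(t) := \sup_M f(\cdot, t)$ non-increasing and $m(t) := \inf_M f(\cdot, t)$ non-decreasing, so both limits exist. Parabolic Harnack applied to the non-negative solutions $M(t) - f$ and $f - m(t)$ on each unit slab $M \times [t, t + 1]$ produces a contraction
\[
	M(t + 1) - m(t + 1) \leq \theta \bigl( M(t) - m(t) \bigr), \qquad \theta \in (0, 1),
\]
so $\mathrm{osc}_M f \to 0$ exponentially and $f$ converges uniformly to a constant $b$. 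Then $\partial_t \tilde u = f - \bar f$ has exponentially decaying $C^0$ norm, so $\tilde u$ converges in $C^0$; interpolating against the uniform $C^\infty$ bounds upgrades this to $C^\infty$ convergence. Passing to the limit in \eqref{introduction:parabolic-flow-equation} shows that $(\tilde u_\infty - \sup_M \tilde u_\infty, b)$ solves \eqref{introduction:elliptic-equation-actual}. Uniqueness of $b$ comes from the standard maximum principle comparison: if $(w_1, b_1)$ and $(w_2, b_2)$ were two normalized solutions with $b_1 > b_2$, then evaluating both equations at a maximum point of $w_1 - w_2$ and using the monotonicity of the operator in the complex Hessian (guaranteed by the cone condition \eqref{introduction:cone-condition}) yields a contradiction.
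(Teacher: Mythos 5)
Your proposal is correct and follows essentially the same route as the paper: the hypothesis \eqref{introduction:theorem-hermitian-condition} plus the maximum principle give $\partial_t u \leq 0$, so $\inf_{M\times[0,t]}u=\inf_M u(\cdot,t)$ and the sharp estimate of Theorem~\ref{introduction:theorem-C2} combined with the Tosatti--Weinkove Moser iteration yields the uniform oscillation bound, after which Evans--Krylov/Schauder and the Cao--Gill Harnack argument for $\partial_t u$ give $C^\infty$ convergence of $\tilde u$ exactly as in Sections~\ref{long} and~\ref{convergence}.
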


A corollary follows from Theorem~\ref{introduction:theorem-hermitian}.
\begin{corollary}
\label{introduction:corollary-hermitian}
Let $(M,\omega)$ be a closed Hermitian manifold of complex dimension $n$ and $\chi$ also a Hermitian metric. Suppose that the cone condition~\eqref{introduction:cone-condition} holds true, and $b_1 \cdots, b_{n - 1}$ and $\psi$ are fixed. Then there is a constant $K \geq 0$ such that if $c_n \geq K$  there exists a uniform constant $C$ such that for all time $t \geq 0$,
\begin{equation}
	\sup_{x \in M} u (x,t) - \inf_{x\in M} u(x,t) < C,
\end{equation}
As a consequence, $\tilde u$ is $C^\infty$ convergent to a smooth function $\tilde u_\infty$. Moreover, there is a unique real number $b$ such that the pair $(\tilde u_\infty - \sup_M \tilde u_\infty,b)$ solves equation~\eqref{introduction:elliptic-equation-actual}. 

\end{corollary}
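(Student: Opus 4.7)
The plan is to deduce Corollary~\ref{introduction:corollary-hermitian} directly from Theorem~\ref{introduction:theorem-hermitian}. Since the cone condition is assumed as a hypothesis and $b_1,\ldots,b_{n-1}$ and $\psi$ are held fixed, the only remaining freedom is in the last coefficient $b_n$. The strategy is to show that condition~\eqref{introduction:theorem-hermitian-condition} is automatically satisfied once $b_n$ is chosen sufficiently large, and then to invoke Theorem~\ref{introduction:theorem-hermitian} verbatim.

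The key steps are as follows. First, I would observe that $b_n$ does not appear in the cone condition~\eqref{introduction:cone-condition}, whose sum only runs up to $\alpha = n-1$; thus enlarging $b_n$ preserves that hypothesis. Second, since $M$ is compact and $\psi$ is smooth and positive, the ratio of the positive $(n,n)$-forms $\chi^n$ and $\psi\omega^n$ is a smooth positive function on $M$, hence bounded above by some constant $K_0 > 0$ depending only on $\chi$, $\omega$, and $\psi$. Third, for every $\alpha \in \{1,\ldots,n-1\}$ the wedge product $\chi^{n-\alpha}\wedge\omega^\alpha$ is a nonnegative $(n,n)$-form because $\chi$ and $\omega$ are both positive, so these intermediate contributions may simply be discarded from below. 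Setting $K := K_0$, for every $b_n \geq K$ we then obtain
\[
\psi\sum_{\alpha = 1}^n b_\alpha\,\chi^{n-\alpha}\wedge\omega^\alpha \;\geq\; \psi\, b_n\, \omega^n \;\geq\; K_0\,\psi\,\omega^n \;\geq\; \chi^n,
\]
which is precisely~\eqref{introduction:theorem-hermitian-condition}. Applying Theorem~\ref{introduction:theorem-hermitian} then yields the uniform oscillation bound, the $C^\infty$ convergence of $\tilde u$ to a smooth $\tilde u_\infty$, and the existence and uniqueness of $b$ for which $(\tilde u_\infty - \sup_M \tilde u_\infty, b)$ solves~\eqref{introduction:elliptic-equation-actual}.

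I do not expect any substantive obstacle in this argument: the corollary is essentially a quantitative observation that choosing the coefficient of $\omega^n$ large enough trivializes the pointwise domination in~\eqref{introduction:theorem-hermitian-condition}. The only mild care needed is the nonnegativity of the mixed wedge products and the independence of the cone condition from $b_n$, both of which are evident. The rest is delegated to Theorem~\ref{introduction:theorem-hermitian}.
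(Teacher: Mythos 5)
Your proof is correct and is essentially the argument the paper intends: the corollary is stated as an immediate consequence of Theorem~\ref{introduction:theorem-hermitian}, and the only content is that enlarging $c_n = b_n$ (which does not appear in the cone condition) forces the pointwise domination~\eqref{introduction:theorem-hermitian-condition} once $b_n$ exceeds $\sup_M \chi^n/(\psi\,\omega^n)$, exactly as you argue. The observation that the mixed terms $\chi^{n-\alpha}\wedge\omega^\alpha$ are nonnegative and can be discarded is the right (and only) point of care.
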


Should we have more knowledge of the manifold, it would be possible to obtain stronger results. When $\chi$ and $\omega$ are both K\"ahler, we have the following result.
\begin{theorem}
\label{introduction:theorem-kahler}
Let $(M,\omega)$ be a closed K\"ahler manifold of complex dimension $n$ and $\chi$ also a K\"ahler metric. Suppose that the cone condition~\eqref{introduction:cone-condition} holds true and $\psi \geq c$ for all $x\in M$, where $c$ is defined in \eqref{introduction:definition-invariant-constant}. 
Then there exists a uniform constant $C$ such that for all time $t \geq 0$,
\begin{equation}
	\sup_{x \in M} u (x,t) - \inf_{x\in M} u(x,t) < C.
\end{equation}
Consequently, $\tilde u$ is $C^\infty$ convergent to a smooth function $\tilde u_\infty$. Moreover, there is a unique real number $b$ such that the pair $(\tilde u_\infty - \sup_M \tilde u_\infty,b)$ solves equation~\eqref{introduction:elliptic-equation-actual}.

\end{theorem}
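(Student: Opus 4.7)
Given the sharp $C^2$ estimate of Theorem~\ref{introduction:theorem-C2} and the standard Evans--Krylov/Schauder machinery, the essential content of Theorem~\ref{introduction:theorem-kahler} is the uniform oscillation bound $\sup_M u(\cdot,t)-\inf_M u(\cdot,t)<C$. I would organize the proof in three stages: \emph{a priori} control of $u_t$, the oscillation bound, and convergence.

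\emph{Stage 1 (control of $u_t$).} Differentiating~\eqref{introduction:parabolic-flow-equation} in $t$ shows that $u_t$ satisfies a linear parabolic equation linearized at $\chi_u(t)$, so by the parabolic maximum principle $\sup_M u_t(\cdot,t)$ is nonincreasing and $\inf_M u_t(\cdot,t)$ is nondecreasing; both monotone limits exist and $\|u_t\|_{L^\infty(M)}$ is uniformly bounded in $t$. The K\"ahler hypothesis now enters: since $[\chi_u]=[\chi]$ in de Rham cohomology and $\omega$ is closed, $\int_M\chi_u^{n-\a}\wedge\omega^\a=\int_M\chi^{n-\a}\wedge\omega^\a$ for every $\a$. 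Integrating the equivalent form $\chi_u^n=e^{u_t}\psi\sum b_\a\chi_u^{n-\a}\wedge\omega^\a$ of the flow over $M$ and using the definition~\eqref{introduction:definition-invariant-constant} of $c$ yields
\begin{equation*}
	\int_M\Big(\frac{e^{u_t}\psi}{c}-1\Big)\sum_\a b_\a\chi_u^{n-\a}\wedge\omega^\a=0\qquad\text{for every }t\geq 0.
\end{equation*}
Since $\psi\geq c$ pointwise, this identity forces $\inf_M u_t(\cdot,t)\leq 0$ throughout the evolution; combined with monotonicity, $\lim_{t\to\infty}\inf_M u_t$ exists and lies in $(-\infty,0]$.

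\emph{Stage 2 (oscillation bound).} This is the main obstacle. I would run a Moser iteration on the normalized potential $\tilde u$ after rewriting the flow in the form
\begin{equation*}
	\chi_u^n-c\sum_\a b_\a\chi_u^{n-\a}\wedge\omega^\a=(e^{u_t}\psi-c)\sum_\a b_\a\chi_u^{n-\a}\wedge\omega^\a.
\end{equation*}
In the K\"ahler case Stokes's theorem incurs no torsion error, so testing against $|\tilde u|^{p-2}\tilde u$ and telescoping $\chi_u^n-\chi^n$ via $\chi_u-\chi=\tfrac{\sqrt{-1}}{2}\p\bpartial u$ produces a positive quadratic form in $\p|\tilde u|^{p/2}$ on the left, while the right-hand side is controlled by the uniform $L^\infty$ bound on $u_t$ from Stage 1. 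Iterating $p\to\infty$ starting from the normalization $\int_M\tilde u\,\omega^n=0$ and the Sobolev inequality on $(M,\omega)$ yields $\|\tilde u\|_{L^\infty(M)}\leq C$, equivalent to the desired oscillation bound. The cone condition~\eqref{introduction:cone-condition} is what keeps the mixed cross-terms positive in the presence of several coefficients $b_\a$; this plays the role that the pointwise inequality~\eqref{introduction:theorem-hermitian-condition} played in Theorem~\ref{introduction:theorem-hermitian}.

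\emph{Stage 3 (convergence).} Once $\mathrm{osc}_M u(\cdot,t)$ is uniformly bounded, Theorem~\ref{introduction:theorem-C2} gives a uniform $C^2$ bound and parabolic Evans--Krylov plus Schauder theory upgrade $\tilde u(\cdot,t)$ to uniform $C^\infty$ bounds. A parabolic Harnack inequality applied to the linear equation for $u_t$ from Stage 1 forces $\mathrm{osc}_M u_t(\cdot,t)\to 0$ as $t\to\infty$, so $u_t$ converges uniformly to a constant $b$ and $\tilde u$ converges in $C^\infty$ to a smooth limit $\tilde u_\infty$. The pair $(\tilde u_\infty-\sup_M\tilde u_\infty,b)$ then solves~\eqref{introduction:elliptic-equation-actual}, and applying the cohomological identity of Stage 1 to the limit pins $b$ down uniquely as $b=\log\big(\int_M\chi^n\big)-\log\big(\int_M\psi\sum_\a b_\a\chi^{n-\a}\wedge\omega^\a\big)$.
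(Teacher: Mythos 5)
Your Stage 1 and Stage 3 are fine (the monotonicity of $\sup_M u_t$ and $\inf_M u_t$, the cohomological integral identity, and the Cao--Harnack convergence argument all appear in the paper in essentially the form you describe). The genuine gap is Stage 2. The Moser iteration you propose does not close: after telescoping $\chi_u^n-\chi^n=\tfrac{\sqrt{-1}}{2}\p\bpartial u\wedge\sum_i\chi_u^i\wedge\chi^{n-1-i}$ and integrating by parts against $|\tilde u|^{p-2}\tilde u$, the term $\chi_u^n$ does give a positive form in $\p|\tilde u|^{p/2}$, but the term $-c\sum_\a b_\a\chi_u^{n-\a}\wedge\omega^\a$ on the left gives a form of the \emph{opposite} sign, so what you obtain is a difference of two positive quadratic forms with no sign control; moreover the right-hand side $(e^{u_t}\psi-c)\sum_\a b_\a\chi_u^{n-\a}\wedge\omega^\a$ involves the unknown metric $\chi_u$ and is not dominated by a fixed volume form. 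The cone condition~\eqref{introduction:cone-condition} is a pointwise inequality at one reference metric $\chi_v$ and does not make this integral form positive along the flow. The paper explicitly notes that a direct uniform estimate for the parabolic flow is ``not available so far,'' which is precisely the step you are attempting.

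What the paper does instead: it introduces the functional $J(u)=\sum_\a b_\a J_\a(u)$ and shows via Jensen's inequality that $\psi\geq c$ forces $\tfrac{d}{dt}J(u)\leq 0$ along the flow (this is where the hypothesis $\psi\geq c$ actually enters, not through $\inf_M u_t\leq 0$). Setting $\hat u=u-J(u)/\sum_\a b_\a\int_M\chi^{n-\a}\wedge\omega^\a$ gives $J(\hat u)=0$ for all $t$, and a Yau/Weinkove-type integral argument (Lemma~\ref{long:lemma-kahler}) yields $0\leq\sup_M\hat u\leq -C_1\inf_M\hat u+C_2$. The oscillation bound is then proved by contradiction: if $\inf_M\hat u(\cdot,t_i)\to-\infty$ along times realizing the running infimum, the monotonicity of $J$ shows the same holds for $u$, so the sharp estimate $w\leq Ce^{A(u-\inf_{M\times[0,t_i]}u)}=Ce^{A(u-\inf_Mu(\cdot,t_i))}$ of Theorem~\ref{C2:theorem-C2-estimate} applies, and the Tosatti--Weinkove second-order-to-zeroth-order argument of \cite{TWv10a} converts this exponential $C^2$ bound into $\sup_Mu(\cdot,t_i)-\inf_Mu(\cdot,t_i)\leq C$, a contradiction. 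To repair your proof you would need to replace Stage 2 by this functional-plus-contradiction scheme (or supply a genuinely new direct argument, which would be a result beyond what the paper claims).
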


It is worth a mention that we only require the concavity of the elliptic part in several key steps. So our argument can be applied to some other parabolic flows for generalized complex Monge-Amp\`ere type equations. Fang and Lai~\cite{FL13} discussed different parabolic flows for complex Monge-Amp\`ere type equations. Collins and Sz\'ekelyhidi~\cite{CollinsSzekelyhidi2014a} used the equation
\begin{equation}
	\frac{\p u }{\p t} = - \frac{ \sum^n_{\a = 1} b_\a \chi^{n - \a}_u \wedge \omega^\a }{\chi^n_u}  .
\end{equation}
For simplicity, we shall consider an equivalent form analogous to $J$-flow when $\chi$ and $\omega$ are K\"ahler. We may call it generalized $J$-flow.
\begin{equation}
\label{introduction:parabolic-flow-equation-J}
	\frac{\p u }{\p t} = \frac{1}{c} - \frac{ \sum^n_{\a = 1} b_\a \chi^{n - \a}_u \wedge \omega^\a }{\chi^n_u}  .
\end{equation}
\begin{theorem}
\label{introduction:theorem-kahler-J}
Let $(M,\omega)$ be a closed K\"ahler manifold of complex dimension $n$ and $\chi$ also a K\"ahler metric. Suppose that the cone condition~\eqref{introduction:cone-condition} holds true for $\psi = c$. 
Then there exists a long time solution $u$ to equation~\eqref{introduction:parabolic-flow-equation-J}.
Moreover, $u$ is $C^\infty$ convergent to a smooth function $u_\infty$. Moreover, $ u_\infty - \sup_M u_\infty$ solves 
\begin{equation}
	\left\{
	\begin{aligned}
		& \chi^n_u = c \sum^n_{\a = 1} b_\a \chi^{n - \a}_u \wedge \omega^\a, \\
		& \chi_u > 0,\qquad \sup_M u = 0 .
	\end{aligned}
	\right.
\end{equation}
\end{theorem}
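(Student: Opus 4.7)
My plan is to run the classical $J$-flow strategy (Song--Weinkove, Collins--Sz\'ekelyhidi) adapted to the more general volume form $\sigma(u):=\sum_\a b_\a\chi_u^{n-\a}\wedge\omega^\a$, borrowing the $C^2$ machinery of Theorem~\ref{introduction:theorem-C2} and exploiting the K\"ahler hypothesis for cohomological rigidity. First, differentiating the flow in $t$ shows $Q:=\p_t u$ satisfies a linear parabolic equation $\p_t Q = LQ$ with no zero-order term, so the maximum principle gives $\|\p_t u\|_{L^\infty}\le \|\p_t u|_{t=0}\|_{L^\infty}$ uniformly in $t$. Since only the concavity of the elliptic part enters the key $C^2$ steps of Theorem~\ref{introduction:theorem-C2}, and $A\mapsto -\sigma(A)/\pi(A)$ is concave in the relevant variables, that $C^2$ estimate transfers (with the K\"ahler setting further removing torsion terms). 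The oscillation bound $\operatorname{osc}_M u(\cdot,t)\le C$ follows from the cone condition via parabolic Moser iteration, and Evans--Krylov plus Schauder theory then yield uniform $C^\infty$ bounds and long-time existence.

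On the space of K\"ahler potentials I would introduce the energy
\begin{equation*}
\mathcal{E}(u):=\int_0^1\!\int_M \dot u_s\Bigl(\sigma(u_s)-\tfrac{1}{c}\pi(u_s)\Bigr)ds,\qquad \pi(u):=\chi_u^n,
\end{equation*}
defined along any smooth path $\{u_s\}_{s\in[0,1]}$ from $0$ to $u$. K\"ahler integration-by-parts makes the underlying $1$-form $\alpha(u)(\delta u)=\int_M\delta u\,(\sigma(u)-\pi(u)/c)$ closed, and the cohomological identity $\int_M\pi(u)=c\int_M\sigma(u)$ (the very definition of $c$) ensures it annihilates constant variations, so $\mathcal{E}$ is well defined. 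A direct computation using $\p_t u=1/c-\sigma/\pi$ yields
\begin{equation*}
\frac{d}{dt}\mathcal{E}(u(\cdot,t))=-\int_M(\p_t u)^2\,\chi_u^n\le 0,
\end{equation*}
and combined with the uniform $C^0$ bound, $\mathcal{E}$ is bounded below, giving $\int_0^\infty\!\int_M(\p_t u)^2\chi_u^n\,dt<\infty$.

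The uniform $C^k$ bounds on $\p_t u$ and the $L^2$-integrability in $t$ force $\p_t u\to 0$ in every $C^k$ along a sequence. To upgrade this to genuine convergence of $u$ itself I would return to the linear parabolic equation for $Q=\p_t u$: the cone condition makes $L$ uniformly elliptic, and a spectral-gap/Harnack argument at the linearization (in the spirit of Phong--Sturm) yields exponential decay $\|\p_t u\|_{C^k}\le C_k e^{-\delta t}$. Integrating in $t$, $u(\cdot,t)$ is Cauchy in every $C^k$ norm and converges to a smooth $u_\infty$; passing to the limit in the flow produces $\chi_{u_\infty}^n=c\,\sigma(u_\infty)$, so that $u_\infty-\sup_M u_\infty$ solves the stated elliptic equation.

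The hardest step is the uniform $C^0$ bound on $u$ itself, not merely its oscillation: as the author stresses, no direct uniform estimate for such parabolic flows is available. One must combine parabolic Moser iteration under the cone condition with the boundedness of $\mathcal{E}$, and crucially use that $\psi\equiv c$ forces $\int_M(\p_t u)\,\chi_u^n\equiv 0$ — this cohomological normalization is what prevents the additive constant in $u$ from drifting off to infinity along the flow.
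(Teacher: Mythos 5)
Your overall architecture matches the paper's: maximum principle on $\p_t u$, a concavity-based second order estimate of the form $w\le Ce^{A(u-\inf u)}$ (which in the K\"ahler case the paper proves with the simple test function $\phi=-A(u-v)$, no Phong--Sturm square needed), the cohomological identity $\int_M \p_t u\,\chi_u^n\equiv 0$ as the mechanism that pins down the additive constant, a Harnack argument \`a la Cao/Gill for exponential decay of $\p_t u$, and Evans--Krylov plus Schauder. Your energy $\mathcal{E}$ is, up to normalization, the paper's $\hat J_\chi$ from Section~\ref{lower}; the paper treats it only as a corollary (lower bound and properness) and does not need the $L^2_t$ bound on $\p_t u$ for convergence, since the Harnack inequality applied to $\varphi=\p_t u$ on unit time intervals already gives $\operatorname{osc}_M\p_t u\le Ce^{-c_0t}$, and then $\int_M\p_t u\,\chi_u^n=0$ converts this into $|\p_t u|\le Ce^{-c_0t}$ and hence Cauchy convergence of $u$ itself.

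The genuine gap is the oscillation bound. You assert that $\operatorname{osc}_M u(\cdot,t)\le C$ ``follows from the cone condition via parabolic Moser iteration,'' but a direct uniform estimate of this kind for the parabolic flow is exactly what the paper says is \emph{not} available (in contrast to the elliptic case). The actual argument is indirect and has two ingredients you never assemble: (i) since $\frac{d}{dt}I(u(t))=\int_M\p_t u\,\chi_u^n=0$, the functional $I(u(t))=\frac{1}{n+1}\sum_i\int_M u\,\chi_u^i\wedge\chi^{n-i}$ vanishes identically, and Weinkove's inequality then gives $0\le\sup_M u\le C_1-C_2\inf_M u$, so it suffices to bound $\inf_M u$ from below; (ii) arguing by contradiction, one picks times $t_i$ at which the running infimum $\inf_{M\times[0,t_i]}u$ is attained and tends to $-\infty$, so that the sharp estimate reads $w(x,t_i)\le Ce^{A(u(x,t_i)-\inf_M u(\cdot,t_i))}$; feeding this exponential-type second order bound into the Cherrier/Tosatti--Weinkove iteration yields $\operatorname{osc}_M u(\cdot,t_i)\le C$, contradicting (i). Moser iteration enters only at this last stage, applied to the consequence of the $C^2$ estimate, not directly ``under the cone condition.'' (A smaller point: you call the full $C^0$ bound harder than the oscillation bound, but given $I(u(t))\equiv 0$ one has $\inf_M u\le 0\le\sup_M u$, so the full bound is immediate from the oscillation bound; the oscillation bound is the hard step.) Your ``spectral-gap'' phrasing for the exponential decay of $\p_t u$ is also vaguer than needed; the concrete tool is the Li--Yau type Harnack inequality for the linearized operator, applied to $\sup_M\varphi(\cdot,m-1)-\varphi$ and $\varphi-\inf_M\varphi(\cdot,m-1)$ on unit time intervals.
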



The paper is organized as follows. 
In Section~\ref{preliminary}, we state some preliminary knowledge. 
In Section~\ref{C2}, we generalize the $C^2$ estimate of the author~\cite{Sun2013p} and prove Theorem~\ref{introduction:theorem-C2}. 
In Section~\ref{long},  time-independent estimates are derived from the $C^2$ estimate in Section~\ref{C2}. We prove the time-independent estimates for Hermitian manifolds and K\"ahler manifolds respectively. 
In Section~\ref{gradient}, we give a new direct argument for gradient estimates. 
In Section~\ref{convergence}, we briefly review the convergence of the normalized solution, and finish the proofs of Theorem~\ref{introduction:theorem-hermitian} and Theorem~\ref{introduction:theorem-kahler}. 
In Section~\ref{J}, we study another parabolic flow used by  Collins and Sz\'ekelyhidi~\cite{CollinsSzekelyhidi2014a} and prove 
Theorem~\ref{introduction:theorem-kahler-J}.
We also discuss the lower bound and properness of the more general $J$-functional in the previous sections. It is pointed out by Gabor Sz\'ekelyhidi that this is used in their paper~\cite{CollinsSzekelyhidi2014a}.

\bigskip

\section{Preliminary}
\label{preliminary}
\setcounter{equation}{0}
\medskip

\subsection{Notations}

We denote by $\nabla$ the Chern connection of $g$. 
As in \cite{GSun12}, we express
\begin{equation}
\label{int:definition-X}
X := \chi_u \,,
\end{equation}
and thus in local coordinates
\begin{equation}
\label{int:definition-X-coefficients}
X_{i\bar j} = \chi_{i\bar j} + \bpartial_j \p_i u\,.
\end{equation}
Also, we denote the coefficients of $X^{-1}$ by $X^{i\bar j}$.

Let $S_\a (\lambda)$ denote the $\a$-th elementary symmetric polynomial of $\lambda \in \bfR^n$,
\begin{equation}
	S_\a (\boldsymbol{\lambda}) = \sum_{1 \leq i_1 < \cdots < i_\a \leq n} \lambda_{i_1} \cdots \lambda_{i_\a} \,.
\end{equation}
For a nonsingular square matrix $A$, we define $S_\a (A) = S_\a (\boldsymbol{\lambda}(A))$ where $\boldsymbol{\lambda}(A)$ denote the eigenvalues of $A$. Further, write $S_\a (X) = S_\a (\boldsymbol{\lambda}_* (X))$ and $S_\a (X^{-1}) = S_\a (\boldsymbol{\lambda}^* (X^{-1}))$ where $\boldsymbol{\lambda}_* (A)$ and $\boldsymbol{\lambda}^* (A)$ denote the eigenvalues of a Hermitian matrix $A$ with respect to $\{g_{i\bar j}\}$ and to $\{g^{i\bar j}\}$, respectively. In this paper, we shall use $S_\alpha$ to denote $S_\alpha(X^{-1})$. In local coordinates, equation~\eqref{introduction:parabolic-flow-equation} can be written in the form
\begin{equation}
\label{introduction:parabolic-equation-equivalent}
	\frac{\p u }{\p t} = \ln \frac{S_n ( X )}{\sum^n_{\a = 1} c_\a S_{n - \a}(X)} - \ln \psi ,
\end{equation}
where
\begin{equation}
	c_\a = \frac{b_\a (n - \a )! \a !}{n!} = \frac{b_\a}{C^\a_n}.
\end{equation}

\medskip
\subsection{Concavity}

Concavity (or convexity) is an important assumption in the theory of fully nonlinear ellptic and parabolic equations, e.g. Evans-Krylov theorem. Indeed, the concavity itself will play a key role in the convergence later.

By the work of Caffarelli, Nirenberg and Spruck~\cite{CNS3}, we only need to prove that $\ln \frac{S_n ( \boldsymbol{\lambda} )}{\sum^n_{\a = 1} c_\a S_{n - \a}( \boldsymbol{\lambda} )} $ is concave with respect to $\boldsymbol{\lambda}$ instead of Hermitian matrix. For convenience, rewrite
\begin{equation}
\label{preliminary:elliptic-part-1}
	\ln \frac{S_n ( \boldsymbol{\lambda} )}{\sum^n_{\a = 1} c_\a S_{n - \a}( \boldsymbol{\lambda} )} = \ln S_n ( \boldsymbol{\lambda} ) - \ln \sum^n_{\a = 1} c_\a S_{n - \a} ( \boldsymbol{\lambda} ) .
\end{equation}
Define
\begin{equation}
\label{preliminary:elliptic-hessian-matrix-term}
	B_{ij} = \frac{\p^2}{\p\lambda_j\lambda_i}\Big(\ln S_n ( \boldsymbol{\lambda} ) - \ln \sum^n_{\a = 1} c_\a S_{n - \a} ( \boldsymbol{\lambda} ) \Big) .
\end{equation}
We need to show that the matrix $\{B_{ij}\}_{n\times n}$ is non-positive definite. 

Differentiating $\ln S_n ( \boldsymbol{\lambda} ) - \ln \sum^n_{\a = 1} c_\a S_{n - \a} ( \boldsymbol{\lambda} )$ twice,
\begin{equation}
\label{preliminary-elliptic-part-1st-derivative}
	\frac{\p}{\p \lambda_i} \Big(\ln S_n ( \boldsymbol{\lambda} ) - \ln \sum^n_{\a = 1} c_\a S_{n - \a} ( \boldsymbol{\lambda} )\Big) = \frac{S_{n - 1;i}}{S_n} - \frac{\sum^{n - 1}_{\a = 1} c_\a S_{n - \a - 1;i} ( \boldsymbol{ \lambda} ) }{ \sum^n_{\a = 1} c_\a S_{n - \a} ( \boldsymbol{ \lambda} ) } ,
\end{equation}
and
\begin{equation}
\label{preliminary-elliptic-part-2nd-derivative}
\begin{aligned}
	&\, \frac{\p^2}{\p \lambda_j\p \lambda_i} \Big(\ln S_n ( \boldsymbol{\lambda} ) - \ln \sum^n_{\a = 1} c_\a S_{n - \a} ( \boldsymbol{\lambda} )\Big) \\
	=&\, \frac{S_{n - 2;ij} ( \boldsymbol{\lambda} )}{S_n ( \boldsymbol{\lambda} )} - \frac{S_{n - 1;i} ( \boldsymbol{\lambda} ) S_{n - 1;j} ( \boldsymbol{\lambda} )}{S^2_n ( \boldsymbol{\lambda} )} - \frac{\sum^{n - 2}_{\a = 1} c_\a S_{n - \a - 2;ij} ( \boldsymbol{\lambda} )}{\sum^n_{\a = 1} c_\a S_{n - \a} ( \boldsymbol{\lambda} )} \\
	&\,\qquad + \frac{\sum^{n - 1}_{\a = 1} c_\a S_{n - \a - 1;i} ( \boldsymbol{\lambda} ) \sum^{n - 1}_{\a = 1} c_\a S_{n - \a - 1;j} ( \boldsymbol{\lambda} )}{\Big(\sum^n_{\a = 1} c_\a S_{n - \a} ( \boldsymbol{\lambda} )\Big)^2} .
\end{aligned}
\end{equation}

It is well known that $\ln S_n ( \boldsymbol{\lambda} )- \ln S_{n - \a}( \boldsymbol{\lambda} )$ is concave, and consequently
\begin{equation}
\begin{aligned}
	&\, \Bigg\{\frac{S_{n - 2;ij} ( \boldsymbol{\lambda} )}{S_n ( \boldsymbol{\lambda} )} - \frac{S_{n - 1;i} ( \boldsymbol{\lambda} ) S_{n - 1;j} ( \boldsymbol{\lambda} )}{S^2_n ( \boldsymbol{\lambda} )}\Bigg\}_{n \times n} \\
	\leq&\, \Bigg\{ \frac{S_{n - \a - 2;ij} ( \boldsymbol{\lambda} )}{S_{n - \a} ( \boldsymbol{\lambda} )} - \frac{S_{n - \a - 1;i}( \boldsymbol{\lambda} ) S_{n - \a - 1;j} ( \boldsymbol{\lambda} )}{S^2_{n - \a}( \boldsymbol{\lambda} ) }\Bigg\}_{n \times n} .
\end{aligned}
\end{equation}
When $\a = n$
\begin{equation}
	\Bigg\{\frac{S_{n - 2;ij} ( \boldsymbol{\lambda} )}{S_n ( \boldsymbol{\lambda} )} - \frac{S_{n - 1;i} ( \boldsymbol{\lambda} ) S_{n - 1;j} ( \boldsymbol{\lambda} )}{S^2_n ( \boldsymbol{\lambda} )}\Bigg\}_{n \times n} \leq \boldsymbol{0} ,
\end{equation}
and when $\a = n - 1$
\begin{equation}
	\Bigg\{\frac{S_{n - 2;ij} ( \boldsymbol{\lambda} )}{S_n ( \boldsymbol{\lambda} )} - \frac{S_{n - 1;i} ( \boldsymbol{\lambda} ) S_{n - 1;j} ( \boldsymbol{\lambda} )}{S^2_n ( \boldsymbol{\lambda} )}  \Bigg\}_{n \times n} \leq  \Bigg\{ - \frac{1}{S^2_1 ( \boldsymbol{\lambda} )} \Bigg\}_{n \times n} .
\end{equation}
Hence,
\begin{equation}
\label{preliminary-elliptic-part-jacobian-1}
\begin{aligned}
	\{ B_{ij} \}_{n \times n} 
	\leq& \Bigg\{ \sum^n_{\a = 1} \frac{c_a S_{ n - \a}}{\sum^n_{\a = 1} c_a S_{n - \a}}\Bigg( \frac{S_{n - \a - 2;ij}}{S_{n - \a}} - \frac{S_{n - \a - 1;i} S_{n - \a - 1;j}}{S^2_{n - \a}} \Bigg) \\
	&\;\; - \frac{\sum^{n - 2}_{\a = 1} c_\a S_{n - \a - 2;ij}}{\sum^n_{\a = 1} c_\a S_{n - \a}} + \frac{\sum^{n - 1}_{\a = 1} c_\a S_{n - \a - 1;i} \sum^{n - 1}_{\a = 1} c_\a S_{n - \a - 1;j}}{\Big(\sum^n_{\a = 1} c_\a S_{n - \a}\Big)^2} \Bigg\}_{n \times n} \\
	=& \Bigg\{ - \sum^n_{\a = 1} \frac{c_a S_{ n - \a}}{\sum^n_{\a = 1} c_a S_{n - \a}} \frac{S_{n - \a - 1;i} }{S_{n - \a}} \frac{ S_{n - \a - 1;j}}{S_{n - \a}}  \\
	&\;\;\; + \sum^{n - 1}_{\a = 1} \frac{c_\a S_{n - \a}}{\sum^n_{\a = 1} c_a S_{n - \a}} \frac{S_{n - \a - 1;i}}{S_{n - \a}} \sum^{n - 1}_{\a = 1} \frac{c_\a S_{n - \a}}{\sum^n_{\a = 1} c_a S_{n - \a}} \frac{S_{n - \a - 1;j}}{S_{n - \a}}\Bigg\}_{n \times n}
\end{aligned} 
\end{equation}
For any real vector
\(
	\boldsymbol{\eta}  = (\eta^1, \cdots , \eta^n) ,
\)
\begin{equation}
\label{preliminary-elliptic-part-jacobian-2}
\begin{aligned}
	\sum_{i,j} B_{ij} \eta^i \eta^j
	\leq&\, - \sum^n_{\a = 1} \frac{c_a S_{ n - \a}}{\sum^n_{\a = 1} c_a S_{n - \a}} \Bigg| \sum_i \frac{S_{n - \a - 1;i} \eta^i }{S_{n - \a}} \Bigg|^2 \\
	&\;\; + \Bigg| \sum^n_{\a = 1} \frac{c_\a S_{n - \a}}{\sum^n_{\a = 1} c_a S_{n - \a}} \sum_i \frac{S_{n - \a - 1;i} \eta^i}{S_{n - \a}} \Bigg|^2 ,
\end{aligned}
\end{equation}
where $S_{-1 ;i} = 0$ by convention. Applying H\"older inequality,
\begin{equation}
\label{preliminary-elliptic-part-jacobian-3}
	\Bigg| \sum^n_{\a = 1} \frac{c_\a S_{n - \a}}{\sum^n_{\a = 1} c_a S_{n - \a}} \sum_i \frac{S_{n - \a - 1;i} \eta^i}{S_{n - \a}} \Bigg|^2 \leq \sum^n_{\a = 1} \frac{c_\a S_{n - \a}}{\sum^n_{\a = 1} c_a S_{n - \a}} \Bigg|\sum_i \frac{S_{n - \a - 1;i} \eta^i}{S_{n - \a}} \Bigg|^2 , 
\end{equation}
and thus
\(
	\sum_{i,j} B_{ij} \eta^i \eta^j \leq 0 .
\)

\medskip
\subsection{Formulas}
We state some useful formulas at a given point $p$. Assume that at the point $p$,  $g_{i\bar j} = \delta_{ij}$ and $X_{i\bar j}$ is diagonal in a specific chart.  
In this paper, we call such local coordinates normal coordinate charts around $p$.

We recall the formula from \cite{GL10},
\begin{equation}
	\ol{X_{i\bar jk}} = X_{j\bar i\bar k},
\end{equation}
and
\begin{equation}
\label{preliminary:formula-X-1}
\begin{aligned}
     	X_{i\bar ij\bar j} - X_{j\bar ji\bar i} &= R_{j\bar ji\bar i}X_{i\bar i}  -  R_{i\bar ij\bar j}X_{j\bar j} + 2 \mathfrak{Re} \Big\{\sum_p \overline{T^p_{ij}}X_{i\bar pj}\Big\} \\
     	&\hspace{6em} - \sum_{p} T^p_{ij} \overline{T^p_{ij}} X_{p\bar p} - G_{i\bar ij\bar j},
\end{aligned}
\end{equation}
where
\begin{equation}
\label{preliminary:formula-G-coefficient}
\begin{aligned}
    	G_{i\bar ij\bar j} &= \chi_{j\bar ji\bar i} - \chi_{i\bar ij\bar j} + \sum_p R_{j\bar ji\bar p}\chi_{p\bar i} -\sum_p R_{i\bar ij\bar p}\chi_{p\bar j} \\
    	&\hspace{3em} + 2\mathfrak{Re}\Big\{\sum_p \overline{T^p_{ij}}\chi_{i\bar pj} \Big\} - \sum_{p,q}T^p_{ij}\overline{T^q_{ij}}\chi_{p\bar q}\,.
\end{aligned}
\end{equation}

For convenience, we can rewrite equation~\eqref{introduction:parabolic-flow-equation} in the form
\begin{equation}
\label{preliminary:parabplic-equation-equivalent-1}
	\p_t u 
	= - \ln \sum^n_{\a = 1} c_\a S_\a - \ln \psi ,
\end{equation}
and condition~\eqref{introduction:cone-condition} in the form
\begin{equation}
\label{preliminary:cone-condition-equivalent-1}
	\sum^{n - 1}_{\a = 1} c_\a S_{\a} \big((\chi_v|k)^{-1}\big) < \frac{1}{\psi},
\end{equation}
for all $k$, where $(\chi_v|k)$ denotes the $(k,k)$ minor matrix of $\chi_v$.

Differentiating the equation at $p$
\begin{equation}
\label{preliminary:equation-time-derivative}
	\p_t (\p_t u) = \sum^n_{\a = 1}\frac{ c_\a}{\sum^n_{\a = 1} c_\a S_\a}  \sum_i S_{\a - 1;i} (X^{i\bar i})^2 (\p_t u)_{i\bar i} ,
\end{equation}
\begin{equation}
\label{preliminary:equation-1st-derivative}
	\p_t u_l = \sum^n_{\a = 1}\frac{ c_\a}{\sum^n_{\a = 1} c_\a S_\a}  \sum_i S_{\a - 1;i} (X^{i\bar i})^2 X_{i\bar il} - \frac{\p_l \psi}{\psi} ,
\end{equation}
and
\begin{equation}
\label{preliminary:equation-2nd-derivative}
\begin{aligned}
	\p_t u_{l\bar l} =&\,  \sum^n_{\a = 1} \frac{c_\a}{\sum^n_{\a = 1} c_\a S_\a} \Bigg[ \sum_i S_{\a - 1;i} (X^{i\bar i})^2 X_{i\bar il\bar l} - \sum_{i,j} S_{\a - 1;i} (X^{i\bar i})^2 X^{j\bar j} X_{j\bar i\bar l} X_{i\bar jl} \\
	&\,\hspace{12em} - \sum_{i,j} S_{\a - 1;i} (X^{i\bar i})^2 X^{j\bar j} X_{j\bar il} X_{i\bar j\bar l} \\
	&\,\hspace{12em} + \sum_{i\neq j} S_{\a -2;ij} (X^{i\bar i})^2 (X^{j\bar j})^2 X_{j\bar il} X_{i\bar j\bar l} \\
	&\,\hspace{12em} - \sum_{i\neq j} S_{\a - 2;ij} (X^{i\bar i})^2 (X^{j\bar j})^2 X_{j\bar jl} X_{i\bar i\bar l}\Bigg] \\
	&\, + \Bigg|\sum^n_{\a = 1}\frac{ c_\a}{\sum^n_{\a = 1} c_\a S_\a} \sum_i S_{\a - 1;i} (X^{i\bar i})^2 X_{i\bar il} \Bigg|^2 - \frac{\bpartial_l\p_l \psi}{\psi} + \frac{\p_l \psi \bpartial_l \psi}{\psi} .
\end{aligned}
\end{equation}
Recall that the fundamental polynomials have strong concavity in $\Gamma_n$, which was shown in \cite{GLZ, FLM11}.
\begin{theorem}
\label{preliminary:theorem-concavity}
    For $\lambda\in \Gamma_n$, $\xi = (\xi_1,\cdots\xi_n) \in \bfC^n$, We have
    \begin{equation}
        \sum^n_{i=1} \frac{S_{\a -1;i}(\lambda)}{\lambda_i} \xi_i\bar\xi_i + \sum_{i,j} S_{\a -2;ij}(\lambda) \xi_i\bar\xi_j \geq \sum_{i,j} \frac{S_{\a -1;i}(\lambda)S_{\a -1;j}(\lambda)}{S_\a (\lambda)}\xi_i\bar\xi_j \geq 0 \,.
    \end{equation}
\end{theorem}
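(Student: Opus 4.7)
The right-hand inequality is immediate, since $\lambda\in\Gamma_n$ forces $S_\alpha(\lambda)>0$ and
\[
\sum_{i,j}\frac{S_{\alpha-1;i}(\lambda)S_{\alpha-1;j}(\lambda)}{S_\alpha(\lambda)}\xi_i\bar\xi_j \;=\; \frac{1}{S_\alpha(\lambda)}\Big|\sum_i S_{\alpha-1;i}(\lambda)\xi_i\Big|^2 \;\geq\; 0.
\]
The heart of the theorem is the left-hand inequality, and my plan is to display the difference
\[
D(\xi) \;:=\; \sum_i\frac{S_{\alpha-1;i}}{\lambda_i}|\xi_i|^2 + \sum_{i\neq j}S_{\alpha-2;ij}\,\xi_i\bar\xi_j - \frac{1}{S_\alpha}\Big|\sum_i S_{\alpha-1;i}\xi_i\Big|^2
\]
as a manifestly nonnegative Hermitian quadratic form in $\xi$.

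The trial case $\alpha=1$, where a direct computation gives $D(\xi)=\sum_{i<j}\big|\sqrt{\lambda_j/\lambda_i}\,\xi_i - \sqrt{\lambda_i/\lambda_j}\,\xi_j\big|^2$, suggests the ansatz
\[
S_\alpha\cdot D(\xi) \;=\; \sum_{i<j}\frac{C_{ij}(\lambda)}{\lambda_i\lambda_j}\big|\lambda_j\xi_i - \lambda_i\xi_j\big|^2,\qquad C_{ij} \;:=\; S_{\alpha-1;ij}^2 - S_{\alpha;ij}\,S_{\alpha-2;ij},
\]
where $S_{\beta;ij}$ denotes the $\beta$-th elementary symmetric polynomial in the $n-2$ variables $\{\lambda_k:k\neq i,j\}$. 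Newton's inequality applied in those $n-2$ variables at index $\alpha-1$ yields $C_{ij}\geq 0$, so establishing the ansatz will finish the proof. I would verify the ansatz by matching the coefficients of $\xi_i\bar\xi_j$ and of $|\xi_i|^2$ separately.

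The off-diagonal matching reduces to the identity $S_\alpha S_{\alpha-2;ij} - S_{\alpha-1;i}S_{\alpha-1;j} = -C_{ij}$, which follows immediately from the two-variable splittings $S_\alpha = S_{\alpha;ij}+(\lambda_i+\lambda_j)S_{\alpha-1;ij}+\lambda_i\lambda_j S_{\alpha-2;ij}$ and $S_{\alpha-1;i}=S_{\alpha-1;ij}+\lambda_j S_{\alpha-2;ij}$ (together with the symmetric formula for $j$) after expansion. The diagonal matching reduces, upon fixing $i$ and relabelling $\{\lambda_k:k\neq i\}$ as $N:=n-1$ fresh variables $\mu$, to the universal identity
\[
\sum_j\mu_j\bigl(S_{\alpha-1;j}(\mu)^2 - S_{\alpha;j}(\mu)\,S_{\alpha-2;j}(\mu)\bigr) \;=\; S_\alpha(\mu)\,S_{\alpha-1}(\mu),
\]
which I would prove by rewriting $\mu_j S_{\alpha-1;j}=S_\alpha - S_{\alpha;j}$ and $\mu_j S_{\alpha-2;j}=S_{\alpha-1}-S_{\alpha-1;j}$, observing that the cross-term $\sum_j S_{\alpha-1;j}S_{\alpha;j}$ cancels, and closing with the Vieta-type relation $\sum_j S_{k;j}=(N-k)S_k$. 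The principal obstacle is conjecturing the correct sum-of-squares structure in the first place; once the ansatz above is in hand, both verifications are purely algebraic, and the only nontrivial analytic input is Newton's inequality for elementary symmetric polynomials in the positive orthant.
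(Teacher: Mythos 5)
Your argument is correct, and it is worth noting that the paper itself does not prove Theorem~\ref{preliminary:theorem-concavity} at all: it simply quotes the result from the references [GLZ, FLM11], where the standard route is via concavity properties of the quotients $S_\a/S_{\a-1}$ (G\r{a}rding-type arguments and induction on $\a$). What you supply instead is a self-contained, purely algebraic proof: the explicit sum-of-squares identity
\begin{equation*}
S_\a\,D(\xi)\;=\;\sum_{i<j}\frac{S_{\a-1;ij}^2-S_{\a;ij}S_{\a-2;ij}}{\lambda_i\lambda_j}\,\bigl|\lambda_j\xi_i-\lambda_i\xi_j\bigr|^2,
\end{equation*}
reduces everything to Newton's inequality for the $n-2$ variables $\{\lambda_k:k\neq i,j\}$. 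I checked both coefficient matchings: the off-diagonal one is exactly the two-variable splitting computation you describe, and the diagonal one reduces, as you say, to $\sum_j\mu_j\bigl(S_{\a-1;j}(\mu)^2-S_{\a;j}(\mu)S_{\a-2;j}(\mu)\bigr)=S_\a(\mu)S_{\a-1}(\mu)$, which follows from the cancellation of the cross-term and $\sum_jS_{k;j}=(N-k)S_k$. The identity also degenerates correctly at the endpoints ($C_{ij}=1$ for $\a=1$, $C_{ij}=0$ for $\a=n$, where $D\equiv 0$). Two small points: your displayed trial case should read $S_1\,D(\xi)=\sum_{i<j}\bigl|\sqrt{\lambda_j/\lambda_i}\,\xi_i-\sqrt{\lambda_i/\lambda_j}\,\xi_j\bigr|^2$ (the factor $S_1$ is missing on the left, though the general ansatz is stated correctly); and you should state explicitly the convention that $\sum_{i,j}S_{\a-2;ij}\xi_i\bar\xi_j$ means the off-diagonal sum (equivalently $S_{\a-2;ii}:=0$), which is how the theorem is applied in passing from \eqref{preliminary:equation-2nd-derivative} to \eqref{preliminary:equation-2nd-derivative-1}. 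With those clarifications your proof is complete and arguably more elementary than the cited ones.
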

So we can control some terms in the right of \eqref{preliminary:equation-2nd-derivative},
\begin{equation}
\label{preliminary:equation-2nd-derivative-1}
\begin{aligned}
	\p_t X_{l\bar l} 
	&\leq 
	\sum^n_{\a = 1} \frac{c_\a}{\sum^n_{\a = 1} c_\a S_\a} \Bigg[ \sum_i S_{\a - 1;i} (X^{i\bar i})^2 X_{i\bar il\bar l} - \sum_{i,j} S_{\a - 1;i} (X^{i\bar i})^2 X^{j\bar j} X_{j\bar i\bar l} X_{i\bar jl} \Bigg] \\
	&\hspace{5em} - \sum^n_{\a = 1} \frac{c_\a S_\a}{\sum^n_{\a = 1} c_\a S_\a} \Big|\p_l (\ln S_\a) \Big|^2 + \Bigg|\sum^n_{\a = 1} \frac{c_\a S_\a}{\sum^n_{\a = 1} c_\a S_\a} \p_l (\ln S_\a) \Bigg|^2 \\
	&\hspace{12em} - \frac{\bpartial_l\p_l \psi}{\psi} + \frac{\p_l \psi \bpartial_l \psi}{\psi} . 
\end{aligned}
\end{equation}
Applying H\"older inequality to \eqref{preliminary:equation-2nd-derivative-1}, and then summing it over $l$,
\begin{equation}
\label{preliminary:equation-2nd-derivative-2}
\begin{aligned}
	\p_t w \leq C_1 + \sum^n_{\a = 1} \frac{c_\a}{\sum^n_{\a = 1} c_\a S_\a} \Bigg[ &\sum_{i,l} S_{\a - 1;i} (X^{i\bar i})^2 X_{i\bar il\bar l} \\
	&\; - \sum_{i,j,l} S_{\a - 1;i} (X^{i\bar i})^2 X^{j\bar j} X_{j\bar i\bar l} X_{i\bar jl} \Bigg] .
\end{aligned}
\end{equation}

Since $v \in C^2(M)$ and $\chi_v > 0$, we may assume 
\begin{equation}
	\epsilon \omega \leq \chi_v \leq \epsilon^{-1} \omega
\end{equation}
for some $\epsilon > 0$.

Morevoer, by the maximum principle, $\p_t u$ attains its extremal values at $t = 0$,
\begin{equation}
\label{preliminary:maximum-1}
	\inf_{M \times \{0\} } \p_t u\leq \p_t u \leq \sup_{M \times \{0\}} \p_t u .
\end{equation}
It follows immediately that
\begin{equation}
\label{preliminary:maximum-2}
	|\p_t \tilde u| \leq \sup_{M \times \{0\}} \p_t u - \inf_{M \times \{0\}} \p_t u ,
\end{equation}
and 
\begin{equation}
\label{preliminary:maximum-3}
	 \inf_{M } \frac{S_n ( \chi )}{\psi\sum^n_{\a = 1} c_\a S_{n - \a}(\chi)} \leq  \frac{S_n ( X )}{ \psi\sum^n_{\a = 1} c_\a S_{n - \a}(X)} \leq \sup_{M }  \frac{S_n ( \chi )}{\psi\sum^n_{\a = 1} c_\a S_{n - \a}(\chi)}  .
\end{equation}
Therefore, the flow remains Hermitian at any time.

\bigskip

\section{The second order estimate}
\label{C2}
\setcounter{equation}{0}
\medskip

In this section we derive the partial second order estimate for admissible solutions.

In order to prove the second order estimate, we need a key inequality. Guan~\cite{Guan2014a}, Collins and Sz\'ekelyhidi~\cite{CollinsSzekelyhidi2014a} and Sz\'ekelyhidi~\cite{Szekelyhidi2014b} have more general statements for elliptic equations.

\begin{lemma}
\label{lemma-inequality}
There are constants $N , \theta > 0$ such that when $w \geq N$ at a point $p$,
\begin{equation}
\begin{aligned}
	&\, \sum^n_{\a = 1} \frac{c_\a}{\sum^n_{\a = 1} c_\a S_\a} \sum_i S_{\a - 1;i} (X^{i\bar i})^2 (u_{i\bar i} - v_{i\bar i}) \\
	\leq&\, - \ln \Big(\psi\sum^n_{\a = 1} c_\a S_\a \Big) - \theta - \theta \sum^n_{\a = 1} \frac{c_\a}{\sum^n_{\a = 1} c_\a S_\a} \sum_i S_{\a - 1;i} (X^{i\bar i})^2 .
\end{aligned}
\end{equation}
\end{lemma}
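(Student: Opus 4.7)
The strategy is to combine the concavity of the operator $F(X) = -\ln\bigl(\sum_{\a=1}^n c_\a S_\a(X^{-1})\bigr)$, established in the preliminary section, with the strict cone condition \eqref{preliminary:cone-condition-equivalent-1} and the \emph{a priori} bound on $\p_t u$ from \eqref{preliminary:maximum-1}. I work in normal coordinates at $p$ with $g_{i\bar j} = \delta_{ij}$ and $X$ diagonal, ordered so that $\lambda_1 \geq \cdots \geq \lambda_n > 0$; since $w$ essentially controls $\mathrm{tr}_g X$, the hypothesis $w \geq N$ forces $\lambda_1$ to be as large as we please. Writing $\Phi := \sum_i F^{i\bar i}$ and using the identity $\sum_i F^{i\bar i} X_{i\bar i} = \sum_\a \a c_\a S_\a/\sum_\beta c_\beta S_\beta \leq n$, the claim reduces to showing
\[
\sum_i F^{i\bar i}(\chi_v)_{i\bar i} \geq -\ln\Bigl(\psi \sum_\a c_\a S_\a\Bigr) + \theta + \theta \Phi + O(1),
\]
with $O(1)$ depending only on fixed geometric data and $\sup|\p_t u|$.

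The main step is a reduction to the $(1,1)$-minor $(X|1)$. Splitting $S_\a(X^{-1}) = X^{1\bar 1}\,S_{\a-1}((X|1)^{-1}) + S_\a((X|1)^{-1})$ and using $X^{1\bar 1} = 1/\lambda_1 \to 0$, one obtains $\sum_\a c_\a S_\a(X^{-1}) = \sum_{\a=1}^{n-1} c_\a S_\a((X|1)^{-1}) + O(\lambda_1^{-1})$, together with $F^{i\bar i} = \tilde F^{i\bar i}(X|1) + O(\lambda_1^{-1})$ for $i \neq 1$ and $F^{1\bar 1} = O(\lambda_1^{-2})$, where
\[
\tilde F(Y) := -\ln\Bigl(\sum_{\a=1}^{n-1} c_\a S_\a(Y^{-1})\Bigr)
\]
is the reduced operator on $(n-1)\times(n-1)$ Hermitian matrices, itself concave by the same argument used for $F$. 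By compactness of $M$ and the strict inequality \eqref{preliminary:cone-condition-equivalent-1}, there is a uniform $\theta_0 > 0$ with $\psi \sum_{\a=1}^{n-1} c_\a S_\a((\chi_v|1)^{-1}) \leq 1 - 3\theta_0$, which is precisely $\tilde F(\chi_v|1) \geq \ln\psi - \ln(1 - 3\theta_0)$. Applying concavity of $\tilde F$ to the pair $(X|1)$ and $(\chi_v|1)$ yields
\[
\sum_{i\neq 1} \tilde F^{i\bar i}\bigl(X_{i\bar i} - (\chi_v)_{i\bar i}\bigr) \leq \tilde F(X|1) - \tilde F(\chi_v|1) \leq -\ln\Bigl(\psi\sum_\a c_\a S_\a\Bigr) + \ln(1 - 3\theta_0) + o(1),
\]
and after substituting the error estimates and absorbing the $i = 1$ contributions, the claim follows with slack of size comparable to $\theta_0 > 0$, provided $N$ is chosen sufficiently large that the $O(\lambda_1^{-1})$ and $o(1)$ terms are dominated.

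The main obstacle is to keep the right-hand-side slack $\theta(1+\Phi)$ dominated by the gap $-\ln(1 - 3\theta_0) \approx 3\theta_0$ supplied by the cone condition, which in turn requires $\Phi$ to be uniformly bounded in the regime under consideration. This follows from the maximum-principle bound: $|\p_t u| \leq C$ keeps $\sum_\a c_\a S_\a(X^{-1})$ bounded between positive constants, so at least one eigenvalue $\lambda_j$ must remain bounded from above, and hence $\Phi$ is controlled in terms of $S_1(X^{-1}) = \sum_i 1/\lambda_i$, which is then uniformly bounded. When several eigenvalues of $X$ blow up simultaneously the same reduction is iterated on a further minor $(X|1,\dots,j)$ using the cone condition at the relevant index; since the cone condition is assumed at every $k$, the estimate carries through after appropriate re-ordering. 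Choosing $\theta$ small enough relative to $\theta_0$ and the uniform bound on $\Phi$ then closes the estimate.
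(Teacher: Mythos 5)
There is a genuine gap, and it sits exactly where your argument leans hardest: the claim that $\Phi = \sum_i F^{i\bar i}$ is uniformly bounded ``in the regime under consideration.'' You justify this by saying that the two-sided bound on $\sum_\a c_\a S_\a(X^{-1})$ from \eqref{preliminary:maximum-3} forces $S_1(X^{-1})=\sum_i 1/\lambda_i$ to be uniformly bounded. That is false in general. The hypotheses only require $\sum_\a b_\a>0$, so for instance in the Monge--Amp\`ere case ($c_n>0$, all other $c_\a=0$) the bound controls only $\det X$, which is perfectly compatible with $\lambda_n\to 0$ and $\lambda_1\to\infty$; then $S_1(X^{-1})\to\infty$ and $\Phi\to\infty$. (Your intermediate remark that ``at least one eigenvalue must remain bounded from above'' gives a lower bound on $S_1(X^{-1})$, not an upper bound; to bound $S_1(X^{-1})$ from above you need \emph{all} eigenvalues bounded away from zero, which nothing in the setup guarantees.) Since your plan is to absorb the slack $\theta(1+\Phi)$ into the fixed gap $-\ln(1-3\theta_0)$ coming from the cone condition, the whole argument collapses precisely when $\Phi$ is large. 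Note that the lemma's conclusion carries the term $-\theta\sum_\a\frac{c_\a}{\sum c_\a S_\a}\sum_i S_{\a-1;i}(X^{i\bar i})^2$ on the right for exactly this reason: $\Phi$ is \emph{not} bounded, and the negative multiple of it is needed downstream (e.g.\ in \eqref{C2:inequality-11}).

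The paper's proof splits into the two regimes you conflated. When $S_1(X^{-1})$ is large (some $\lambda_i$ small), no concavity or cone condition is needed at all: writing $u_{i\bar i}-v_{i\bar i}=X_{i\bar i}-(\chi_v)_{i\bar i}$ and using $\chi_v\geq\epsilon\omega$ gives $\sum_i F^{i\bar i}(u_{i\bar i}-v_{i\bar i})\leq \sum_\a\frac{\a c_\a S_\a}{\sum c_\a S_\a}-\epsilon\Phi$, and half of the $-\epsilon\Phi$ term, combined with $\Phi\geq \frac{S_1}{2n}\sum_\a\frac{\a c_\a S_\a}{\sum c_\a S_\a}$ and an elementary scalar inequality, already yields the claim with $\theta=\epsilon/2$. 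Only in the complementary regime, where all eigenvalues are bounded below (so $\Phi$ is automatically bounded and $w\geq N$ forces $X_{1\bar 1}$ large), does the paper run the concavity--plus--cone-condition comparison --- and it does so against the full $n\times n$ matrix $\chi''=\mathrm{diag}(X_{1\bar 1},(\chi_v-\delta g|1))$ rather than passing to the $(n-1)\times(n-1)$ minor, which avoids your $O(\lambda_1^{-1})$ bookkeeping; the $-\theta\Phi$ term there is produced by the $\delta$-shift $v_{i\bar i}=(v_{i\bar i}-\delta)+\delta$, not by absorption into $\theta_0$. A secondary concern: your fallback of ``iterating the reduction on a further minor $(X|1,\dots,j)$'' is not supported by the cone condition as stated, which is a condition on single-index minors $(\chi_v|k)$ only. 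To repair your proof you would need to add the first case as a separate elementary argument and restrict the concavity step to the regime where the spectrum of $X$ is bounded below.
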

\begin{proof}
Without loss of generality, we may assume that $X_{1\bar 1} \geq \cdots \geq X_{n\bar n}$. Direct calculation shows that
\begin{equation}
\begin{aligned}
	&\, \sum^n_{\a = 1} \frac{c_\a}{\sum^n_{\a = 1} c_\a S_\a} \sum_i S_{\a - 1;i} (X^{i\bar i})^2 (u_{i\bar i} - v_{i\bar i}) \\
	\leq&\, \sum^n_{\a = 1} \frac{c_\a}{\sum^n_{\a = 1} c_\a S_\a} \sum_i S_{\a - 1;i} X^{i\bar i} - \epsilon \sum^n_{\a = 1} \frac{c_\a}{\sum^n_{\a = 1} c_\a S_\a} \sum_i S_{\a - 1;i} (X^{i\bar i})^2 \\
	=&\, \Big(1 - \frac{\epsilon }{2n} S_1\Big) \sum^n_{\a = 1} \frac{c_\a \a S_\a}{\sum^n_{\a = 1} c_\a S_\a} - \frac{\epsilon}{2} \sum^n_{\a = 1} \frac{c_\a}{\sum^n_{\a = 1} c_\a S_\a} \sum_i S_{\a - 1;i} (X^{i\bar i})^2 .
\end{aligned}
\end{equation}
Noting that for all $s > 0$
\begin{equation}
	\sup_M \psi s - \ln s \geq 1 + \ln \psi,
\end{equation}
and thus if $S_1 \geq \frac{2n}{\epsilon} (1 + \sup_M \psi)$, we have
\begin{equation}
\begin{aligned}
	&\, \sum^n_{\a = 1} \frac{c_\a}{\sum^n_{\a = 1} c_\a S_\a} \sum_i S_{\a - 1;i} (X^{i\bar i})^2 (u_{i\bar i} - v_{i\bar i}) \\
	\leq&\, - \ln \sum^n_{\a = 1} c_\a S_\a - \ln \psi - 1  - \frac{\epsilon}{2} \sum^n_{\a = 1} \frac{c_\a}{\sum^n_{\a = 1} c_\a S_\a} \sum_i S_{\a - 1;i} (X^{i\bar i})^2 .
\end{aligned}
\end{equation}

Now we just need to consider the case $X_{1\bar 1} \geq \cdots \geq X_{n\bar n} \geq \frac{\epsilon}{2n (1 + \sup_M \psi)}$. Rewriting
\begin{equation}
\begin{aligned}
	&\, \sum^n_{\a = 1} \frac{c_\a}{\sum^n_{\a = 1} c_\a S_\a}  \sum_i S_{\a - 1;i} (X^{i\bar i})^2 (u_{i\bar i} - v_{i\bar i}) \\
	\leq&\, \sum^n_{\a = 1} \frac{c_\a}{\sum^n_{\a = 1} c_\a S_\a}  \sum^n_{i = 2} S_{\a - 1;i} (X^{i\bar i})^2 (X_{i\bar i} - (\chi_{i\bar i} + v_{i\bar i} - \delta)) \\
	&\, - \delta  \sum^n_{\a = 1} \frac{c_\a}{\sum^n_{\a = 1} c_\a S_\a}  \sum_i S_{\a - 1;i} (X^{i\bar i})^2 + \sum^n_{\a = 1} \frac{c_\a}{\sum^n_{\a = 1} c_\a S_\a}  S_{\a - 1;1} X^{1\bar 1} .
\end{aligned}
\end{equation}
If $\delta > 0$ is small enough, $\chi - \delta g$ is still positive definite and satisfies \eqref{preliminary:cone-condition-equivalent-1}. So there are $N' > 0$ and $\sigma > 0$ such that for all $\lambda > N'$
\begin{equation}
	\ln \sum^n_{\a = 1} c_\a S_\a (\chi'^{-1}) <- \ln{\psi} - \sigma ,
\end{equation}
where
\begin{equation}
	\chi' = \left\{ 
	\begin{aligned}
	& \lambda 	& \\
	&			& (\chi_v - \delta g|1)
	\end{aligned}
	\right\}_{n\times n} .
\end{equation}

Since $- \ln \sum^n_{\a = 1} c_\a S_\a$ is concave,
\begin{equation}
\begin{aligned}
	&\, \sum^n_{\a = 1} \frac{c_\a}{\sum^n_{\a = 1} c_\a S_\a}  \sum_i S_{\a - 1;i} (X^{i\bar i})^2 (u_{i\bar i} - v_{i\bar i}) \\
	\leq&\, - \ln \sum^n_{\a = 1} c_\a S_\a + \ln \sum^n_{\a = 1} c_\a S_\a (\chi''^{-1}) - \delta  \sum^n_{\a = 1} \frac{c_\a}{\sum^n_{\a = 1} c_\a S_\a}  \sum_i S_{\a - 1;i} (X^{i\bar i})^2 \\
	&\, + \sum^n_{\a = 1} \frac{c_\a}{\sum^n_{\a = 1} c_\a S_\a}  S_{\a - 1;1} X^{1\bar 1},
\end{aligned}
\end{equation}
where
\begin{equation}
	\chi'' = \left\{ 
	\begin{aligned}
	& X_{1\bar 1} & \\
	&			& (\chi_v - \delta g|1)
	\end{aligned}
	\right\}_{n\times n} .
\end{equation}
If $X_{1\bar 1} > N'$,
\begin{equation}
\begin{aligned}
	&\, \sum^n_{\a = 1} \frac{c_\a}{\sum^n_{\a = 1} c_\a S_\a}  \sum_i S_{\a - 1;i} (X^{i\bar i})^2 (u_{i\bar i} - v_{i\bar i}) \\
	<&\,  - \ln \sum^n_{\a = 1} c_\a S_\a - \ln \psi -\sigma - \delta  \sum^n_{\a = 1} \frac{c_\a}{\sum^n_{\a = 1} c_\a S_\a}   \sum_i S_{\a - 1;i} (X^{i\bar i})^2 \\
	&\, + \sum^n_{\a = 1} \frac{c_\a}{\sum^n_{\a = 1} c_\a S_\a}   S_{\a - 1;1} X^{1\bar 1} \\
	\leq&\, - \ln \sum^n_{\a = 1} c_\a S_\a - \ln \psi -\sigma - \delta  \sum^n_{\a = 1} \frac{c_\a}{\sum^n_{\a = 1} c_\a S_\a}   \sum_i S_{\a - 1;i} (X^{i\bar i})^2  \\
	&\; + \sum^n_{\a = 1} \frac{c_\a}{\sum^n_{\a = 1} c_\a S_\a}  C^{\a - 1}_{n - 1} \Big(\frac{2n}{\epsilon} (1 + \sup_M \psi)\Big)^{\a - 1} X^{1\bar 1}.
\end{aligned}
\end{equation}
Using the bound~\eqref{preliminary:maximum-3} and let $X_{1\bar 1}$ be sufficiently large, we have
\begin{equation}
\begin{aligned}
	&\,\sum^n_{\a = 1} \frac{c_\a}{\sum^n_{\a = 1} c_\a S_\a}  \sum_i S_{\a - 1;i} (X^{i\bar i})^2 (u_{i\bar i} - v_{i\bar i}) \\
	\leq&\,  - \ln \sum^n_{\a = 1} c_\a S_\a - \ln \psi - \frac{\sigma}{2} - \delta  \sum^n_{\a = 1} \frac{c_\a}{\sum^n_{\a = 1} c_\a S_\a}   \sum_i S_{\a - 1;i} (X^{i\bar i})^2  .
\end{aligned}
\end{equation}

\end{proof}

\begin{theorem}
\label{C2:theorem-C2-estimate}
Let $u \in C^4 (M\times [0, T))$ be an admissible solution to equation~\eqref{introduction:parabolic-flow-equation} and $w = \Delta u + tr\chi$. Then there are uniform constants $C$ and $A$ such that
\begin{equation}
	w \leq C e^{A (u - \inf_{M\times [ 0, t]} u)},
\end{equation}
where $C$, $A$ depend only on geometric data.
\end{theorem}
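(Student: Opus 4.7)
The plan is a standard maximum principle argument on the test function
\begin{equation*}
Q(x,t) \;=\; \ln w \;+\; \phi(u - v) \;-\; A\bigl(u - \inf_{M\times[0,t]} u\bigr),
\end{equation*}
where $v$ is the function from the cone condition \eqref{introduction:cone-condition}, $\phi$ is a convex function to be chosen (e.g.\ $\phi(s) = -\ln(K - s)$ for a large constant $K$), and $A > 0$ is a large constant depending on geometry. If $Q$ stays bounded above uniformly, the claimed estimate for $w$ follows immediately by exponentiating. I will suppose $Q$ attains its maximum at an interior point $(p, t_0)$ with $t_0 > 0$; in normal coordinates at $p$ I arrange $g_{i\bar j} = \delta_{ij}$ and $X_{i\bar j}$ diagonal with $X_{1\bar 1} \geq \cdots \geq X_{n\bar n}$.

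Let $\mathcal{L}$ denote the linearized parabolic operator, so that at the maximum of $Q$ one has $(\partial_t - \mathcal{L})Q \geq 0$ and $\nabla Q = 0$. Using \eqref{preliminary:equation-2nd-derivative-2} applied to $w$ together with the commutation formula \eqref{preliminary:formula-X-1}, I will compute
\begin{equation*}
(\partial_t - \mathcal{L}) w \;\leq\; C_1 \;-\; \sum_{\a = 1}^n \frac{c_\a}{\sum_\a c_\a S_\a} \sum_{i,j,l} S_{\a-1;i}(X^{i\bar i})^2 X^{j\bar j}|X_{j\bar i l}|^2 \;+\; E_{\mathrm{tor}},
\end{equation*}
where $E_{\mathrm{tor}}$ collects the torsion cross terms $2\mathfrak{Re}\sum_p \overline{T^p_{ij}} X_{i\bar p j}$ that arise from switching $X_{i\bar i l \bar l}$ with $X_{l\bar l i \bar i}$. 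Dividing by $w$ and subtracting $\mathcal{L}\ln w = \mathcal{L}w/w - w^{-2}\sum_\a \frac{c_\a}{\sum c_\a S_\a}\sum_i S_{\a-1;i}(X^{i\bar i})^2|w_i|^2$, the negative $|w_i|^2/w^2$ term will be available to absorb quadratic errors from the critical point condition $\nabla Q = 0$.

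The main obstacle, as signalled in the introduction, is controlling $E_{\mathrm{tor}}$, because on a general Hermitian manifold the torsion cross term is of the same order as the good negative third-order term and does not have a sign. Following the Phong--Sturm device (also used in \cite{TWv11}), I will group
\begin{equation*}
\sum_{i,j,l} S_{\a-1;i}(X^{i\bar i})^2 X^{j\bar j}|X_{j\bar i l}|^2 \;+\; 2\mathfrak{Re}\sum_{p,i,j} S_{\a-1;i}(X^{i\bar i})^2 \overline{T^p_{ij}} X_{i\bar p j}
\end{equation*}
into a perfect square of the form $\sum S_{\a-1;i}(X^{i\bar i})^2 X^{j\bar j} \bigl|X_{j\bar i l} + (\text{torsion coefficient})\bigr|^2$ plus a bounded remainder controlled by $C\sum_\a \frac{c_\a}{\sum c_\a S_\a}\sum_i S_{\a-1;i}(X^{i\bar i})^2$.

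Finally, from the maximum principle the term $-A(\partial_t - \mathcal{L})u$ contributes, via the equation \eqref{preliminary:parabplic-equation-equivalent-1} and the decomposition $u_{i\bar i} - v_{i\bar i} = (u_{i\bar i} - v_{i\bar i})$, a quantity that Lemma~\ref{lemma-inequality} turns into
\begin{equation*}
-A(\partial_t - \mathcal{L})u \;\geq\; A\theta \sum_{\a = 1}^n \frac{c_\a}{\sum_\a c_\a S_\a} \sum_i S_{\a-1;i}(X^{i\bar i})^2 \;+\; A\theta \;-\; C_2 A,
\end{equation*}
provided $w \geq N$; the convex term $\phi(u - v)$ contributes analogous positive coefficients on $\sum_i S_{\a-1;i}(X^{i\bar i})^2$ with which one absorbs the torsion remainder. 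Choosing $A$ large enough, the combined inequality at the maximum of $Q$ forces $w(p, t_0) \leq C$, and unwinding the definition of $Q$ yields $w(x,t) \leq C e^{A(u(x,t) - \inf u)}$ everywhere. The case $t_0 = 0$ reduces to the bound at the initial data $u \equiv 0$, which is immediate from smoothness of $\chi$.
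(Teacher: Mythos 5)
Your overall strategy coincides with the paper's (a maximum principle applied to $\ln w$ plus zeroth-order terms, the Phong--Sturm perfect square for the torsion, and Lemma~\ref{lemma-inequality} as the source of the decisive negative term), but as written the argument has two genuine gaps. First, the zeroth-order part of your test function is $-A(u-\inf_{M\times[0,t]}u)$ rather than $-A(u-v)$. Writing $\mathcal{L}$ for the linearized operator and $\sum_i G^{i\bar i}=\sum^n_{\a=1}\frac{c_\a}{\sum^n_{\a=1}c_\a S_\a}\sum_i S_{\a-1;i}(X^{i\bar i})^2$, Lemma~\ref{lemma-inequality} controls $\mathcal{L}(u-v)$, not $\mathcal{L}u$; with your choice you are left with the extra term $A\,\mathcal{L}v=A\sum_i G^{i\bar i}v_{i\bar i}$, which is of size $A\,C_v\sum_i G^{i\bar i}$ with $C_v$ depending on $\|\chi_v\|_{C^0}$ and has no sign. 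Since the good term produced by the lemma is only $-A\theta\sum_i G^{i\bar i}$ with $\theta$ a fixed small constant, this error grows with $A$ and cannot be absorbed unless $\theta>C_v$, which is not guaranteed; taking $A$ larger makes matters worse. Relatedly, the displayed inequality $-A(\p_t-\mathcal{L})u\geq A\theta\sum(\cdots)+A\theta-C_2A$ points the wrong way: to contradict $(\p_t-\mathcal{L})Q\geq 0$ you need the \emph{upper} bound $-A(\p_t-\mathcal{L})(u-v)\leq-A\theta-A\theta\sum_i G^{i\bar i}$, which is what the lemma actually gives. Likewise, $w^{-2}\sum_i G^{i\bar i}|w_i|^2$ enters $(\p_t-\mathcal{L})\ln w$ with a plus sign, so it is an error to be absorbed, not a resource for absorbing others.

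Second, the quantitative heart of the proof --- how the torsion cross term, the gradient term $\p_i w/w$, and the barrier interact --- is missing. The paper's square is $\big|X_{i\bar jl}-X_{j\bar l}\,\p_iw/w-T^j_{il}X_{j\bar j}\big|^2$, so it must contain the $\p_iw/w$ piece as well as the torsion coefficient; after substituting the critical-point relation $\p_iw/w=-\p_i\phi$, the absorption costs a term of order $C_4A^2(wE_1^3)^{-1}\sum_i G^{i\bar i}$, where $E_1=(u-v-\inf_{M_t}(u-v)+1)^{-1}$ supplies the compensating $2E_1^3w\sum_iS_{\a-1;i}(X^{i\bar i})^2|u_i-v_i|^2$ through $\bpartial_i\p_i\phi$. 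This forces the dichotomy $w\gtrless AE_1^{-3/2}$: the first case yields a contradiction, while the second only yields $w(p,t_0)\leq A\,(\sup_{M_t}(u-v)-\inf_{M_t}(u-v)+1)^{3/2}$, and the stated exponential bound comes from unwinding $we^{\phi}$. Your sketch instead asserts a uniform bound $w(p,t_0)\leq C$ at the maximum, which this method does not deliver on a general Hermitian manifold; moreover your choice $\phi(s)=-\ln(K-s)$ with $K$ a fixed large constant is inadmissible because $u-v$ has no a priori upper bound at this stage --- no $C^0$ estimate is yet available, and the sharp form of the second-order estimate is precisely what is later used to derive it. Both issues are repairable (replace $-A(u-\inf u)$ by $-A(u-v)$ and $\phi$ by the paper's $-A(u-v)+E_1$), but as it stands the argument does not close.
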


\begin{proof}
We consider the function $w e^\phi$ where $\phi$ is to specified later. Suppose that $\ln w e^\phi$ achieves its maximum at some point $(p,t_0) \in M_t = M \times (0,t]$. Choose a local chart around $p$ such that $g_{i\bar j} = \delta_{ij}$ and $X_{i\bar j}$ is diagonal at $p$ when $t = t_0$. Therefore, we have at the point $(p,t_0)$,
\begin{equation}
\label{C2:test-derivative-1}
	\frac{\p_l w}{w} + \p_l \phi = 0 ,
\end{equation}
\begin{equation}
\label{C2:test-derivative-2}
	\frac{\bpartial_l w}{w} + \bpartial_l \phi = 0 ,
\end{equation}
\begin{equation}
\label{C2:test-derivative-3}
	\frac{\p_t w}{w} + \p_t \phi \geq 0 ,
\end{equation}
and
\begin{equation}
\label{C2:test-derivative-4}
	\frac{\bpartial_l\p_l w}{w} - \frac{|\p_l w |^2 }{w^2} + \bpartial_l\p_l \phi \leq 0 .
\end{equation}
Without loss of generality, we may assume that $w \gg 1$. Otherwise, the proof is finished.

Since
\begin{equation}
\label{C2:square}
\begin{aligned}
	&\, \sum_{i,j,l} S_{\a - 1;i} (X^{i\bar i})^2 X^{j\bar j} \Big|X_{i\bar jl} - X_{j\bar l} \frac{\p_i w}{w} - T^j_{il} X_{j\bar j}\Big|^2 \\
	=&\, \sum_{i,j,l} S_{\a - 1;i} (X^{i\bar i})^2 X^{j\bar j} X_{i\bar jl} X_{j\bar i\bar l} - \sum_i S_{\a - 1;i} (X^{i\bar i})^2 \frac{|\p_i w|^2}{w} \\
	&\, + \sum_{i,j,l} S_{\a - 1;i} (X^{i\bar i})^2 X_{j\bar j} T^j_{il} \ol{T^j_{il}} + \frac{2}{w} \sum_{i,j} S_{\a - 1;i} (X^{i\bar i})^2 \mathfrak{Re} \Big\{\sum_k \hat{T}^k_{ij} \chi_{k\bar j} {\bpartial_i w} \Big\} \\
	&\, - 2 \sum_{i,j,l} S_{\a - 1;i} (X^{i\bar i})^2 \mathfrak{Re}\Big\{X_{i\bar jl} \ol{T^j_{il}}\Big\},
\end{aligned}
\end{equation}
where $\hat{T}$ denotes the torsion with respect to the Hermitian metric $\chi$. Then
\begin{equation}
\label{C2:square-1}
\begin{aligned}
	 &  - \frac{2}{w} \sum_{i,j} S_{\a - 1;i} (X^{i\bar i})^2 \mathfrak{Re} \Big\{\sum_k \hat{T}^k_{ij} \chi_{k\bar j} {\bpartial_i w} \Big\} + \sum_i S_{\a - 1;i} (X^{i\bar i})^2 \frac{|\p_i w|^2}{w}\\
	 &\leq \sum_{i,j,l} S_{\a - 1;i} (X^{i\bar i})^2 X^{j\bar j} X_{i\bar jl} X_{j\bar i\bar l} - 2 \sum_{i,j,l} S_{\a - 1;i} (X^{i\bar i})^2 \mathfrak{Re}\Big\{X_{i\bar jl} \ol{T^j_{il}} \Big\} \\
	 &\,\hspace{12em} + \sum_{i,j,l} S_{\a - 1;i} (X^{i\bar i})^2 X_{j\bar j} T^j_{il} \ol{T^j_{il}} .
\end{aligned}
\end{equation}
So by \eqref{preliminary:formula-X-1} and \eqref{C2:square-1},
\begin{equation}
\label{C2:inequality-1}
\begin{aligned}
	&\, \sum_{i,j,l} S_{\a - 1;i} (X^{i\bar i})^2 X^{j\bar j} X_{j\bar i\bar l} X_{i\bar jl} - \sum_{i,l} S_{\a - 1;i} (X^{i\bar i})^2 X_{i\bar il\bar l} \\
	\geq&\, - \frac{2}{w} \sum_{i,j} S_{\a - 1;i} (X^{i\bar i})^2 \mathfrak{Re} \Big\{\sum_k \hat{T}^k_{ij} \chi_{k\bar j} {\bpartial_i w} \Big\} + \sum_i S_{\a - 1;i} (X^{i\bar i})^2 \frac{|\p_i w|^2}{w} \\
	&\,  - \sum_i S_{\a - 1;i} (X^{i\bar i})^2 \bpartial_i \p_i w  + \sum_{i,l} S_{\a - 1;i} (X^{i\bar i})^2 \Big(- R_{l\bar li\bar i} X_{i\bar i} + R_{i\bar il\bar l} X_{l\bar l} + G_{i\bar il\bar l}\Big) .
\end{aligned}
\end{equation}
Substituting \eqref{C2:test-derivative-4} into \eqref{C2:inequality-1}
\begin{equation}
\label{C2:inequality-2}
\begin{aligned}
	&\, \sum_{i,j,l} S_{\a - 1;i} (X^{i\bar i})^2 X^{j\bar j} X_{j\bar i\bar l} X_{i\bar jl} - \sum_{i,l} S_{\a - 1;i} (X^{i\bar i})^2 X_{i\bar il\bar l} \\
	\geq&\, - \frac{2}{w} \sum_{i,j} S_{\a - 1;i} (X^{i\bar i})^2 \mathfrak{Re} \{\sum_k \hat{T}^k_{ij} \chi_{k\bar j} {\bpartial_i w} \} + w \sum_i S_{\a - 1;i} (X^{i\bar i})^2 \bpartial_i \p_i \phi\\
	&\;\; + \sum_{i,l} S_{\a - 1;i} (X^{i\bar i})^2 \Big(- R_{l\bar li\bar i} X_{i\bar i} + R_{i\bar il\bar l} X_{l\bar l} + G_{i\bar il\bar l}\Big) .
\end{aligned}
\end{equation}
Combining \eqref{preliminary:equation-2nd-derivative-2} and \eqref{C2:inequality-2},
\begin{equation}
\label{C2:inequality-3}
\begin{aligned}
	\p_t w	\leq&\, C_1 + \sum^n_{\a = 1} \frac{c_\a}{\sum^n_{\a = 1} c_\a S_\a} \Bigg[ \frac{2}{w} \sum_{i,j} S_{\a - 1;i} (X^{i\bar i})^2 \mathfrak{Re} \Big\{\sum_k \hat{T}^k_{ij} \chi_{k\bar j} {\bpartial_i w} \Big\} \\
	&\hspace{12em} - w \sum_i S_{\a - 1;i} (X^{i\bar i})^2 \bpartial_i \p_i \phi \Bigg]\\
	&\, - \sum^n_{\a = 1} \frac{c_\a}{\sum^n_{\a = 1} c_\a S_\a} \Bigg[\sum_{i,l} S_{\a - 1;i} (X^{i\bar i})^2 \Big(- R_{l\bar li\bar i} X_{i\bar i} + R_{i\bar il\bar l} X_{l\bar l} + G_{i\bar il\bar l}\Big)\Bigg]  .
\end{aligned}
\end{equation}
Then
\begin{equation}
\label{C2:inequality-4}
\begin{aligned}
	\p_t w	\leq&\, \sum^n_{\a = 1} \frac{c_\a}{\sum^n_{\a = 1} c_\a S_\a} \Bigg[ \frac{2}{w} \sum_{i,j} S_{\a - 1;i} (X^{i\bar i})^2 \mathfrak{Re} \Big\{\sum_k \hat{T}^k_{ij} \chi_{k\bar j} {\bpartial_i w} \Big\} \\
	&\hspace{10em} - w \sum_i S_{\a - 1;i} (X^{i\bar i})^2 \bpartial_i \p_i \phi \Bigg]\\
	&\,  + C_3 w \sum^n_{\a = 1} \frac{c_\a}{\sum^n_{\a = 1} c_\a S_\a} \sum_{i} S_{\a - 1;i} (X^{i\bar i})^2  + C_2 .
\end{aligned}
\end{equation}
By \eqref{C2:test-derivative-2} and \eqref{C2:test-derivative-3},
\begin{equation}
\label{C2:inequality-5}
\begin{aligned}
	w \p_t\phi \geq&\,  \sum^n_{\a = 1} \frac{c_\a}{\sum^n_{\a = 1} c_\a S_\a} \Bigg[ 2 \sum_{i,j} S_{\a - 1;i} (X^{i\bar i})^2 \mathfrak{Re} \Big\{\sum_k \hat{T}^k_{ij} \chi_{k\bar j} \bpartial_i \phi \Big\}\\
	&\hspace{10em}  + w \sum_i S_{\a - 1;i} (X^{i\bar i})^2 \bpartial_i \p_i \phi \Bigg]\\
	&\, - C_3 w \sum^n_{\a = 1} \frac{c_\a}{\sum^n_{\a = 1} c_\a S_\a} \sum_{i} S_{\a - 1;i} (X^{i\bar i})^2  - C_2 .
\end{aligned}
\end{equation}

To apply a trick due to Phong and Sturm~\cite{PhongSturm10}, we specify $\phi$ as follows,
\begin{equation}
\label{C2:definition-phi}
	\phi := - A (u - v) + \frac{1}{u - v - \inf_{M_t} (u - v) + 1} = - A (u - v) + E_1 .
\end{equation}
We may assume $A > N \gg 1$, where $N$ is the crucial constant in Lemma~\ref{lemma-inequality}.

It is easy to see that
\begin{equation}
\label{C2:phi-derivative-1}
	\p_t \phi = - (A + E^2_1) \p_t u ,
\end{equation}
\begin{equation}
\label{C2:phi-derivative-2}
	\p_i \phi = - (A + E^2_1) (u_i - v_i) ,
\end{equation}
and
\begin{equation}
\label{C2:phi-derivative-3}
	\bpartial_i \p_i \phi = - (A + E^2_1) (u_{i\bar i} - v_{i\bar i}) + 2 | u_i - v_i |^2 E^3_1 .
\end{equation}
By \eqref{C2:phi-derivative-2} and \eqref{C2:phi-derivative-3}, 
\begin{equation}
\label{C2:inequality-6}
\begin{aligned}
	&\, 2 \sum_{i,j} S_{\a - 1;i} (X^{i\bar i})^2 \mathfrak{Re} \Big\{\sum_k \hat{T}^k_{ij} \chi_{k\bar j} \bpartial_i \phi\Big\} + w \sum_i S_{\a - 1;i} (X^{i\bar i})^2 \bpartial_i \p_i \phi \\
	=&\, - 2 (A + E^2_1) \sum_i S_{\a - 1;i} (X^{i\bar i})^2 \mathfrak{Re} \Big\{\sum_{j,k} \hat{T}^k_{ij} \chi_{k\bar j} (u_{\bar i} - v_{\bar i})\Big\} \\
	&\, +2 E^3_1 w \sum_i S_{\a - 1;i} (X^{i\bar i})^2 |u_i - v_i| - (A + E^2_1) w \sum_i S_{\a - 1;i} (X^{i\bar i})^2 (u_{i\bar i} - v_{i\bar i}) .
\end{aligned}
\end{equation}
Using Schwarz inequality to control the first term in the right,
\begin{equation}
\label{C2:inequality-7}
\begin{aligned}
	&\, 2 (A + E^2_1) \Bigg| \sum_i S_{\a - 1;i} (X^{i\bar i})^2 \mathfrak{Re} \Big\{\sum_{j,k} \hat{T}^k_{ij} \chi_{k\bar j} (u_{\bar i} - v_{\bar i})\Big\}\Bigg| \\
	\leq&\, w E^3_1 \sum_i S_{\a - 1;i} (X^{i\bar i})^2 |u_i - v_i|^2 + \frac{C_4 A^2}{w E^3_1} \sum_i S_{\a - 1;i} (X^{i\bar i})^2 ,
\end{aligned}
\end{equation}
and hence
\begin{equation}
\label{C2:inequality-8}
\begin{aligned}
	&\, 2 \sum_{i,j} S_{\a - 1;i} (X^{i\bar i})^2 \mathfrak{Re} \Big\{\sum_k \hat{T}^k_{ij} \chi_{k\bar j} \bpartial_i \phi\Big\} + w \sum_i S_{\a - 1;i} (X^{i\bar i})^2 \bpartial_i \p_i \phi \\
	\geq&\, - \frac{C_4 A^2}{w E^3_1} \sum_i S_{\a - 1;i} (X^{i\bar i})^2 - (A + E^2_1) w \sum_i S_{\a - 1;i} (X^{i\bar i})^2 (u_{i\bar i} - v_{i\bar i}) .
\end{aligned}
\end{equation}
Substituting \eqref{C2:phi-derivative-1} and \eqref{C2:inequality-8} into \eqref{C2:inequality-5}
\begin{equation}
\label{C2:inequality-9}
\begin{aligned}
	 w (A + E^2_1) \p_t u \leq&\,   \frac{C_4 A^2}{w E^3_1} \sum^n_{\a = 1} \frac{c_\a}{\sum^n_{\a = 1} c_\a S_\a}   \sum_i S_{\a - 1;i} (X^{i\bar i})^2  \\
	&\, + (A + E^2_1) w \sum^n_{\a = 1} \frac{c_\a}{\sum^n_{\a = 1} c_\a S_\a}  \sum_i S_{\a - 1;i} (X^{i\bar i})^2 (u_{i\bar i} - v_{i\bar i}) \\
	&\, + C_3 w \sum^n_{\a = 1} \frac{c_\a}{\sum^n_{\a = 1} c_\a S_\a} \sum_{i} S_{\a - 1;i} (X^{i\bar i})^2  + C_2 ,
\end{aligned}
\end{equation}
that is
\begin{equation}
\label{C2:inequality-10}
\begin{aligned}
	 0 \leq&\,   w (A + E^2_1) \Bigg[ \ln \Big(\psi \sum^n_{\a = 1} c_\a S_\a\Big)  \\
	 &\hspace{5em}+ \sum^n_{\a = 1} \frac{c_\a}{\sum^n_{\a = 1} c_\a S_\a}  \sum_i S_{\a - 1;i} (X^{i\bar i})^2 (u_{i\bar i} - v_{i\bar i}) \Bigg]  \\
	&\, + \Big( \frac{C_4 A^2}{w E^3_1} + C_3 w \Big) \sum^n_{\a = 1} \frac{c_\a}{\sum^n_{\a = 1} c_\a S_\a}   \sum_i S_{\a - 1;i} (X^{i\bar i})^2  + C_2 .
\end{aligned}
\end{equation}

If $w > A (u - v - \inf_{M_t} (u - v) + 1)^{\frac{3}{2}} = A E^{-\frac{3}{2}}_1$, there is $\theta > 0$ such that,
\begin{equation}
\label{C2:inequality-11}
\begin{aligned}
	 0 
	\leq&\, - \theta w (A + E^2_1) \Bigg[ 1 + \sum^n_{\a = 1} \frac{c_\a}{\sum^n_{\a = 1} c_\a S_\a} \sum_i S_{\a - 1;i} (X^{i\bar i})^2 \Bigg]  \\
	&\, + w (C_3 + C_4) \sum^n_{\a = 1} \frac{c_\a}{\sum^n_{\a = 1} c_\a S_\a}   \sum_i S_{\a - 1;i} (X^{i\bar i})^2  + C_2 .
\end{aligned}
\end{equation}
This gives a bound $w \leq 1$ at $(p ,t_0)$ if we pick a sufficiently large $A$, which contradicts our assumption.

On the other hand, if $w \leq A (u - v - \inf_{M_t} (u - v) + 1)^{\frac{3}{2}}$,
\begin{equation}
\label{C2:inequality-12}
\begin{aligned}
	w e^\phi \leq w e^\phi |_{(p,t_0)} &\leq A (u - v - \inf_{M_t} (u - v) + 1)^{\frac{3}{2}} e^{- A(u - v) + 1} |_{(p,t_0)} \\
	&\leq A e^2 e^{- A \inf_{M_t} (u - v)} ,
\end{aligned}
\end{equation}
and hence
\begin{equation}
\begin{aligned}
	w 
	&\leq A e^2 e^{- A \inf_{M_t} (u - v) + A(u - v) - E_1} \\
	&\leq C e^{A(u - \inf_{M_t} u)} .
\end{aligned}
\end{equation}

\end{proof}

\bigskip

\section{Long time existence and time-independent estimates}
\label{long}
\setcounter{equation}{0}
\medskip

Since $\p_t u$ is bounded, we are able to obtain $C^0$ estimate dependent on time $t$. By the Evans-Krylov theorem and Schauder estimates, we can obtain $C^\infty$ estimates dependent on $t$. Then it is standard to prove the long time existence. To prove convergence, we need time-independent estimates instead. In this section, we obtain time-independent estimates up to second order. Higher order estimates follow from Evans-Krylove theorem and Schauder estimate.

\medskip
\subsection{Hermitian case}

\begin{theorem}
\label{long:theorem-hermitian-c0}
Under the assumption of Theorem~\ref{introduction:theorem-hermitian}, there exists a uniform constant $C$ such that
\begin{equation}
	\sup_M u(x,t) - \inf_M u(x,t) \leq C.
\end{equation}
\end{theorem}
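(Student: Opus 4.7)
The idea is to reduce the oscillation estimate at each time slice to the direct $L^\infty$ bound for the elliptic equation \eqref{introduction:elliptic-equation-origin} established by the author in \cite{Sun2014e, Sun2014u, Sun2014g}. The starting point is condition \eqref{introduction:theorem-hermitian-condition}: at $t=0$ we have $u(\cdot,0) \equiv 0$ and hence $\chi_u|_{t=0} = \chi$, so the condition reads precisely
\begin{equation*}
\p_t u(x, 0) = \ln\biggl[\frac{\chi^n}{\psi \sum_{\a = 1}^n b_\a \chi^{n - \a} \wedge \omega^\a}\biggr] \leq 0.
\end{equation*}
The maximum-principle bound \eqref{preliminary:maximum-1} recorded in the preliminaries then upgrades this to $\p_t u(x,t) \leq 0$ for every $t \geq 0$. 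In particular $u(x,\cdot)$ is pointwise non-increasing and $\sup_M u(\cdot, t) \leq 0$ uniformly in $t$.

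Next, I would normalize by setting $v(x,t) := u(x,t) - \sup_M u(\cdot, t)$, so that $\sup_M v(\cdot, t) = 0$, $v \leq 0$, and $\chi_v = \chi_u > 0$. Rewriting \eqref{introduction:parabolic-flow-equation} in terms of $v$ yields the elliptic equation
\begin{equation*}
\chi_v^n = \tilde\psi \sum_{\a = 1}^n b_\a \chi_v^{n - \a} \wedge \omega^\a, \qquad \sup_M v(\cdot, t) = 0,
\end{equation*}
with coefficient $\tilde\psi(x,t) := e^{\p_t u(x,t)}\psi(x)$. By \eqref{preliminary:maximum-1}, $\tilde\psi$ enjoys uniform positive upper and lower bounds independent of $t$; and because $\p_t u \leq 0$ forces $\tilde\psi \leq \psi$, the cone condition \eqref{introduction:cone-condition} --- with the same auxiliary function --- continues to hold with $\tilde\psi$ in place of $\psi$, since its right-hand side is linear in $\psi$ with nonnegative coefficients.

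Finally, I would apply the direct elliptic $L^\infty$ estimate of the author \cite{Sun2014e, Sun2014u, Sun2014g} to $v(\cdot, t)$ at each fixed $t$. With the hypotheses $\sup_M v = 0$, uniform cone condition, and uniform upper and lower bounds on the source $\tilde\psi$, this gives $\|v(\cdot, t)\|_{L^\infty(M)} \leq C$ for a $t$-independent constant $C$ depending only on the geometry and on the above uniform bounds. Since $\|v(\cdot, t)\|_{L^\infty} = \sup_M u(\cdot, t) - \inf_M u(\cdot, t)$, this is precisely the claim. The main (and really the only) obstacle is the careful verification that the hypotheses of the elliptic estimate hold uniformly along the flow; the key observation enabling this is that the cone condition is preserved under replacing $\psi$ by any smaller positive function, which is exactly why condition \eqref{introduction:theorem-hermitian-condition} --- forcing $\tilde\psi \leq \psi$ throughout the evolution --- is the natural hypothesis under which this reduction succeeds.
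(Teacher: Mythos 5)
Your argument is correct, but it follows a genuinely different route from the paper. The paper's proof is by contradiction: it combines the sharp second-order estimate of Theorem~\ref{C2:theorem-C2-estimate}, $w \leq C e^{A(u - \inf_{M\times[0,t]} u)}$, with the Cherrier--Tosatti--Weinkove mechanism of \cite{TWv10a}, by which an exponential bound of this form on $\Delta u + tr\chi$ forces a bound on the oscillation; the monotonicity $\p_t u \leq 0$ (which you derive in the same way, from \eqref{introduction:theorem-hermitian-condition} and the maximum principle) enters there only to identify $\inf_{M\times[0,t_i]} u$ with $\inf_M u(\cdot,t_i)$, so that the $C^2$ bound is in the usable form. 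You instead use $\p_t u \leq 0$ to observe that each time slice solves the elliptic equation with right-hand side $\tilde\psi = e^{\p_t u}\psi \leq \psi$, uniformly pinched between positive constants by \eqref{preliminary:maximum-1}, and that the cone condition is monotone under decreasing $\psi$, so the direct elliptic $L^\infty$ estimates of \cite{Sun2014e, Sun2014u, Sun2014g, Szekelyhidi2014b} apply slice by slice. This is a clean reduction and is precisely the observation that explains why hypothesis \eqref{introduction:theorem-hermitian-condition} is the natural one. What it buys is brevity and independence from the $C^2$ estimate; what it costs is self-containedness: the one point left to the reader is that the constant in the cited elliptic estimate depends only on $\sup \tilde\psi$, $\inf \tilde\psi$ and the fixed subsolution data (not, say, on derivatives of $\tilde\psi$, which you do not control uniformly) --- this is true for those references but must be checked. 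Note also that the paper's route is the one that survives in Theorem~\ref{introduction:theorem-kahler}, where \eqref{introduction:theorem-hermitian-condition} is not assumed, $\p_t u$ need not be nonpositive, and the slice-wise reduction breaks down because $\tilde\psi$ can exceed $\psi$ and destroy the cone condition.
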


\begin{proof}

We prove the theorem by contradiction. If such a bound does not exist, there is a sequence $t_i \rightarrow \infty$ such that
\begin{equation}
	\sup_M u(x,t_i) - \inf_M u(x,t_i) \rightarrow \infty .
\end{equation}

By \eqref{introduction:theorem-hermitian-condition} and the maximum principle,
\begin{equation}
\label{long:theorem-hermitian-c0-bound-1}
	\p_t u \leq 0 .
\end{equation}
Thus for $t > s \geq 0$,
\begin{equation}
\label{long:theorem-hermitian-c0-bound-2}
	\sup_M u(x,t) \leq \sup_M u(x,s) \leq 0 
\end{equation}
and
\begin{equation}
\label{long:theorem-hermitian-c0-bound-3}
	\inf_M u(x,t) \leq \inf_M u(x,s) \leq 0 
\end{equation}
So we have
\begin{equation}
	\inf_M u(x,t_i) = \inf_{t\in [0,t_i]} \inf_M u(x,t) \rightarrow - \infty.
\end{equation}

By theorem~\ref{C2:theorem-C2-estimate},
\begin{equation}
\begin{aligned}
	w(x,t_i) &\leq C e^{A (u(x,t_i) - \inf_{M \times [0,t_i]} u)} \\
	&= C e^{A (u(x,t_i) - \inf_M u(x,t_i))}.
\end{aligned}
\end{equation}
As shown in \cite{TWv10a}, it follows that
\begin{equation}
	\sup_M u(x,t_i) - \inf_M u(x,t_i) \leq C
\end{equation}
for some positive constant $C$, which contradicts our assumption.

\end{proof}

Following Theorem~\ref{long:theorem-hermitian-c0}, we can show that the $C^2$ estimate is also independent on time $t$.
\begin{corollary}
\label{long:corollary-hermitian-c2}
Under the assumption of Theorem~\ref{introduction:theorem-hermitian}, there exists a uniform constant $C$ such that
\begin{equation}
	w(x,t) \leq C.
\end{equation}
\end{corollary}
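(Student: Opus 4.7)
The plan is to chain together two facts already in hand: the sharp exponential second-order estimate from Theorem~\ref{C2:theorem-C2-estimate}, and the uniform oscillation bound from Theorem~\ref{long:theorem-hermitian-c0}. The only issue to be careful about is that the $C^2$ estimate involves $\inf_{M\times[0,t]} u$, not $\inf_M u(\cdot,t)$, and a priori the former could tend to $-\infty$ much faster than the latter. Fortunately, the monotonicity information established inside the proof of Theorem~\ref{long:theorem-hermitian-c0} closes this gap.

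First I would observe that, by \eqref{long:theorem-hermitian-c0-bound-1}, the assumption \eqref{introduction:theorem-hermitian-condition} together with the maximum principle gives $\partial_t u \leq 0$ along the flow. Hence for each fixed $x \in M$, the map $t \mapsto u(x,t)$ is non-increasing, and in particular $s \mapsto \inf_M u(\cdot, s)$ is non-increasing in $s$. Consequently,
\begin{equation}
\inf_{M\times [0,t]} u \;=\; \inf_M u(\cdot,t).
\end{equation}

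Next I would plug this identity into Theorem~\ref{C2:theorem-C2-estimate}, obtaining
\begin{equation}
w(x,t) \;\leq\; C\, e^{A\,(u(x,t) - \inf_M u(\cdot,t))} \;\leq\; C\, e^{A\,\bigl(\sup_M u(\cdot,t) - \inf_M u(\cdot,t)\bigr)}.
\end{equation}
Finally I would invoke Theorem~\ref{long:theorem-hermitian-c0}, which provides a uniform bound on the oscillation $\sup_M u(\cdot,t) - \inf_M u(\cdot,t)$ independent of $t$. Substituting this uniform bound into the inequality above yields the desired time-independent estimate $w(x,t) \leq C'$ for some constant $C'$ depending only on geometric data and on the oscillation constant.

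There is no serious obstacle here; the corollary is essentially a bookkeeping consequence of already-established facts. The only subtle point worth highlighting is the identification of $\inf_{M\times[0,t]} u$ with $\inf_M u(\cdot, t)$, which relies crucially on the sign condition \eqref{introduction:theorem-hermitian-condition} used in Theorem~\ref{introduction:theorem-hermitian} to force monotonicity of $u$ in $t$; without this, one would only control the oscillation at each fixed time but not the cumulative decrease, and the exponential factor in the $C^2$ estimate could blow up as $t \to \infty$.
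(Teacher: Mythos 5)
Your proposal is correct and follows the same route as the paper: the paper's proof likewise uses the monotonicity of $\inf_M u(\cdot,t)$ coming from $\partial_t u \leq 0$ (recorded in \eqref{long:theorem-hermitian-c0-bound-3}) to identify $\inf_{M\times[0,t]} u$ with $\inf_M u(\cdot,t)$, and then combines Theorem~\ref{C2:theorem-C2-estimate} with the oscillation bound of Theorem~\ref{long:theorem-hermitian-c0}. Your explicit remark about why this identification is the crux is exactly the point the paper's terser proof relies on.
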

\begin{proof}
By \eqref{long:theorem-hermitian-c0-bound-3} and theorem~\ref{C2:theorem-C2-estimate},
\begin{equation}
	w (x,t) \leq C e^{A (u - \inf_{M \times [0,t]} u)} =  C e^{A (u - \inf_{M} u (x,t))}.
\end{equation}
Then the conclusion follows from Theorem~\ref{long:theorem-hermitian-c0}.

\end{proof}

\medskip
\subsection{K\"ahler case}


\begin{theorem}
\label{long:theorem-kahler-c0}
Under the assumption of Theorem~\ref{introduction:theorem-kahler}, there exists a uniform constant $C$ such that
\begin{equation}
	\sup_M u(x,t) - \inf_M u(x,t) \leq C.
\end{equation}
\end{theorem}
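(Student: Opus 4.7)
The plan is to mirror the contradiction argument of Theorem~\ref{long:theorem-hermitian-c0}: assume a sequence $t_i\to\infty$ with $\sup_M u(\cdot,t_i) - \inf_M u(\cdot,t_i)\to\infty$ and produce a contradiction from the sharp $C^2$ estimate of Theorem~\ref{C2:theorem-C2-estimate} fed into the Tosatti-Weinkove $C^0$-from-$C^2$ technique of \cite{TWv10a}. The bound $|\p_t u|\le C_0$ from \eqref{preliminary:maximum-1} remains at our disposal.

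The essential obstruction is that $\psi\ge c$ is a \emph{global} condition and does not yield $\p_t u\le 0$ pointwise, so one cannot conclude, as in the Hermitian case, that $\inf_{M\times[0,t]} u = \inf_M u(\cdot,t)$ when reading off the $C^2$ bound. To replace this pointwise monotonicity, I would pass to a global monotone $J$-type functional. First, K\"ahlerness of $\chi$ and $\omega$ and Stokes' theorem make each $\int_M\chi_u^{n-\a}\wedge\omega^\a$ a cohomological invariant, so the flow equation together with \eqref{introduction:definition-invariant-constant} yields the balance
\begin{equation*}
\int_M\bigl(\psi e^{\p_t u}-c\bigr)\sum_{\a=1}^n b_\a\chi_u^{n-\a}\wedge\omega^\a=0,
\end{equation*}
and the hypothesis $\psi\ge c$ forces $\int_M(e^{\p_t u}-1)\mu\le 0$ at every time, where $\mu=\sum_\a b_\a\chi_u^{n-\a}\wedge\omega^\a$. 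Next I would introduce a functional $J(u)$ on $[\chi]$ whose first variation along any path is $\delta J=\int_M\delta u\,\bigl(\mu(u)-c^{-1}\chi_u^n\bigr)$, following \cite{SW08,FLM11,CollinsSzekelyhidi2014a}; path-independence of the definition is guaranteed by K\"ahlerness. A short computation using the balance above and Jensen's inequality shows $\frac{d}{dt}J(u(\cdot,t))\le 0$, so $J(u(\cdot,t))\le J(0)$ uniformly in $t$.

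To conclude, I would combine the upper bound $J(u(\cdot,t))\le J(0)$ with a properness estimate of the form $J(u)\ge \delta(\sup_M u-\inf_M u)-C$ valid for admissible $u$ under the cone condition~\eqref{introduction:cone-condition}, which is the elliptic $L^\infty$ input available from \cite{Sun2014g,Szekelyhidi2014b}. The main obstacle I expect is this coercivity step: while the monotonicity of $J$ along the flow is an almost immediate consequence of Stokes and $\psi\ge c$, producing a quantitative oscillation bound from $J$ requires the cone condition in an essential way and leans on the elliptic $L^\infty$ machinery in the cited works. Once the two estimates are combined, the uniform bound $\sup_M u(\cdot,t)-\inf_M u(\cdot,t)\le C$ follows, contradicting the assumption along $t_i$ and finishing the proof of Theorem~\ref{long:theorem-kahler-c0}.
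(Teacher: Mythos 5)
Your starting point is correct and matches the paper's: the pointwise monotonicity $\partial_t u\le 0$ of the Hermitian case is unavailable, and the fix is a monotone $J$-type functional built from the K\"ahler cohomological invariance of $\int_M\chi_u^{n-\a}\wedge\omega^\a$ together with Jensen's inequality and $\psi\ge c$. One caveat on your choice of functional: the paper uses $J(u)=\sum_\a b_\a J_\a(u)$ with first variation $\int_M\delta u\,\mu$, for which $\tfrac{d}{dt}J\le 0$ follows cleanly from Jensen; for your variant with variation $\int_M\delta u\,(\mu-c^{-1}\chi_u^n)$ the sign is not clear when $\psi>c$ --- writing $y=\chi_u^n/(\psi\mu)$ the integrand becomes $\mu\,\ln y\,(1-\tfrac{\psi}{c}y)$, which is positive wherever $c/\psi<y<1$ --- so you should drop the $-c^{-1}\chi_u^n$ term.

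The genuine gap is your concluding coercivity step. You invoke a properness estimate $J(u)\ge\delta(\sup_Mu-\inf_Mu)-C$ for all admissible $u$ under the cone condition, attributed to elliptic $L^\infty$ machinery. No such estimate is available as an input: properness (indeed even lower boundedness) of this functional is precisely what Section~\ref{J} of the paper derives as a \emph{corollary} of Theorem~\ref{introduction:theorem-kahler}, so assuming it here is circular. Moreover, if such an estimate were in hand, the oscillation bound would follow directly from $J(u(t))\le J(0)$ and the $C^2$ estimate you announce at the outset would play no role --- a sign that the real difficulty has been bypassed rather than resolved. The paper instead sets $\hat u=u-J(u)/\sum_\a b_\a\int_M\chi^{n-\a}\wedge\omega^\a$, so that $\sum_\a\tfrac{b_\a}{n-\a+1}\sum_i\int_M\hat u\,\chi_{\hat u}^i\wedge\chi^{n-\a-i}\wedge\omega^\a=0$; a Yau/Weinkove-type manipulation (Lemma~\ref{long:lemma-kahler}) then yields only the weak inequality $0\le\sup_M\hat u\le -C_1\inf_M\hat u+C_2$, reducing the problem to a lower bound on $\inf_M\hat u$. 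The decisive step --- the one missing from your proposal --- is that along a sequence $t_i$ where $\inf_M\hat u(\cdot,t_i)$ attains its running minimum over $[0,t_i]$, the monotonicity of $J$ together with $u\le\hat u$ forces $\inf_M u(\cdot,t_i)=\inf_{M\times[0,t_i]}u$; only then can the bound of Theorem~\ref{C2:theorem-C2-estimate} be read as $w\le Ce^{A(u-\inf_Mu(\cdot,t_i))}$ and fed into the Tosatti--Weinkove argument to produce an oscillation bound at $t_i$, contradicting $\inf_M\hat u(\cdot,t_i)\to-\infty$. You need to supply this transfer-of-infimum argument (or an equivalent) in place of the unproven properness claim.
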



First of all, we recall the definition of extended $J$-functionals, which was done in \cite{FLM11, Sun2013p}. Let 
\begin{equation}
	\mathcal{H} := \{u \in C^\infty(M) \,|\, \chi_u > 0\}.
\end{equation}
For any curve $v(s) \in \mathcal{H}$, the functional $J_\a$ is defined by
\begin{equation}
\label{long:definition-J}
	\frac{d J_\a}{d s} = \int_M \frac{\p v}{\p s} \chi^{n - \a}_v \wedge \omega^\a .
\end{equation}
Then we have a formula for $J_\a$ of $u\in\mathcal{H}$,
\begin{equation}
	J_\a (u) = \int^1_0 \int_M \frac{\p v}{\p s} \chi^{n - \a}_v \wedge \omega^\a ds ,
\end{equation}
for any path $v(s) \subset \mathcal{H}$ connecting $0$ and $u$.   The functional is independent on the path. Restricting the integration to the line $v(s) = s u$,
\begin{equation}
\label{long:J-property-1}
\begin{aligned}
	J_\a (u) &= \int^1_0 \int_M u \chi^{n - \a}_{s u} \wedge \omega^\a ds \\
	&= \frac{1}{n - \a + 1} \sum^{n - \a}_{i = 0} \int_M u \chi^i_u \wedge \chi^{n - \a - i} \wedge \omega^\a .
\end{aligned}
\end{equation}
Define a new functional $J$ for equation~\eqref{introduction:parabolic-flow-equation},
\begin{equation}
\label{long:definition-J-new}
	J(u)  := \sum^n_{\a = 1} b_\a J_{\a}(u) = \int^1_0 \int_M \frac{\p v}{\p s} \Big( \sum^n_{\a = 1}  b_\a \chi^{n - \a}_v \wedge \omega^\a \Big) ds ,
\end{equation}
for any path $v(s) \subset \mathcal{H}$ connecting $0$ and $u$. For any $u (x,t)$, we have
\begin{equation}
\label{long:J-new-property-1}
	J (u (T)) = \int^T_0 \int_M \frac{\p u}{\p t} \Big( \sum^n_{\a = 1}  b_\a \chi^{n - \a}_v \wedge \omega^\a \Big)  dt = \int^T_0 \frac{d J}{dt} dt,
\end{equation}
and
and thus along the solution flow $u(x,t)$ to equation~\eqref{introduction:parabolic-flow-equation},
\begin{equation}
\label{long:J-new-property-2}
\begin{aligned}
	\frac{d }{d t} J (u) &= \int_M \p_t u \Big( \sum^n_{\a = 1}  b_\a \chi^{n - \a}_u \wedge \omega^\a \Big) \\
	&= \int_M \Big(\ln \frac{\chi^n_u}{ \sum^n_{\a = 1} b_\a \chi^{n - \a}_u \wedge \omega^\a } - \ln \psi\Big) \Big( \sum^n_{\a = 1}  b_\a \chi^{n - \a}_u \wedge \omega^\a \Big) \\
	&\leq \ln c \int_M \Big( \sum^n_{\a = 1}  b_\a \chi^{n - \a}_u \wedge \omega^\a \Big) - \int_M \ln \psi \Big( \sum^n_{\a = 1}  b_\a \chi^{n - \a}_u \wedge \omega^\a \Big) .
\end{aligned}
\end{equation}
The last inequality follows from Jesen's inequality. The condition $\psi \geq c$ implies that
\begin{equation}
\label{long:J-solution-property-1}
	\frac{d }{d t} J (u) \leq 0.
\end{equation}
This tells us that $J(u)$ is decreasing and non-positive along the solution flow. Let
\begin{equation}
\label{long:J-normalization}
	\hat u = u - \frac{J (u) }{\sum^n_{\a = 1} b_\a \int_M   \chi^{n - \a} \wedge \omega^\a  } .
\end{equation}
Then
\begin{equation}
\label{long:J-normalization-property-1}
	u (x,t) \leq \hat u(x,t).
\end{equation}


\begin{lemma}
\label{long:lemma-kahler}

\begin{equation}
\label{long:lemma-kahler-inequality}
	0 \leq \sup_M \hat u(x,t) \leq - C_1 \inf_M \hat u(x,t) + C_2 ,
\end{equation}
for some constants $C_1$ and $C_2$.

\end{lemma}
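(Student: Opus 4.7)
The plan for $0 \leq \sup_M \hat u$ is to first establish $J(\hat u) = 0$ and then argue by contradiction.  On a K\"ahler manifold, Stokes' theorem gives the cohomological identity $\int_M \chi_u^{n - \alpha} \wedge \omega^\alpha = \int_M \chi^{n - \alpha} \wedge \omega^\alpha$ for every $u \in \mathcal{H}$; using this in the path-integral definition of $J$ yields the translation rule $J(u + c) = J(u) + c \sum_\alpha b_\alpha \int_M \chi^{n - \alpha} \wedge \omega^\alpha$, and choosing $c$ as in \eqref{long:J-normalization} forces $J(\hat u) = 0$.  If $\hat u$ were strictly negative on $M$, the admissible path $v(s) = s \hat u$ (in $\mathcal{H}$ because $\chi_{v(s)} = (1 - s)\chi + s \chi_u > 0$) would give $J(\hat u) < 0$ by strict sign of the integrand, contradicting $J(\hat u) = 0$.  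The symmetric argument yields $\inf_M \hat u \leq 0$, which will be used in the upper bound.

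\textbf{Upper bound.} Having $\inf_M \hat u \leq 0$ in hand, the stated inequality reduces to the oscillation bound $\sup_M \hat u - \inf_M \hat u \leq C$, since then $\sup_M \hat u \leq \inf_M \hat u + C \leq -\inf_M \hat u + C$---the claim with $C_1 = 1$ and $C_2 = C$.  For the oscillation bound I would run Moser iteration driven by Theorem~\ref{C2:theorem-C2-estimate}: since $u$ and $\hat u$ differ only by a function of $t$, we have $\Delta \hat u = \Delta u$ and $\hat u - \inf_M \hat u = u - \inf_{M \times [0, t]} u$ along the flow, so the $C^2$ estimate becomes $-C \leq \Delta \hat u \leq C \exp\bigl(A (\hat u - \inf_M \hat u)\bigr)$ (the lower bound coming from $\chi_u > 0$).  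Setting $\psi = \sup_M \hat u - \hat u \geq 0$, testing $\Delta \psi = -\Delta \hat u$ against $e^{\beta \psi}$ and integrating by parts on the closed K\"ahler manifold (no boundary or torsion corrections) produces the standard Moser recurrence between $L^\beta$ and $L^{\beta n / (n - 1)}$ norms of $e^\psi$, which after iteration reduces matters to an $L^{\beta_0}$ bound on $e^{\beta_0 \psi}$ for a suitable $\beta_0$.

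The $L^{\beta_0}$ bound is supplied by Tian's $\alpha$-invariant of the K\"ahler class $[\chi]$: since $\chi_{\hat u} = \chi_u > 0$, the function $\hat u - \sup_M \hat u$ is $\chi$-plurisubharmonic with zero supremum, so $\int_M e^{-\beta_0 (\hat u - \sup_M \hat u)} \omega^n \leq C$ for any $\beta_0 < \alpha(M, [\chi])$, which is exactly $\int_M e^{\beta_0 \psi} \omega^n \leq C$.  The main obstacle will be arranging the iteration so that the constants cooperate: the exponent $A$ from the $C^2$ estimate enters multiplicatively in the Moser recurrence, forcing $\beta_0$ to be chosen larger than some multiple of $A$ to absorb it, while the $\alpha$-invariant requires $\beta_0 < \alpha(M, [\chi])$.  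If this window is empty, one has to either sharpen the $C^2$ estimate (reducing $A$) or replace the $\alpha$-invariant input by a Kolodziej-style capacity argument.  The three K\"ahler-specific inputs used here---the cohomological identity giving $J(\hat u) = 0$, the torsion-free integration by parts, and the $\alpha$-invariant---are precisely what fails in the Hermitian setting, consistent with the lemma's placement under the K\"ahler hypothesis.
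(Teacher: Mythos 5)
Your lower bound is correct and is essentially the paper's argument: the normalization forces $J(\hat u)=0$ (via the K\"ahler cohomological identity $\int_M\chi_u^{n-\alpha}\wedge\omega^\alpha=\int_M\chi^{n-\alpha}\wedge\omega^\alpha$), and writing $J(\hat u)$ along the line path as an integral of $\hat u$ against positive measures shows $\hat u$ cannot be everywhere negative, so $\sup_M\hat u\geq 0$ (and symmetrically $\inf_M\hat u\leq 0$).

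The upper bound is where the proposal breaks down. First, your reduction relies on the identity $\hat u-\inf_M\hat u = u-\inf_{M\times[0,t]}u$, which is false: since $u$ and $\hat u$ differ by a function of $t$ alone, one only gets $\hat u(x,t)-\inf_M\hat u(\cdot,t)=u(x,t)-\inf_M u(\cdot,t)$, and $\inf_M u(\cdot,t)$ need not equal $\inf_{M\times[0,t]}u$; arranging for these to coincide (at a well-chosen sequence of times, using monotonicity of $J$ along the flow) is precisely the delicate step in the proof of Theorem~\ref{long:theorem-kahler-c0}, which comes \emph{after} this lemma. Second, you are trying to prove the full oscillation bound $\sup_M\hat u-\inf_M\hat u\leq C$, which is the content of Theorem~\ref{long:theorem-kahler-c0} and uses Lemma~\ref{long:lemma-kahler} as an input; your Moser iteration cannot close here because the exponent $A(\hat u-\inf_M\hat u)$ in the second-order bound involves exactly the oscillation you are trying to control, and you yourself flag that the required window for $\beta_0$ (above a multiple of $A$, below the $\alpha$-invariant) may be empty. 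The paper's actual proof of the upper bound needs none of this machinery: it rewrites $J(\hat u)=0$ as in \eqref{long:lemma-kahler-proof-3} to express $\sum_\alpha\frac{b_\alpha}{n-\alpha+1}\int_M\hat u\,\chi^{n-\alpha}\wedge\omega^\alpha$ in terms of the mixed terms containing $\chi_{\hat u}$, chooses $C$ with $\omega^n\leq C\sum_\alpha\frac{b_\alpha}{n-\alpha+1}\chi^{n-\alpha}\wedge\omega^\alpha$, and deduces $\int_M\hat u\,\omega^n\leq\inf_M\hat u\cdot\bigl(\int_M\omega^n-C\sum_\alpha b_\alpha\int_M\chi^{n-\alpha}\wedge\omega^\alpha\bigr)$; the conclusion then follows from the elementary Green's-function estimate $\sup_M\hat u\leq\frac{1}{V}\int_M\hat u\,\omega^n+C_0$, valid because $\Delta\hat u\geq-\mathrm{tr}\,\chi$ (this is the step attributed to \cite{Yau78}). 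You should replace the Moser-iteration scheme by this integral manipulation.
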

\begin{proof}

By \eqref{long:J-new-property-1},
\begin{equation}
\label{long:lemma-kahler-proof-1}
	J (\hat u (T)) = \int^T_0 \int_M \frac{\p \hat u}{\p t} \Big( \sum^n_{\a = 1}  b_\a \chi^{n - \a}_u \wedge \omega^\a \Big) dt = 0.
\end{equation}
Together with \eqref{long:J-property-1}, we have
\begin{equation}
\label{long:lemma-kahler-proof-2}
	\sum^n_{\a = 1} \frac{b_\a}{n - \a + 1} \sum^{n - \a}_{i = 0} \int_M \hat u \chi^i_{\hat u} \wedge \chi^{n - \a - i} \wedge \omega^\a = 0.
\end{equation}
The first inequality in \eqref{long:lemma-kahler-inequality} follows from \eqref{long:lemma-kahler-proof-2}.

Rewriting \eqref{long:lemma-kahler-proof-2},
\begin{equation}	
\label{long:lemma-kahler-proof-3}
	\sum^n_{\a = 1} \frac{b_\a}{n - \a + 1}\int_M \hat u \chi^{n - \a } \wedge \omega^\a = - \sum^{n - 1}_{\a = 1} \frac{b_\a}{n - \a + 1} \sum^{n - \a}_{i = 1} \int_M \hat u \chi^i_{\hat u} \wedge \chi^{n - \a - i} \wedge \omega^\a .
\end{equation}
Let $C$ be a positive constant satisfying
\begin{equation}
\label{long:lemma-kahler-proof-4}
	\omega^n \leq C \ \sum^n_{\a = 1} \frac{b_\a}{n - \a + 1} \chi^{n - \a } \wedge \omega^\a .
\end{equation}
Then
\begin{equation}
\label{long:lemma-kahler-proof-5}
\begin{aligned}
	\int_M \hat u \omega^n &= \int_M \Big(\hat u - \inf_M \hat u\Big) \omega^n + \int_M \inf_M \hat u \omega^n \\
	&\leq C \sum^n_{\a = 1} \frac{b_\a}{n - \a + 1} \int_M \Big(\hat u - \inf_M \hat u\Big) \chi^{n - \a} \wedge \omega^\a + \int_M \inf_M \hat u \omega^n \\
	&= C \sum^n_{\a = 1} \frac{b_\a}{n - \a + 1} \int_M \hat u \chi^{n - \a} \wedge \omega^\a \\
	&\qquad + \inf_M  \hat u \Bigg( \int_M \omega^n -  C \sum^n_{\a = 1} \frac{b_\a}{n - \a + 1} \int_M   \chi^{n - \a} \wedge \omega^\a \Bigg) .
\end{aligned}
\end{equation}
Substituting \eqref{long:lemma-kahler-proof-3} into \eqref{long:lemma-kahler-proof-5},
\begin{equation}
\label{long:lemma-kahler-proof-6}
\begin{aligned}
	\int_M \hat u \omega^n &\leq - C \sum^{n - 1}_{\a = 1} \frac{b_\a}{n - \a + 1} \sum^{n - \a}_{i = 1} \int_M \hat u \chi^i_{\hat u} \wedge \chi^{n - \a - i} \wedge \omega^\a \\
	&\qquad + \inf_M  \hat u \Bigg( \int_M \omega^n -  C \sum^n_{\a = 1} \frac{b_\a}{n - \a + 1} \int_M   \chi^{n - \a} \wedge \omega^\a \Bigg) \\
	&\leq
	\inf_M \hat u \Bigg(\int_M \omega^n - C \sum^n_{\a = 1} b_\a \int_M \chi^{n - \a} \wedge \omega^\a\Bigg).
\end{aligned}
\end{equation}
As shown in \cite{Yau78}, it implies the second inequality in \eqref{long:lemma-kahler-inequality}.

\end{proof}

It remains to prove Theorem~\ref{long:theorem-kahler-c0}.

\begin{proof}[Proof of Theorem~\ref{long:theorem-kahler-c0}]
By Lemma~\ref{long:lemma-kahler}, it is sufficient to show a lower bound for $\inf_M \hat u(x,t)$. If such a lower bound does not exist, then we can choose a sequence $t_i \rightarrow \infty$ such that
\begin{equation}
\label{long:theorem-kahler-proof-1}
	\inf_M \hat u(x,t_i) = \inf_{t \in [0,t_i]} \inf_M \hat u(x,t) ,
\end{equation}
and
\begin{equation}
\label{long:theorem-kahler-proof-2}
	\inf_M \hat u(x,t_i) \rightarrow - \infty .
\end{equation}
By \eqref{long:theorem-kahler-proof-1}, for any $t \leq t_i$,
\begin{equation}
\label{long:theorem-kahler-proof-3}
\begin{aligned}
	&\, \inf_M u (x,t_i) - \inf_M u(x,t) \\
	=&\, \inf_M \hat u (x,t_i) - \inf_M \hat u(x,t) +  \frac{J (u(x,t_i)) - J (u(x,t)) }{\sum^n_{\a = 1} b_\a \int_M   \chi^{n - \a} \wedge \omega^\a  } \\
	\leq&\, \frac{J (u(x,t_i)) - J (u(x,t)) }{\sum^n_{\a = 1} b_\a \int_M   \chi^{n - \a} \wedge \omega^\a  }  .
\end{aligned}
\end{equation}
Since $J (u)$ is decreasing along the solution flow, $\inf_M u (x,t_i) \leq \inf_M u(x,t) $. Together with \eqref{long:J-normalization-property-1}, we have
\begin{equation}
\label{long:theorem-kahler-proof-4}
	\inf_M u(x,t_i) = \inf_{t\in[0,t_i]}\inf_M u(x,t) ,
\end{equation}
and
\begin{equation}
\label{long:theorem-kahler-proof-5}
	\inf_M u(x,t_i) \rightarrow - \infty .
\end{equation}
By theorem~\ref{C2:theorem-C2-estimate},
\begin{equation}
\begin{aligned}
	w(x,t_i) &\leq C e^{A (u(x,t_i) - \inf_{M \times [0,t_i]} u)} \\
	&= C e^{A (u(x,t_i) - \inf_M u(x,t_i))}.
\end{aligned}
\end{equation}
As shown in \cite{TWv10a}, it follows that
\begin{equation}
	\sup_M u(x,t_i) - \inf_M u(x,t_i) \leq C
\end{equation}
for some positive constant $C$, which contradicts our assumption.
\end{proof}

Following Theorem~\ref{long:theorem-kahler-c0}, we can show that the $C^2$ estimate is also independent on time $t$.
\begin{corollary}
\label{long:corollary-kahler-c2}
Under the assumption of Theorem~\ref{introduction:theorem-kahler}, there exists a uniform constant $C$ such that
\begin{equation}
	w(x,t) \leq C .
\end{equation}

\end{corollary}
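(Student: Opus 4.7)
The plan is to mimic the proof of Corollary~\ref{long:corollary-hermitian-c2} but to replace the pointwise monotonicity $\p_t u \leq 0$ (which held in the Hermitian case and was used to identify $\inf_{M\times[0,t]} u$ with $\inf_M u(\cdot,t)$) by the monotonicity of the $J$-functional. The output of Theorem~\ref{C2:theorem-C2-estimate} is
\[
	w(x,t) \leq C e^{A(u(x,t) - \inf_{M\times[0,t]} u)},
\]
so it suffices to bound $u(x,t) - \inf_{M\times[0,t]} u$ uniformly in $t$. Writing
\[
	u(x,t) - \inf_{M\times[0,t]} u = \bigl(u(x,t) - \inf_M u(\cdot,t)\bigr) + \bigl(\inf_M u(\cdot,t) - \inf_{M\times[0,t]} u\bigr),
\]
the first term is already controlled by Theorem~\ref{long:theorem-kahler-c0}, so everything reduces to showing that the second term is uniformly bounded, i.e.\ that the spatial infimum of $u$ cannot be much larger than the global spacetime infimum up to time $t$.

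The first step is to upgrade Theorem~\ref{long:theorem-kahler-c0} from an oscillation bound for $u$ to a genuine two-sided bound for the normalized potential $\hat u$ defined in \eqref{long:J-normalization}. Since $\hat u - u$ is a function of $t$ only, the oscillation bound $\sup_M u(\cdot,t) - \inf_M u(\cdot,t) \leq C$ passes to $\hat u$. Combined with Lemma~\ref{long:lemma-kahler}, which gives $0 \leq \sup_M \hat u \leq -C_1 \inf_M \hat u + C_2$, this forces $\inf_M \hat u \geq -C$ and $\sup_M \hat u \leq C_1 C + C_2$, so $\hat u$ is uniformly bounded on $M \times [0,\infty)$.

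The second step exploits that $u = \hat u + K^{-1} J(u)$, where $K := \sum_\alpha b_\alpha \int_M \chi^{n-\alpha}\wedge \omega^\alpha$, and that $t \mapsto J(u(\cdot,t))$ is non-increasing along the flow by \eqref{long:J-solution-property-1}. Hence for any $0 \leq s \leq t$,
\[
	\inf_M u(\cdot,s) - \inf_M u(\cdot,t) = \bigl(\inf_M \hat u(\cdot,s) - \inf_M \hat u(\cdot,t)\bigr) + K^{-1}\bigl(J(u(s)) - J(u(t))\bigr) \geq -C',
\]
where the first difference is bounded by the uniform bound on $\hat u$ and the second is non-negative. Taking the infimum over $s \in [0,t]$ yields $\inf_{M\times[0,t]} u \geq \inf_M u(\cdot,t) - C'$, which is the substitute for \eqref{long:theorem-hermitian-c0-bound-3}.

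The final step is a direct substitution: combining the two estimates,
\[
	w(x,t) \leq C e^{A(u(x,t) - \inf_M u(\cdot,t) + C')} \leq C e^{A(C + C')},
\]
using Theorem~\ref{long:theorem-kahler-c0} in the last step. The only non-routine point is the verification of uniform boundedness (not just oscillation-boundedness) of $\hat u$; once Lemma~\ref{long:lemma-kahler} is invoked this is immediate, and the rest of the argument is essentially bookkeeping around the monotonicity of $J$.
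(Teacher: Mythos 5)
Your proposal is correct and follows essentially the same route as the paper: both arguments write $u = \hat u + J(u)/\sum_\alpha b_\alpha\int_M\chi^{n-\alpha}\wedge\omega^\alpha$, use the monotonicity of $J$ along the flow to compare $\inf_{M\times[0,t]}u$ with $\inf_M u(\cdot,t)$, and conclude via the uniform two-sided bound on $\hat u$ already established in the K\"ahler $C^0$ estimate. The only cosmetic difference is that the paper evaluates $u(x,t)-\inf_M u(\cdot,t_0)$ directly at the time $t_0$ where the spacetime infimum is attained, whereas you split off the spatial oscillation first; the ingredients are identical.
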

\begin{proof}

There exists $t_0 \in [0,t]$ such that $\inf_M u (x,t_0) = \inf_{M\times [0,t]} u (x,s)$, and hence
\begin{equation}
\begin{aligned}
	u (x,t) - \inf_{M \times [0,t]} u(x,s) &= u (x,t) - \inf_M u(x,t_0) \\
	&= \hat u(x,t) - \inf_M u(x,t_0) +  \frac{ J (u(x,t)) }{\sum^n_{\a = 1} b_\a \int_M   \chi^{n - \a} \wedge \omega^\a  } \\
	&= \hat u(x,t) - \inf_M \hat u(x,t_0) + \frac{J (u(x,t)) - J (u(x,t_0)) }{\sum^n_{\a = 1} b_\a \int_M   \chi^{n - \a} \wedge \omega^\a  } \\
	&\leq \hat u(x,t) - \inf_M \hat u(x,t_0) .
\end{aligned}
\end{equation}
Together with theorem~\ref{C2:theorem-C2-estimate},
\begin{equation}
	w (x,t) \leq C e^{A (u - \inf_{M \times [0,t]} u)} \leq  C e^{A (\hat u(x,t) - \inf_M \hat u(x,t_0))}.
\end{equation}
Then the conclusion follows from the fact that $\hat u$ is uniformly bounded on $M \times [0,\infty)$, which is in the proof of Theorem~\ref{long:theorem-kahler-c0}.

\end{proof}

\bigskip

\section{The gradient estimate}
\label{gradient}
\setcounter{equation}{0}
\medskip

Although $C^0$ estimate and partial $C^2$ estimate can lead to a gradient estimate, we provide a different argument following \cite{GSun12, SunDissertation}.

\begin{theorem}
\label{gradient:theorem-gradient-estimate}
Let $u \in C^4 (M\times [0, T))$ be an admissible solution to equation~\eqref{introduction:parabolic-flow-equation} and $w = \Delta u + tr\chi$. Suppose that
\begin{equation}
	\mu (x,t) = u (x,t) + H(t), \qquad \frac{dH}{dt} \geq 0.
\end{equation} 
Then there are uniform constants $C$ and $A$ such that
\begin{equation}
	|\nabla u |^2 \leq C e^{A E_2} ,\qquad E_2 = e^{\sup_{M\times [0,t] }(\mu - v) - \inf_{M \times [0,t] } (\mu - v)} -  e^{ \sup_{M\times [0,t]} (\mu - v) - (\mu - v)},
\end{equation}
where $C$, $A$ depend only on geometric data.

\end{theorem}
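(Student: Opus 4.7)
The plan is to apply a parabolic maximum principle to the auxiliary function
\[
Q(x,t) := \ln |\nabla u|^2 + A\bigl(e^{S-(\mu-v)} - e^{S-I}\bigr),
\]
where $S = \sup_{M\times[0,t]}(\mu-v)$, $I = \inf_{M\times[0,t]}(\mu-v)$, and $A$ is a large geometric constant. Note that $Q = \ln|\nabla u|^2 - AE_2$ with $E_2 \ge 0$, so a uniform upper bound $Q(p,t_0) \le C_0$ at a maximum point yields $|\nabla u|^2(x,t) \le e^{C_0} e^{AE_2(x,t)}$ everywhere, which is the claimed estimate.

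At a maximum point $(p,t_0)$ of $Q$ on $M \times [0,T')$ I would choose normal coordinates around $p$ and apply the linearized parabolic operator $\mathcal L := \partial_t - F^{i\bar i}\partial_i\bpartial_i$, with $F^{i\bar i} := \sum_\alpha \frac{c_\alpha}{\sum_\beta c_\beta S_\beta}\,S_{\alpha-1;i}(X^{i\bar i})^2$, to each piece of $Q$. Using \eqref{preliminary:equation-1st-derivative} and commuting third covariant derivatives via the Chern--Ricci identity \eqref{preliminary:formula-X-1}, a standard computation yields
\[
\mathcal L|\nabla u|^2 \le -\epsilon F^{i\bar i}\bigl(|u_{ki}|^2 + |u_{k\bar i}|^2\bigr) + C_1 |\nabla u|^2\sum_i F^{i\bar i} + C_1 |\nabla u|^2,
\]
the Hermitian torsion cross-terms being absorbed into the good quadratic by the Phong--Sturm perfect-square trick already deployed in Section~\ref{C2}. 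Writing $\phi := e^{S-(\mu-v)}$, one computes $\mathcal L\phi = -\phi\,\mathcal L(\mu-v) - \phi\,F^{i\bar i}|\partial_i(\mu-v)|^2$. Since $\partial_t\mu = \partial_t u + H'(t) \ge \partial_t u$, Lemma~\ref{lemma-inequality} gives $\mathcal L(\mu-v) \ge \theta\bigl(1 + \sum_i F^{i\bar i}\bigr)$ whenever $w \ge N$ at $(p,t_0)$; the complementary case $w < N$ is handled directly since a bounded $C^2$ norm controls $|\nabla u|^2$ via a comparison against $\chi_u > 0$. The critical point relation $\partial_k Q = 0$ rewrites $\partial_k|\nabla u|^2/|\nabla u|^2 = A\phi\,\partial_k(\mu-v)$, so the positive Hessian-of-logarithm piece $F^{i\bar i}\bigl|\partial_i|\nabla u|^2\bigr|^2/|\nabla u|^4$ appearing in $\mathcal L\ln|\nabla u|^2 = \mathcal L|\nabla u|^2/|\nabla u|^2 + F^{i\bar i}\bigl|\partial_i|\nabla u|^2\bigr|^2/|\nabla u|^4$ is $A^2\phi^2 F^{i\bar i}|\partial_i(\mu-v)|^2$ at $(p,t_0)$.

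Assembling these ingredients into $0 \le \mathcal L Q$ at the maximum produces an inequality of the schematic form
\[
A\phi\theta\Bigl(1 + \sum_i F^{i\bar i}\Bigr) + \epsilon\,\frac{F^{i\bar i}(|u_{ki}|^2 + |u_{k\bar i}|^2)}{|\nabla u|^2} \le C_1\Bigl(1 + \sum_i F^{i\bar i}\Bigr) + (A^2\phi^2 - A\phi)\,F^{i\bar i}|\partial_i(\mu-v)|^2,
\]
and the main obstacle is the quadratic-in-$A$ term $A^2\phi^2 F^{i\bar i}|\partial_i(\mu-v)|^2$ on the right: the auxiliary term contributes only the linear subtractive $-A\phi F^{i\bar i}|\partial_i(\mu-v)|^2$. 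The remedy is to re-substitute the critical point relation $A\phi\,\partial_k(\mu-v) = \partial_k|\nabla u|^2/|\nabla u|^2$, expand $\partial_k|\nabla u|^2 = \sum_l(u_{lk}u_{\bar l} + u_l u_{\bar l k})$, and invoke Cauchy--Schwarz to dominate $A^2\phi^2|\partial_i(\mu-v)|^2$ by a bounded multiple of $(\sum_l |u_{li}|^2 + |u_{l\bar i}|^2 + |\nabla u|^2 )/|\nabla u|^2$, which absorbs the obstruction into the good quadratic and an additional harmless $\sum_i F^{i\bar i}$ contribution. Because $\phi \ge 1$ automatically (as $\mu-v \le S$), choosing $A$ large enough that $A\theta$ dominates all accumulated constants forces $|\nabla u|^2(p,t_0) \le C$, and hence $Q(p,t_0)$ is uniformly bounded, completing the argument.
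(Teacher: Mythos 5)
Your test function is, up to a spatial constant, the logarithm of the one used in Section~\ref{gradient} ($\ln(|\nabla u|^2e^{Ae^{\eta}})$ with $\eta=\sup_{M\times[0,t]}(\mu-v)-(\mu-v)$), and you correctly isolate the central obstruction, namely $F^{i\bar i}\big|\p_i|\nabla u|^2\big|^2/|\nabla u|^4=A^2\phi^2F^{i\bar i}|\p_i(\mu-v)|^2$. But your proposed resolution of it fails. Expanding $\p_i|\nabla u|^2=\sum_l(u_lu_{i\bar l}+u_{li}u_{\bar l})$ and applying Cauchy--Schwarz gives $\big|\p_i|\nabla u|^2\big|^2\le 2|\nabla u|^2\big(\sum_l|u_{i\bar l}|^2+\sum_l|u_{li}|^2\big)$: the constant is $2$, whereas the good quadratic $F^{i\bar i}(|u_{ki}|^2+|u_{k\bar i}|^2)/|\nabla u|^2$ produced by $\bpartial_i\p_i|\nabla u|^2$ carries coefficient at most $1$ (less after the torsion is absorbed). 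This factor-of-two loss is exactly the classical difficulty in gradient estimates for such equations, and it is why the paper does not argue this way. Instead it uses the \emph{exact} critical-point identity \eqref{gradient:test-derivative-8}, $\big|\p_i|\nabla u|^2\big|^2=\big|\sum_ku_{ki}u_{\bar k}\big|^2-\big|\sum_ku_ku_{\bar ki}\big|^2-2|\nabla u|^2\,\mathfrak{Re}\{\sum_ku_ku_{\bar ki}\bpartial_i\phi\}$, so that only the pure-Hessian square (coefficient exactly $1$, matched exactly by the good term) needs absorbing; the remaining cross term is \emph{linear} in $\sum_ku_ku_{\bar ki}=u_iX_{i\bar i}-\sum_ku_k\chi_{i\bar k}$, and the factor $X_{i\bar i}$ converts the weight $S_{\a-1;i}(X^{i\bar i})^2$ into $S_{\a-1;i}X^{i\bar i}$, whose normalized sum is $\le n$, bounding the cross term by $4n$ plus a piece absorbed by half of the convexity term $\phi|\nabla u|^2F^{i\bar i}|\eta_i|^2$. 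The residual constant $4n$ is then beaten by a further case analysis (on whether $\sum_\a\frac{c_\a}{\sum c_\a S_\a}S_{\a-1;i}(X^{i\bar i})^2$ is large for some $i$), in which the $|\eta_i|^2$ term --- not Lemma~\ref{lemma-inequality} alone --- supplies the coercivity. None of this structure is present in your sketch, and without it the maximum-principle inequality does not close.

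The second gap is your treatment of the case $w<N$: a pointwise bound on $w=\Delta u+tr\chi$ at the maximum of $Q$ does not control $|\nabla u|^2$ there. Interpolating $C^1$ between $C^0$ and $C^2$ would require a uniform $C^0$ bound, which is not available at this stage (the whole point of the theorem is a gradient bound depending only on the oscillation of $\mu-v$), and is in any case a global rather than pointwise statement. The paper handles this case by noting that $w\le N$ forces $X^{i\bar i}\ge 1/N$ for all $i$, so the coefficient of $|\nabla\eta|^2$ in the convexity term is uniformly positive (inequality \eqref{gradient:theorem-proof-17}); this bounds $|\nabla\eta|$ and hence $|\nabla u|\le|\nabla\eta|+|\nabla v|$. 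As written, therefore, your argument has a genuine gap in both branches of the dichotomy.
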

\begin{proof}

Suppose that the function $ |\nabla u|^2 e^\phi$ attains its maximum at $(p,t_0) \in M_t = M \times (0,t]$. As previous, we choose a normal coordinate chart around $p$. Without loss of generality, we may assume $|\nabla u| \gg 1$ and $|\nabla u| > |\nabla v|$ at $(p,t_0)$. Thus we have the following relations at $(p,t_0)$,
\begin{equation}
\label{gradient:test-derivative-1}
	\frac{\p_i (|\nabla u|^2)}{|\nabla u|^2} + \p_i \phi = 0 ,
\end{equation}
\begin{equation}
\label{gradient:test-derivative-2}
	\frac{\bpartial_i (|\nabla u|^2)}{|\nabla u|^2} + \bpartial_i \phi = 0 ,
\end{equation}
\begin{equation}
\label{gradient:test-derivative-3}
	\frac{\p_t (|\nabla u|^2)}{|\nabla u|^2} + \p_t \phi \geq 0 ,
\end{equation}
and
\begin{equation}
\label{gradient:test-derivative-4}
	\frac{\bpartial_i\p_i |\nabla u|^2}{|\nabla u|^2} - \frac{\p_i (|\nabla u|^2) \bpartial_i (|\nabla u|^2)}{|\nabla u|^4} + \bpartial_i\p_i \phi \leq 0.
\end{equation}
Direct computation shows that,
\begin{equation}
\label{gradient:test-derivative-5}
	\p_i (|\nabla u|^2) = \sum_k (u_k u_{i\bar k} + u_{ki} u_{\bar k}),
\end{equation}
\begin{equation}
\label{gradient:test-derivative-6}
	\p_t (|\nabla u|^2) = \sum_k (\p_t u_k u_{\bar k} + u_k \p_t u_{\bar k}),
\end{equation}
and
\begin{equation}
\label{gradient:test-derivative-7}
\begin{aligned}
	\bpartial_i \p_i (|\nabla u|^2) =&\, \sum_k (u_{ki} u_{\bar k\bar i} + u_{i\bar ik} u_{\bar k} + u_{i\bar i\bar k} u_k) + \sum_{k,l} R_{i\bar i k\bar l} u_l u_{\bar k} \\
	&\, \sum_k \Big|u_{\bar ki} - \sum_l T^k_{il} u_{\bar l}\Big|^2 - \sum_k \Big|\sum_l T^k_{il} u_{\bar l}\Big|^2 .
\end{aligned}
\end{equation}
By \eqref{gradient:test-derivative-1} and \eqref{gradient:test-derivative-6},
\begin{equation}
\label{gradient:test-derivative-8}
\begin{aligned}
	|\p_i(|\nabla u|^2)|^2 
	&= \Big|\sum_k u_{ki} u_{\bar k}\Big|^2 - 2 |\nabla u|^2 \mathfrak{Re} \Big\{\sum_k u_k u_{\bar ki} \bpartial_i\phi\Big\} - \Big|\sum_k u_k u_{\bar ki}\Big|^2 . 
\end{aligned}
\end{equation}
By \eqref{gradient:test-derivative-7} and Schwarz inequality,
\begin{equation}
\label{gradient:theorem-proof-1}
\begin{aligned}
	&\, \sum_i S_{\a - 1;i} (X^{i\bar i})^ 2 \bpartial_i \p_i (|\nabla u|^2) \\
	\geq&\, \frac{1}{|\nabla u|^2} \sum_i S_{\a - 1;i} (X^{i\bar i})^2 \Big|\sum_k u_{ki} u_{\bar k}\Big|^2 - \sum_k \p_k S_\a u_{\bar k} - \sum_k \bpartial_k S_\a u_k \\
	&\, - \sum_{i,k} S_{\a - 1;i} (X^{i\bar i})^2 \chi_{i\bar ik} u_{\bar k} - \sum_{i,k} S_{\a - 1;i} (X^{i\bar i})^2 \chi_{i\bar i\bar k} u_k \\
	&\, + \sum_{i,k,l} S_{\a - 1;i} (X^{i\bar i})^ 2 R_{i\bar i k\bar l} u_l u_{\bar k}  - \sum_{i,k} S_{\a - 1;i} (X^{i\bar i})^ 2  \Big|\sum_l T^k_{il} u_{\bar l}\Big|^2 \\
	\geq&\, \frac{1}{|\nabla u|^2} \sum_i S_{\a - 1;i} (X^{i\bar i})^2 \Big|\sum_k u_{ki} u_{\bar k}\Big|^2 - \sum_k \p_k S_\a u_{\bar k} - \sum_k \bpartial_k S_\a u_k \\
	&\, - C_2 |\nabla u|^2 \sum_{i} S_{\a - 1;i} (X^{i\bar i})^2  - C_2 |\nabla u| \sum_{i} S_{\a - 1;i} (X^{i\bar i})^2 ,
\end{aligned}
\end{equation}
for some uniform constant $C_2 > 0$; by \eqref{gradient:test-derivative-8}, 
\begin{equation}
\label{gradient:theorem-proof-2}
\begin{aligned}
	&\, \sum_i S_{\a - 1;i} (X^{i\bar i})^2 |\p_i (|\nabla u|^2)|^2 \\
	\leq&\,   \sum_i S_{\a - 1;i} (X^{i\bar i})^2 \Big|\sum_k u_{ki} u_{\bar k}\Big|^2 - 2 |\nabla u|^2 \sum_i S_{\a - 1;i} (X^{i\bar i})^2 \mathfrak{Re} \Big\{\sum_k u_k u_{\bar ki} \bpartial_i\phi\Big\} .
\end{aligned}
\end{equation}
Substituting \eqref{gradient:theorem-proof-1} and \eqref{gradient:theorem-proof-2} into \eqref{gradient:test-derivative-4},
\begin{equation}
\label{gradient:theorem-proof-3}
\begin{aligned}
	&  |\nabla u|^2 \sum_i S_{\a - 1;i} (X^{i\bar i})^2\bpartial_i\p_i \phi +  2 \sum_i S_{\a - 1;i} (X^{i\bar i})^2 \mathfrak{Re} \Big\{\sum_k u_k u_{\bar ki} \bpartial_i\phi\Big\}  \\
	 \leq&\,  \sum_k \p_k S_\a u_{\bar k} +  \sum_k \bpartial_k S_\a u_k + C_2 ( |\nabla u|^2 + |\nabla u|) \sum_{i} S_{\a - 1;i} (X^{i\bar i})^2 .
\end{aligned}
\end{equation}

Let $\phi = A e^\eta$ , where $\eta = v - \mu + \sup_{M\times [0,t]} (\mu - v)\geq 0$ and $A$ is to be determined. It is easy to see that
\begin{equation}
\label{gradient:phi-derivative-1}
	\p_t \phi = \phi \p_t \eta =  - \phi \p_t \mu,
\end{equation}
\begin{equation}
\label{gradient:phi-derivative-2}
	\p_i \phi = \phi \eta_i ,
\end{equation}
\begin{equation}
\label{gradient:phi-derivative-2-conjugate}
	\bpartial_i \phi = \phi \eta_{\bar i} ,
\end{equation}
and
\begin{equation}
\label{gradient:phi-derivative-3}
	\bpartial_i\p_i \phi = \phi (\eta_i \eta_{\bar i} + \eta_{i\bar i}).
\end{equation}
Then from \eqref{gradient:theorem-proof-3},
\begin{equation}
\label{gradient:theorem-proof-4}
\begin{aligned}
	&\,  |\nabla u|^2 \phi \sum_i S_{\a - 1;i} (X^{i\bar i})^2 (\eta_i \eta_{\bar i} + \eta_{i\bar i}) +  2 \phi \sum_i S_{\a - 1;i} (X^{i\bar i})^2 \mathfrak{Re} \Big\{\sum_k u_k u_{\bar ki} \eta_{\bar i}\Big\}  \\
	\leq&\,  \sum_k \p_k S_\a u_{\bar k} +  \sum_k \bpartial_k S_\a u_k + C_2 (|\nabla u|^2  + |\nabla u)|\sum_{i} S_{\a - 1;i} (X^{i\bar i})^2 .
\end{aligned}
\end{equation}
Controlling the second term,
\begin{equation}
\label{gradient:theorem-proof-5}
\begin{aligned}
	&\, 2 \phi \sum_i S_{\a - 1;i} (X^{i\bar i})^2 \mathfrak{Re} \Big\{\sum_k u_k u_{\bar ki} \eta_{\bar i}\Big\} \\
	=&\, 2 \phi \sum_i S_{\a - 1;i} X^{i\bar i} \mathfrak{Re} \Big\{u_i \eta_{\bar i}\Big\} - 2 \phi \sum_i S_{\a - 1;i} (X^{i\bar i})^2 \mathfrak{Re} \Big\{\sum_k u_k \chi_{\bar ki} \eta_{\bar i}\Big\} \\
	\geq&\, 2 \phi \sum_i S_{\a - 1;i} X^{i\bar i} \mathfrak{Re} \Big\{u_i \eta_{\bar i}\Big\} - \frac{\phi}{2} |\nabla u|^2 \sum_i S_{\a - 1;i} (X^{i\bar i})^2 |\eta_i|^2 \\
	&\hspace{12em} - C_3 \phi \sum_i S_{\a - 1;i} (X^{i\bar i})^2 .
\end{aligned}
\end{equation}
Substituting \eqref{gradient:theorem-proof-5} into \eqref{gradient:theorem-proof-4},
\begin{equation}
\label{gradient:theorem-proof-6}
\begin{aligned}
	0 \leq&\,  \sum_k \p_k S_\a u_{\bar k} +  \sum_k \bpartial_k S_\a u_k + \phi |\nabla u|^2 \sum_i S_{\a - 1;i} (X^{i\bar i})^2 (u_{i\bar i} - v_{i\bar i}) \\
	&\, -   2 \phi \sum_i S_{\a - 1;i} X^{i\bar i} \mathfrak{Re} \Big\{u_i \eta_{\bar i}\Big\} - \frac{\phi}{2} |\nabla u|^2 \sum_i S_{\a - 1;i} (X^{i\bar i})^2 |\eta_i|^2 \\
	&\, + C_2 (|\nabla u|^2 + |\nabla u|)\sum_{i} S_{\a - 1;i} (X^{i\bar i})^2 + C_3 \phi \sum_i S_{\a - 1;i} (X^{i\bar i})^2 .
\end{aligned}
\end{equation}
Noting that 
\begin{equation}
\label{gradient:test-derivative-9}
	\p_t (|\nabla u|^2) \leq C_4 |\nabla u| -  \sum_k \p_k \Big(\ln\sum^n_{\a = 1} c_\a S_\a \Big)  u_{\bar k} -  \sum_k \bpartial_k \Big(\ln\sum^n_{\a = 1} c_\a S_\a \Big) u_k,
\end{equation}
then
by \eqref{gradient:test-derivative-3} and \eqref{gradient:phi-derivative-1},
\begin{equation}
\label{gradient:test-derivative-10}
\begin{aligned}
	 \sum_k \p_k \Big(\ln\sum^n_{\a = 1} c_\a S_\a \Big)  u_{\bar k} +  \sum_k \bpartial_k \Big(\ln\sum^n_{\a = 1} c_\a S_\a \Big) u_k 
	 &\leq  C_4 |\nabla u| + |\nabla u|^2 \p_t \phi \\
	 &\leq C_4 |\nabla u| - \phi |\nabla u|^2 \p_t u  .
\end{aligned}
\end{equation}
Putting together \eqref{gradient:theorem-proof-6} and \eqref{gradient:test-derivative-10},
\begin{equation}
\label{gradient:theorem-proof-7}
\begin{aligned}
	0 \leq&\, \ln \Big(\psi\sum^n_{\a = 1} c_\a S_\a \Big) +  \sum^n_{\a = 1}\frac{c_\a}{\sum^n_{\a = 1} c_\a S_\a} \sum_i S_{\a - 1;i} (X^{i\bar i})^2 (u_{i\bar i} - v_{i\bar i}) \\
	&\qquad - \frac{2}{|\nabla u|^2}  \sum^n_{\a = 1}\frac{c_\a}{\sum^n_{\a = 1} c_\a S_\a}  \sum_i S_{\a - 1;i} X^{i\bar i}  \mathfrak{Re} \Big\{u_i \eta_{\bar i}\Big\} \\
	&\qquad - \frac{1}{2}  \sum^n_{\a = 1}\frac{c_\a}{\sum^n_{\a = 1} c_\a S_\a}  \sum_i S_{\a - 1;i} (X^{i\bar i})^2 |\eta_i|^2  \\
	&\qquad + \frac{C_2 ( |\nabla u| + 1)}{\phi |\nabla u|} \sum^n_{\a = 1}\frac{c_\a}{\sum^n_{\a = 1} c_\a S_\a} \sum_{i} S_{\a - 1;i} (X^{i\bar i})^2   \\
	&\qquad + \frac{C_3}{|\nabla u|^2} \sum^n_{\a = 1}\frac{c_\a}{\sum^n_{\a = 1} c_\a S_\a} \sum_i S_{\a - 1;i} (X^{i\bar i})^2 + \frac{C_4}{\phi |\nabla u| }.
\end{aligned}
\end{equation}
By the fact that $\phi \geq A$,
\begin{equation}
\label{gradient:theorem-proof-8}
\begin{aligned}
	0 \leq&\, \ln \Big(\psi\sum^n_{\a = 1} c_\a S_\a \Big) +  \sum^n_{\a = 1}\frac{c_\a}{\sum^n_{\a = 1} c_\a S_\a} \sum_i S_{\a - 1;i} (X^{i\bar i})^2 (u_{i\bar i} - v_{i\bar i}) \\
	&\, - \frac{2}{|\nabla u|^2}  \sum^n_{\a = 1}\frac{c_\a}{\sum^n_{\a = 1} c_\a S_\a}  \sum_i S_{\a - 1;i} X^{i\bar i}  \mathfrak{Re} \Big\{u_i \eta_{\bar i}\Big\} \\
	&\,  - \frac{1}{2}  \sum^n_{\a = 1}\frac{c_\a}{\sum^n_{\a = 1} c_\a S_\a}  \sum_i S_{\a - 1;i} (X^{i\bar i})^2 |\eta_i|^2  + \frac{C_4}{A |\nabla u| } \\
	&\, + C_5\Bigg(\frac{1}{A} + \frac{1}{A|\nabla u|} + \frac{1}{|\nabla u|^2}\Bigg) \sum^n_{\a = 1}\frac{c_\a}{\sum^n_{\a = 1} c_\a S_\a} \sum_{i} S_{\a - 1;i} (X^{i\bar i})^2 .
\end{aligned}
\end{equation}
Since we assume $|\nabla u| > |\nabla v|$,
\begin{equation}
\label{gradient:theorem-proof-9}
\begin{aligned}
	- \frac{2}{|\nabla u|^2}  \sum^n_{\a = 1}\frac{c_\a}{\sum^n_{\a = 1} c_\a S_\a}  \sum_i S_{\a - 1;i} X^{i\bar i} \mathfrak{Re} \Big\{u_i \eta_{\bar i}\Big\} \\
	\leq  4 \sum^n_{\a = 1} \frac{c_\a}{\sum^n_{\a = 1} c_\a S_\a}  \sum_i S_{\a - 1;i} X^{i\bar i} \leq 4n .
\end{aligned}
\end{equation}

Case 1. $w > N$, where $N$ is the crucial constant in Lemma~\ref{lemma-inequality}. We have
\begin{equation}
\label{gradient:theorem-proof-10}
\begin{aligned}
	&\,   \theta + \theta \sum^n_{\a = 1} \frac{c_\a}{\sum^n_{\a = 1} c_\a S_\a} \sum_i S_{\a - 1;i} (X^{i\bar i})^2 \\
	&\, + \frac{1}{2}  \sum^n_{\a = 1}\frac{c_\a}{\sum^n_{\a = 1} c_\a S_\a}  \sum_i S_{\a - 1;i} (X^{i\bar i})^2 |\eta_i|^2\\
	\leq&\,  - \frac{2}{|\nabla u|^2}  \sum^n_{\a = 1}\frac{c_\a}{\sum^n_{\a = 1} c_\a S_\a}  \sum_i S_{\a - 1;i} X^{i\bar i}  \mathfrak{Re} \Big\{u_i \eta_{\bar i}\Big\}  + \frac{C_4}{A |\nabla u| }\\
	&\, + C_5\Bigg(\frac{1}{A} + \frac{1}{A|\nabla u|} + \frac{1}{|\nabla u|^2}\Bigg) \sum^n_{\a = 1}\frac{c_\a}{\sum^n_{\a = 1} c_\a S_\a} \sum_{i} S_{\a - 1;i} (X^{i\bar i})^2 .
\end{aligned}
\end{equation}

If $\sum^n_{\a = 1}\frac{c_\a}{\sum^n_{\a = 1} c_\a S_\a}  S_{\a - 1;i} (X^{i\bar i})^2 \geq K$ for some $i$ and $K > 0$,
\begin{equation}
\label{gradient:theorem-proof-11}
\begin{aligned}
	&\,   \theta + \frac{\theta}{2} \sum^n_{\a = 1} \frac{c_\a}{\sum^n_{\a = 1} c_\a S_\a} \sum_i S_{\a - 1;i} (X^{i\bar i})^2 +  \frac{\theta K}{2}\\
	\leq&\, 4n + \frac{C_4}{A |\nabla u| } + C_5\Bigg(\frac{1}{A} + \frac{1}{A|\nabla u|} + \frac{1}{|\nabla u|^2}\Bigg) \sum^n_{\a = 1}\frac{c_\a}{\sum^n_{\a = 1} c_\a S_\a} \sum_{i} S_{\a - 1;i} (X^{i\bar i})^2 .
\end{aligned}
\end{equation}
Choosing $\theta$ satisfying $\theta K > 8n$,
\begin{equation}
\label{gradient:theorem-proof-12}
\begin{aligned}
	&\,   \theta + \frac{\theta}{2} \sum^n_{\a = 1} \frac{c_\a}{\sum^n_{\a = 1} c_\a S_\a} \sum_i S_{\a - 1;i} (X^{i\bar i})^2 \\
	\leq&\, \frac{C_4}{A |\nabla u| } + C_5\Bigg(\frac{1}{A} + \frac{1}{A|\nabla u|} + \frac{1}{|\nabla u|^2}\Bigg) \sum^n_{\a = 1}\frac{c_\a}{\sum^n_{\a = 1} c_\a S_\a} \sum_{i} S_{\a - 1;i} (X^{i\bar i})^2 .
\end{aligned}
\end{equation}
When $A$ is large enough, we derive a bound $|\nabla u| \leq C$.

If $\sum^n_{\a = 1}\frac{c_\a}{\sum^n_{\a = 1} c_\a S_\a}  S_{\a - 1;i} (X^{i\bar i})^2 \leq K$ for all $i$, 
then
\begin{equation}
\label{gradient:theorem-proof-13}
\begin{aligned}
	\sum^n_{\a = 1} \frac{c_\a \a S_\a S_1}{\sum^n_{\a = 1} c_\a S_\a} \leq n^2 K ,
\end{aligned}
\end{equation}
and hence
\begin{equation}
\label{gradient:theorem-proof-14}
	S_1 \leq n^2 K .
\end{equation}
By Schwarz inequality and \eqref{gradient:theorem-proof-14},
\begin{equation}
\label{gradient:theorem-proof-15}
\begin{aligned}
	&\, 
	- \frac{2}{|\nabla u|^2}  \sum_i S_{\a - 1;i} X^{i\bar i}  \mathfrak{Re} \Big\{u_i \eta_{\bar i}\Big\} \\
	\leq&\,
	\frac{4(n - \a + 1)}{|\nabla u|^2} S_{\a - 1} + \frac{1}{4}  \sum_i S_{\a - 1;i} (X^{i\bar i})^2 |\eta_i|^2 .
\end{aligned}
\end{equation}
Substituting \eqref{gradient:theorem-proof-15} into \eqref{gradient:theorem-proof-10},
\begin{equation}
\label{gradient:theorem-proof-16}
\begin{aligned}
	&\,   \theta + \theta \sum^n_{\a = 1} \frac{c_\a}{\sum^n_{\a = 1} c_\a S_\a} \sum_i S_{\a - 1;i} (X^{i\bar i})^2 \\
	\leq&\,   C_5\Bigg(\frac{1}{A} + \frac{1}{A|\nabla u|} + \frac{1}{|\nabla u|^2}\Bigg) \sum^n_{\a = 1}\frac{c_\a}{\sum^n_{\a = 1} c_\a S_\a} \sum_{i} S_{\a - 1;i} (X^{i\bar i})^2 \\
	&\,\hspace{6em}  + \frac{C_6}{|\nabla u|^2} + \frac{C_4}{A |\nabla u| }  .
\end{aligned}
\end{equation}
This leads to a bound for $|\nabla u|$ if $A$ is chosen large enough.

Case 2. $w \leq N$. We have
\begin{equation}
\label{gradient:theorem-proof-17}
	\sum_i S_{\a - 1;i} (X^{i\bar i})^2 |\eta_i|^2 \geq |\nabla \eta|^2 \min_i S_{\a - 1;i} (X^{i\bar i})^2 \geq \frac{C^{\a - 1}_{n - 1}}{w^{\a + 1}} |\nabla \eta|^2 \geq \frac{C^{\a - 1}_{n - 1}}{N^{\a + 1}} |\nabla \eta|^2 .
\end{equation}
Substituting \eqref{gradient:theorem-proof-9} and \eqref{gradient:theorem-proof-17} into \eqref{gradient:theorem-proof-8},
\begin{equation}
\label{gradient:theorem-proof-18}
\begin{aligned}
	\Bigg(\epsilon - \frac{C_5}{A} - \frac{C_5}{A|\nabla u|} - \frac{C_5}{|\nabla u|^2} \Bigg) \sum^n_{\a = 1} \frac{c_\a}{\sum^n_{\a = 1} c_\a S_\a} \sum_i S_{\a - 1;i} (X^{i\bar i})^2 \\
	\leq \ln \Big(\psi\sum^n_{\a = 1} c_\a S_\a \Big)   + 5n  + \frac{C_4}{A |\nabla u| } - \epsilon_1 |\nabla \eta|^2 .
\end{aligned}
\end{equation}
Choosing $A$ sufficiently large and noting the assumption that $|\nabla u| \gg 1$,
\begin{equation}
\label{gradient:theorem-proof-19}
	\epsilon_1 |\nabla \eta|^2 + \Bigg(\frac{\epsilon}{2} - \frac{C_5}{|\nabla u|^2} \Bigg) \sum^n_{\a = 1} \frac{c_\a}{\sum^n_{\a = 1} c_\a S_\a} \sum_i S_{\a - 1;i} (X^{i\bar i})^2 \leq C_7  .
\end{equation}
When $|\nabla u|$ is larger than $\sqrt{2 C_5 / \epsilon}$, we derive a bound for $|\nabla \eta|$. But
\begin{equation}
	|\nabla u| = |\nabla \mu| \leq |\nabla \eta | + |\nabla v| .
\end{equation}

\end{proof}

Applying appropriate $H(t)$, we can obtain time-independent gradient estimate. 
For Hermitian manifolds, we have the following corollary.
\begin{corollary}
\label{gradient:corollary-hermitian}
Under the assumption of Theorem~\ref{introduction:theorem-hermitian}, there exists a uniform constant $C$ such that
\begin{equation}
	|\nabla u|^2 \leq C.
\end{equation}

\end{corollary}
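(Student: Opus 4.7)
The plan is to apply Theorem~\ref{gradient:theorem-gradient-estimate} with a judicious choice of $H(t)$ that makes the quantity $E_2$ uniformly bounded in $t$, thereby upgrading the pointwise gradient estimate to a time-independent one.

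First, I would recall two ingredients specific to the Hermitian case. From assumption~\eqref{introduction:theorem-hermitian-condition} and the maximum principle, as used at the start of the proof of Theorem~\ref{long:theorem-hermitian-c0}, one has $\partial_t u \leq 0$, so in particular the map $t \mapsto \inf_M u(\cdot,t)$ is non-increasing. Theorem~\ref{long:theorem-hermitian-c0} further provides the uniform oscillation bound
\[
	\sup_M u(\cdot,t) - \inf_M u(\cdot,t) \leq C_0
\]
for some constant $C_0$ independent of $t$.

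With these in hand, I would set $H(t) := -\inf_M u(\cdot,t)$. The monotonicity just noted gives $dH/dt \geq 0$, so the hypothesis of Theorem~\ref{gradient:theorem-gradient-estimate} is satisfied (if smoothness of $H$ is demanded, one may precompose with a standard monotonicity-preserving mollification in $t$; since $\partial_t u$ is uniformly bounded by \eqref{preliminary:maximum-1}, $H$ is Lipschitz and this is harmless). With this choice $\mu := u + H$ satisfies $\inf_M \mu(\cdot,t) = 0$, and the oscillation bound above gives $0 \leq \mu(x,t) \leq C_0$ uniformly on $M \times [0,\infty)$. Since $v \in C^2(M)$ is a fixed background function, $\mu - v$ is then uniformly bounded on $M \times [0,\infty)$, so both exponents $\sup_{M\times [0,t]}(\mu-v) - \inf_{M\times [0,t]}(\mu-v)$ and $\sup_{M\times[0,t]}(\mu-v) - (\mu - v)$ are bounded by a constant independent of $t$; hence $E_2 \leq C_1$ for a uniform $C_1$.

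Theorem~\ref{gradient:theorem-gradient-estimate} then immediately yields $|\nabla u|^2 \leq C e^{A C_1}$, which is the desired uniform gradient estimate. There is no genuine obstacle at this stage: all the real work has already been absorbed into the $C^0$ oscillation bound of Theorem~\ref{long:theorem-hermitian-c0} and the pointwise gradient inequality of Theorem~\ref{gradient:theorem-gradient-estimate}. The only technical point is the regularity of $H(t)$, which is resolved by the monotone mollification remark above.
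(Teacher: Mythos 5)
Your proof is correct and takes essentially the same route as the paper: apply Theorem~\ref{gradient:theorem-gradient-estimate} with a nondecreasing $H(t)$ chosen so that $\mu=u+H$ is uniformly bounded via $\partial_t u\le 0$ and the oscillation bound of Theorem~\ref{long:theorem-hermitian-c0}. The only difference is that the paper takes $H(t)=-\int_M u\,\omega^n/\int_M\omega^n$ (so $\mu=\tilde u$), which is automatically smooth in $t$, whereas your choice $H(t)=-\inf_M u(\cdot,t)$ needs the (harmless) mollification remark you already supply.
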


\begin{proof}
Setting $\mu = \tilde u$,
\begin{equation}
	H(t) = - \frac{\int_M u \omega^n}{\int_M \omega^n}
\end{equation}
and thus
\begin{equation}
	\frac{dH}{dt} = - \frac{\int_M \p_t u \omega^n}{\int_M \omega^n} \geq 0.
\end{equation}
By theorem~\ref{gradient:theorem-gradient-estimate}, there are constants $C$ and $A$ such that
\begin{equation}
	|\nabla u |^2 \leq C e^{A E_2} ,\qquad E_2 = e^{\sup_{M\times [0,t] }(\tilde u - v) - \inf_{M \times [0,t] } (\tilde u  - v)} -  e^{ \sup_{M\times [0,t]} (\tilde u - v) - (\tilde u - v)}.
\end{equation}
Theorem~\ref{long:theorem-hermitian-c0} implies that $\tilde u$ is uniformly bounded, and then we obtain the bound for $|\nabla u|^2$.

\end{proof}

For K\"ahler manifolds, we have the following corollary.
\begin{corollary}
\label{gradient:corollary-kahler}
Under the assumption of Theorem~\ref{introduction:theorem-kahler}, there exists a uniform constant $C$ such that
\begin{equation}
	|\nabla u|^2 \leq C.
\end{equation}

\end{corollary}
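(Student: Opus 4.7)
The plan is to parallel the Hermitian argument of Corollary~\ref{gradient:corollary-hermitian}, but with the normalization $\tilde u$ replaced by the $J$-normalization $\hat u$ from~\eqref{long:J-normalization}. Concretely, I would apply Theorem~\ref{gradient:theorem-gradient-estimate} with
\[
	\mu (x,t) = \hat u (x,t) = u(x,t) + H(t), \qquad H(t) := - \frac{J(u(\cdot,t))}{\sum^n_{\a = 1} b_\a \int_M \chi^{n - \a} \wedge \omega^\a }.
\]
Since $H$ depends only on $t$, we have $|\nabla u| = |\nabla \mu|$ and $\mu - v = \hat u - v$, so the conclusion of Theorem~\ref{gradient:theorem-gradient-estimate} will directly give a bound on $|\nabla u|^2$ in terms of the oscillation of $\hat u - v$.

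First I would verify the monotonicity hypothesis $\frac{d H}{d t} \geq 0$. Under the K\"ahler assumption of Theorem~\ref{introduction:theorem-kahler} we have $\psi \geq c$, so along the flow the computation~\eqref{long:J-new-property-2} yields~\eqref{long:J-solution-property-1}, namely $\frac{d}{d t} J(u) \leq 0$. Combined with the sign in the definition of $H$, this is exactly $\frac{dH}{dt} \geq 0$, as required by Theorem~\ref{gradient:theorem-gradient-estimate}.

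Next, Theorem~\ref{gradient:theorem-gradient-estimate} provides uniform constants $C$ and $A$ with
\[
	|\nabla u|^2 \leq C \, e^{A E_2}, \qquad E_2 = e^{\sup_{M\times [0,t] }(\hat u - v) - \inf_{M \times [0,t] } (\hat u  - v)} -  e^{ \sup_{M\times [0,t]} (\hat u - v) - (\hat u - v)},
\]
so it suffices to bound the spacetime oscillation of $\hat u$. Here I would invoke the fact, already established in the proof of Theorem~\ref{long:theorem-kahler-c0} and used again in Corollary~\ref{long:corollary-kahler-c2}, that $\hat u$ is uniformly bounded on $M \times [0,\infty)$: Lemma~\ref{long:lemma-kahler} controls $\sup_M \hat u$ in terms of $-\inf_M \hat u$, while the contradiction argument of Theorem~\ref{long:theorem-kahler-c0} rules out $\inf_M \hat u \to -\infty$. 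Since $v$ is a fixed $C^2$ function, the oscillation of $\hat u - v$ is likewise uniformly bounded, $E_2$ is uniformly bounded, and the gradient estimate follows.

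The main technical point to watch is ensuring the right choice of shift $H(t)$: the Hermitian shift $-\int_M u\,\omega^n /\int_M \omega^n$ relied on the sign $\p_t u \leq 0$ coming from~\eqref{introduction:theorem-hermitian-condition}, which need not hold in the K\"ahler setting, so replacing the mean by the $J$-value is essential to get the monotonicity $dH/dt \geq 0$. Once that substitution is made, the uniform boundedness of $\hat u$ does the remaining work.
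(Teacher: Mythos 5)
Your proposal is correct and follows essentially the same route as the paper: the same choice $\mu = \hat u$ with $H(t) = -J(u)/\sum_\a b_\a \int_M \chi^{n-\a}\wedge\omega^\a$, the same verification that $dH/dt \geq 0$ from the monotonicity of $J$ under $\psi \geq c$, and the same appeal to the uniform boundedness of $\hat u$ from Lemma~\ref{long:lemma-kahler} and Theorem~\ref{long:theorem-kahler-c0} to control $E_2$. No gaps.
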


\begin{proof}
Setting $\mu = \hat u$,
\begin{equation}
	H(t) = - \frac{J(u)}{\sum^n_{\a = 1} b_\a \int_M \chi^{n - \a} \wedge \omega^\a}
\end{equation}
and thus
\begin{equation}
	\frac{dH}{dt} = - \frac{\int_M \p_t u \Big(\sum^n_{\a = 1} b_\a \chi^{n - \a}_u \wedge \omega^\a\Big)}{\sum^n_{\a = 1} b_\a \int_M \chi^{n - \a} \wedge \omega^\a} \geq 0.
\end{equation}
By theorem~\ref{gradient:theorem-gradient-estimate}, there are constants $C$ and $A$ such that
\begin{equation}
	|\nabla u |^2 \leq C e^{A E_2} ,\qquad E_2 = e^{\sup_{M\times [0,t] }(\hat u - v) - \inf_{M \times [0,t] } (\hat u  - v)} -  e^{ \sup_{M\times [0,t]} (\hat u - v) - (\hat u - v)}.
\end{equation}
By \eqref{long:lemma-kahler-proof-2} in the proof of Lemma~\ref{long:lemma-kahler}, we have
\begin{equation}
	\inf_M \hat u(x,t) \leq 0 \leq \sup_M \hat u(x,t).
\end{equation} 
Then Theorem~\ref{long:theorem-kahler-c0} implies that $\tilde u$ is uniformly bounded, and consequently we obtain the bound for $|\nabla u|^2$.

\end{proof}

\bigskip

\section{Convergence of the parabolic flow}
\label{convergence}
\setcounter{equation}{0}
\medskip

In this section, we prove the convergence of the normalized solution. The arguments of Gill~\cite{Gill11} can be applied verbatim here, so we just give a sketch.

Let $\varphi$ be a positive function on $M \times (0,\infty)$ such that
\begin{equation}
	\frac{\p\varphi}{\p t} = \sum_{i,j} G^{i\bar j} \bpartial_j\p_i \varphi,
\end{equation}
where
\begin{equation}
	G^{i\bar j} = \frac{\p}{\p u_{i\bar j}} \Bigg(\ln \frac{S_n ( \chi_u )}{\sum^n_{\a = 1} c_\a S_{n - \a}(\chi_u)}\Bigg) .
\end{equation}
At a point $p$ and under a normal coordinate chart, $\{G^{i\bar j}\}$ is diagonal and
\begin{equation}
	G^{i\bar i} = \sum^n_{\a = 1}\frac{c_\a}{\sum^n_{\a = 1} c_\a S_\a} \sum_i S_{\a - 1;i} (X^{i\bar i})^2 .
\end{equation}
We have the following Harnack inequality.
\begin{lemma}
\label{convergence:lemma-harnack}
For all $0 < t_1 < t_2$,
\begin{equation}
	\sup_{x\in M} \varphi (x,t_1) \leq \inf_{x\in M} \varphi(x,t_2) \Big(\frac{t_2}{t_1}\Big)^{C_2} e^{\frac{C_3}{t_2 - t_1} + C_1(t_2 - t_1)}
\end{equation}
\end{lemma}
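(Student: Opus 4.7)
The plan is to treat Lemma~\ref{convergence:lemma-harnack} as a standard application of the classical parabolic Harnack inequality for linear uniformly parabolic equations in non-divergence form, following the template of Gill~\cite{Gill11}. The equation $\p_t\varphi = G^{i\bar j}\bpartial_j\p_i\varphi$ is linear in $\varphi$ with no zeroth or first order terms in $\varphi$, and $\{G^{i\bar j}\}$ is smooth and Hermitian positive definite, so once uniform parabolicity is established the result is a black-box invocation.

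The first step is to verify that the eigenvalues of $\{G^{i\bar j}\}$ are bounded above and below uniformly in space and time. At a point where $X_{i\bar j}$ is diagonalized in a normal frame, the expression
\[
G^{i\bar i} = \sum^n_{\a = 1}\frac{c_\a}{\sum^n_{\a = 1} c_\a S_\a} S_{\a - 1;i} (X^{i\bar i})^2
\]
shows that it suffices to produce two-sided bounds on the eigenvalues $\lambda_i$ of $X$. An upper bound on $\lambda_i$ follows from the partial $C^2$ estimate (Corollary~\ref{long:corollary-hermitian-c2} or its K\"ahler analog Corollary~\ref{long:corollary-kahler-c2}), since $X>0$ forces $\lambda_i \leq w$. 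For the lower bound, I would use the maximum principle bound \eqref{preliminary:maximum-1} on $\p_t u$: rewriting the flow as \eqref{preliminary:parabplic-equation-equivalent-1} gives
\[
\sum^n_{\a = 1}c_\a S_\a(X^{-1}) = \psi^{-1}e^{-\p_t u},
\]
so this quantity is uniformly bounded in both directions. Since $\sum_\a b_\a > 0$ and the individual $S_\a(X^{-1})$ blow up whenever any $\lambda_i \to 0$, this forces $\lambda_{\min}(X)$ to stay bounded away from zero uniformly. Consequently $\{G^{i\bar j}\}$ is uniformly elliptic with bounds depending only on the a priori estimates.

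With uniform parabolicity at hand, the parabolic Harnack inequality of Krylov--Safonov type on the compact Riemannian manifold $(M,\omega)$ (see Gill~\cite{Gill11}) applied to the positive solution $\varphi$ yields exactly the stated estimate, with constants $C_1, C_2, C_3$ depending only on the ellipticity constants of $G^{i\bar j}$, the dimension $n$, and the background geometry of $(M,\omega)$. The three factors arise in the usual way: $(t_2/t_1)^{C_2}$ from parabolic rescaling, $e^{C_3/(t_2-t_1)}$ from the Gaussian-type short time behavior, and $e^{C_1(t_2-t_1)}$ from the lower order drift generated by expressing $G^{i\bar j}\bpartial_j\p_i\varphi$ in a fixed coordinate frame via covariant corrections. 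The main obstacle is the uniform ellipticity estimate, especially the lower bound on $\lambda_{\min}(X)$; once this is secured from the a priori bounds already established, the Harnack inequality itself requires no new ideas and follows Gill's argument verbatim.
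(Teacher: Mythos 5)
Your proposal is correct and follows essentially the same route as the paper, which states the lemma without proof and defers entirely to Gill's arguments; your verification of uniform parabolicity (upper eigenvalue bound on $X$ from the time-independent $C^2$ estimate, lower bound on $\lambda_{\min}(X)$ from the two-sided bound \eqref{preliminary:maximum-3} on $\sum_\a c_\a S_\a(X^{-1})$ combined with that upper bound) is precisely the prerequisite the paper leaves implicit. One small point of attribution: the Harnack inequality with this specific constant $(t_2/t_1)^{C_2}e^{C_3/(t_2-t_1)+C_1(t_2-t_1)}$ is obtained in Gill (following Cao and Li--Yau) by a differential Harnack estimate integrated along space-time paths rather than by Krylov--Safonov, though both mechanisms require only the uniform ellipticity and coefficient bounds you establish.
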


Now we apply the argument of Cao~\cite{Cao85}. Define $\varphi = \p_t u$. Let $m$ be a positive integer and define
\begin{equation}
\begin{aligned}
	\varphi'_m (x,t) = \sup_{y\in M} \varphi (y, m - 1) - \varphi(x, m - 1 + t),\\
	\varphi''_m(x,t) = \varphi (x, m - 1 + t) - \inf_{y\in M} \varphi(y, m - 1).
\end{aligned}
\end{equation}
By \eqref{preliminary:equation-time-derivative}, we have
\begin{equation}
\begin{aligned}
	\frac{\p \varphi'_m}{\p t} = \sum_{i,j} G^{i\bar j}(m - 1 + t) \bpartial_j\p_i \varphi'_m ,\\
	\frac{\p \varphi''_m}{\p t}  = \sum_{i,j} G^{i\bar j}(m - 1 + t) \bpartial_j\p_i \varphi''_t .
\end{aligned}
\end{equation}
If $\varphi(x, m - 1)$ is not a constant function, $\varphi'$ and $\varphi''$ must be positive functions on $M \times (0,\infty)$ by the maximum principle. Applying Lemma~\ref{convergence:lemma-harnack} with $t_1 = \frac{1}{2}$ and $t_2 = 1$, we have
\begin{equation}
	O (m - 1) + O \Big(m - \frac{1}{2}\Big) \leq C (O (m - 1) - O (m)),
\end{equation}
where $O (t) = \sup_M \varphi (x,t) - \inf_M \varphi (x,t)$. Therefore $O (t) \leq C e^{- c_0 t}$ for $c_0 > 0$. On the other hand, this inequality is also true if $\varphi(x, m - 1)$ is constant.

For $(x,t) \in M \times [0,\infty)$, there is a point $y \in M$ such that $\p_t \tilde u = 0$ since $\int_M \p_t \tilde u \omega^n = 0$. Therefore,
\begin{equation}
\label{convergence:solution-normalization-decay}
	|\p_t \tilde u (x,t)| \leq C e^{- c_0 t} ,
\end{equation}
and consequently
\begin{equation}
	\p_t Q \leq 0 ,
\end{equation}
where $Q = \tilde u + \frac{C}{c_0} e^{- c_0 t}$.
Since $Q$ is bounded and decreasing,
\begin{equation}
	\lim_{t\rightarrow \infty} \tilde u = \lim_{t\rightarrow \infty} Q = \tilde u_\infty .
\end{equation}
By contradiction argument, we can prove that the convergence of $\tilde u$ to $\tilde u_\infty$ is actually $C^\infty$.

Note that $\tilde u$ satisfies
\begin{equation}
\label{convergence:solution-normalization-flow}
	\frac{\p \tilde u}{\p t} + \frac{\int_M \p_t u \omega^n}{\int_M \omega^n} = \ln \frac{\chi^n_{\tilde u}}{ \sum^n_{\a = 1} b_\a \chi^{n - \a}_{\tilde u} \wedge \omega^\a } - \ln \psi.
\end{equation}
Letting $t\rightarrow\infty$, the right side of \eqref{convergence:solution-normalization-flow} tells us that it converges. Then \eqref{convergence:solution-normalization-decay} implies that it converges to a constant $b$. This completes the proof of the convergence.

\bigskip

\section{Generalized \texorpdfstring{$J$}{J}-flow in K\"ahler geometry}
\label{J}
\setcounter{equation}{0}
\medskip

In this section, we just show some key steps of the proof, that is, the second order estimate and the uniform estimate. The remaining parts follow from the previous sections.

Rewriting \eqref{introduction:parabolic-flow-equation-J},
\begin{equation}
\label{J:parabolic-flow-equation-J-1}
	\frac{\p u }{\p t} = \frac{1}{c} -  \sum^n_{\a = 1} c_\a S_\a  .
\end{equation}
Since $\omega$ is K\"ahler,
\begin{equation}
\label{J:formula-X-1}
\begin{aligned}
     	X_{i\bar ij\bar j} - X_{j\bar ji\bar i} &= R_{j\bar ji\bar i}X_{i\bar i}  -  R_{i\bar ij\bar j}X_{j\bar j} - G_{i\bar ij\bar j},
\end{aligned}
\end{equation}
where
\begin{equation}
\label{J:formula-G-coefficient}
\begin{aligned}
    	G_{i\bar ij\bar j} &= \chi_{j\bar ji\bar i} - \chi_{i\bar ij\bar j} + \sum_p R_{j\bar ji\bar p}\chi_{p\bar i} -\sum_p R_{i\bar ij\bar p}\chi_{p\bar j} .
\end{aligned}
\end{equation}
Differentiating the equation at $p$
\begin{equation}
\label{J:equation-time-derivative}
	\p_t (\p_t u) = \sum_i S_{\a - 1;i} (X^{i\bar i})^2 (\p_t u)_{i\bar i} ,
\end{equation}
\begin{equation}
\label{J:equation-1st-derivative}
	\p_t u_l = \sum^n_{\a = 1} c_\a \sum_i S_{\a - 1;i} (X^{i\bar i})^2 X_{i\bar il}  ,
\end{equation}
and
\begin{equation}
\label{J:equation-2nd-derivative}
\begin{aligned}
	\p_t u_{l\bar l}	&\leq 
	\sum^n_{\a = 1} c_\a \Bigg[ \sum_i S_{\a - 1;i} (X^{i\bar i})^2 X_{i\bar il\bar l} - \sum_{i,j} S_{\a - 1;i} (X^{i\bar i})^2 X^{j\bar j} X_{j\bar i\bar l} X_{i\bar jl} \Bigg].
\end{aligned}
\end{equation}

\medskip
\subsection{Second order estimate}

\begin{theorem}
\label{J:theorem-C2-estimate}
Let $u \in C^4 (M\times [0, T))$ be an admissible solution to equation~\eqref{introduction:parabolic-flow-equation-J} and $w = \Delta u + tr\chi$. Then there are uniform constants $C$ and $A$ such that
\begin{equation}
	w \leq C e^{A (u - \inf_{M\times [ 0, t]} u)},
\end{equation}
where $C$, $A$ depend only on geometric data.
\end{theorem}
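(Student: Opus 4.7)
The plan is to mirror the proof of Theorem~\ref{C2:theorem-C2-estimate}, exploiting the simplifications provided by the K\"ahler hypothesis. I would test the quantity $\ln w + \phi$ with the Phong--Sturm auxiliary
\[
\phi = -A(u-v) + \frac{1}{u-v-\inf_{M_t}(u-v)+1},
\]
at its maximum $(p,t_0)\in M_t$, computing in a normal chart at $p$ and assuming $w\gg 1$. Differentiating \eqref{J:parabolic-flow-equation-J-1} twice, summing \eqref{J:equation-2nd-derivative} over $l$, and trading the fourth-order term via the K\"ahler commutator \eqref{J:formula-X-1}, one obtains
\[
\p_t w \le \sum_\a c_\a\!\left[\sum_i S_{\a-1;i}(X^{i\bar i})^2 \bpartial_i\p_i w - \sum_{i,j,l} S_{\a-1;i}(X^{i\bar i})^2 X^{j\bar j} |X_{i\bar j l}|^2\right] + C_1 w\sum_\a c_\a \sum_i S_{\a-1;i}(X^{i\bar i})^2.
\]
The absence of torsion removes the need for the perfect-square construction \eqref{C2:square}: the cross term $|\p_i w|^2/w$ that appears after substituting \eqref{C2:test-derivative-4} can be dominated by $\sum_{j,l}X^{j\bar j}|X_{i\bar j l}|^2$ via Cauchy--Schwarz, using the K\"ahler identity $X_{l\bar l i}=X_{i\bar l l}$ together with $\operatorname{tr}X\le w$.

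Combining this with the max-point conditions \eqref{C2:test-derivative-1}--\eqref{C2:test-derivative-3} and the $\phi$-derivatives \eqref{C2:phi-derivative-1}--\eqref{C2:phi-derivative-3}, the argument reduces to an inequality of the form
\[
0 \le w(A+E_1^2)\!\left[\p_t u + \sum_\a c_\a\sum_i S_{\a-1;i}(X^{i\bar i})^2(u_{i\bar i}-v_{i\bar i})\right] + \Bigl(\tfrac{C_4 A^2}{wE_1^3}+C_3 w\Bigr)\sum_\a c_\a\sum_i S_{\a-1;i}(X^{i\bar i})^2 + C_2,
\]
where $\p_t u = c^{-1}-\sum_\a c_\a S_\a$ is uniformly bounded by the parabolic maximum principle applied to \eqref{J:equation-time-derivative}.

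The essential new input, replacing Lemma~\ref{lemma-inequality}, is a cone-type inequality: for $w\ge N$ there exists $\theta>0$ such that
\[
\sum_\a c_\a \sum_i S_{\a-1;i}(X^{i\bar i})^2(u_{i\bar i}-v_{i\bar i}) \le -\p_t u - \theta - \theta\sum_\a c_\a\sum_i S_{\a-1;i}(X^{i\bar i})^2.
\]
I would prove this by splitting on whether the smallest eigenvalue $X_{n\bar n}$ is bounded below or small, exactly as in the proof of Lemma~\ref{lemma-inequality}: in the small case one absorbs $S_1$ directly into the concavity margin, while in the bounded case one invokes the strict cone condition \eqref{preliminary:cone-condition-equivalent-1} (with $\psi=c$) after replacing $\chi_v$ by $\chi_v-\delta g$ and applying concavity of $-\sum_\a c_\a S_\a$. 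I expect this step to be the main obstacle, since one must extract a strict negativity margin directly from the $J$-flow operator without access to the logarithmic rearrangement used in the original flow. Once this inequality is in hand, the proof closes exactly as in Theorem~\ref{C2:theorem-C2-estimate}: split into $w>AE_1^{-3/2}$ (where choosing $A$ large forces a contradiction with $w\gg 1$) and $w\le AE_1^{-3/2}$ (where $we^{\phi}\le Ae^{2}e^{-A\inf_{M_t}(u-v)}$ directly yields $w\le Ce^{A(u-\inf_{M_t} u)}$).
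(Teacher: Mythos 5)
Your proposal is correct and follows essentially the same route as the paper: maximum principle applied to $we^{\phi}$, the K\"ahler commutator \eqref{J:formula-X-1} plus a direct Cauchy--Schwarz bound of $\sum_i S_{\a-1;i}(X^{i\bar i})^2|\p_i w|^2/w$ by $\sum_{i,j,l}S_{\a-1;i}(X^{i\bar i})^2X^{j\bar j}X_{i\bar jl}X_{j\bar i\bar l}$ in place of the perfect square, and an analogue of Lemma~\ref{lemma-inequality} for the operator $-\sum_\a c_\a S_\a$ (which is concave, so the same two-case argument goes through). The only difference is that you retain the Phong--Sturm term $E_1$ in $\phi$, which the paper drops since without torsion there is no cross term to absorb; keeping it is harmless but the simpler choice $\phi=-A(u-v)$ already yields a uniform bound on $w$ at the maximum point once $A>C_1/\theta$.
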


\begin{proof}
We consider the function $w e^\phi$ where $\phi$, where
\begin{equation}
\label{J:C2-definition-phi}
	\phi := - A (u - v)  .
\end{equation}
and A is to determined. Suppose that $\ln w e^\phi$ achieves its maximum at some point $(p,t_0) \in M_t = M \times (0,t]$. Without loss of generality, we may assume that $w \gg 1$. Choose a local chart around $p$ such that $g_{i\bar j} = \delta_{ij}$ and $X_{i\bar j}$ is diagonal at $p$ when $t = t_0$. Therefore, we have at the point $(p,t_0)$,
\begin{equation}
\label{J:C2-test-derivative-1}
	\frac{\p_l w}{w} + \p_l \phi = 0 ,
\end{equation}
\begin{equation}
\label{J:C2-test-derivative-2}
	\frac{\bpartial_l w}{w} + \bpartial_l \phi = 0 ,
\end{equation}
\begin{equation}
\label{J:C2-test-derivative-3}
	\frac{\p_t w}{w} + \p_t \phi \geq 0 ,
\end{equation}
and
\begin{equation}
\label{J:C2-test-derivative-4}
	\frac{\bpartial_l\p_l w}{w} - \frac{|\p_l w |^2 }{w^2} + \bpartial_l\p_l \phi \leq 0 .
\end{equation}

Note that
\begin{equation}
\label{J:C2-square-1}
\begin{aligned}
	 \sum_i S_{\a - 1;i} (X^{i\bar i})^2 \frac{|\p_i w|^2}{w} \leq \sum_{i,j,l} S_{\a - 1;i} (X^{i\bar i})^2 X^{j\bar j} X_{i\bar jl} X_{j\bar i\bar l} .
\end{aligned}
\end{equation}
By \eqref{J:formula-X-1}, \eqref{J:C2-square-1} and \eqref{C2:inequality-1},
\begin{equation}
\label{J:C2-inequality-1}
\begin{aligned}
	&\, \sum_{i,j,l} S_{\a - 1;i} (X^{i\bar i})^2 X^{j\bar j} X_{j\bar i\bar l} X_{i\bar jl} - \sum_{i,l} S_{\a - 1;i} (X^{i\bar i})^2 X_{i\bar il\bar l} \\
	\geq&\, w \sum_i S_{\a - 1;i} (X^{i\bar i})^2 \bpartial_i \p_i \phi - C_1 w \sum_{i,l} S_{\a - 1;i} (X^{i\bar i})^2 - C_2 \frac{\a}{n} S_\a.
\end{aligned}
\end{equation}
Combining \eqref{J:equation-2nd-derivative} and \eqref{J:C2-inequality-1},
\begin{equation}
\label{J:C2-inequality-3}
\begin{aligned}
	\p_t w 
	\leq&\, - w \sum^n_{\a = 1}c_\a  \sum_i S_{\a - 1;i} (X^{i\bar i})^2 \bpartial_i \p_i \phi \\
	&\,  + C_1 w \sum^n_{\a = 1}c_\a \sum_{i} S_{\a - 1;i} (X^{i\bar i})^2  + C_2 \sup_M \sum^n_{\a = 1} c_\a S_\a (\chi^{-1}).
\end{aligned}
\end{equation}
By \eqref{J:C2-test-derivative-2} and \eqref{J:C2-test-derivative-3},
\begin{equation}
\label{J:C2-inequality-5}
\begin{aligned}
	w \p_t\phi \geq&\,  w \sum^n_{\a = 1} c_\a   \sum_i S_{\a - 1;i} (X^{i\bar i})^2 \bpartial_i \p_i \phi - C_1 w \sum^n_{\a = 1} c_\a \sum_{i} S_{\a - 1;i} (X^{i\bar i})^2  - C_3 .
\end{aligned}
\end{equation}
Consequently,
\begin{equation}
\label{J:C2-inequality-6}
\begin{aligned}
	&\, A w \Bigg(\sum^n_{\a = 1} c_\a   \sum_i S_{\a - 1;i} (X^{i\bar i})^2 (v_{i\bar i} - u_{i\bar i}) + \frac{1}{c} - \sum^n_{\a = 1} c_\a S_\a\Bigg) \\
	\leq&\, C_1 w \sum^n_{\a = 1} c_\a \sum_{i} S_{\a - 1;i} (X^{i\bar i})^2  + C_3 .
\end{aligned}
\end{equation}

Similar to Lemma~\ref{lemma-inequality}, we can show that there are constants $N , \theta > 0$ such that when $w \geq N$ ,
\begin{equation}
\begin{aligned}
	&\, \sum^n_{\a = 1} c_\a \sum_i S_{\a - 1;i} (X^{i\bar i})^2 (v_{i\bar i} - u_{i\bar i}) -\frac{1}{c} + \sum^n_{\a = 1} c_\a S_\a  \\
	\geq&\, \theta + \theta \sum^n_{\a = 1} c_\a \sum_i S_{\a - 1;i} (X^{i\bar i})^2 .
\end{aligned}
\end{equation}
If $w \geq N$, from \eqref{J:C2-inequality-6}
\begin{equation}
\label{J:C2-inequality-7}
\begin{aligned}
	&\, A w \Bigg(\theta + \theta \sum^n_{\a = 1} c_\a \sum_i S_{\a - 1;i} (X^{i\bar i})^2\Bigg) \\
	\leq&\, C_1 w \sum^n_{\a = 1} c_\a \sum_{i} S_{\a - 1;i} (X^{i\bar i})^2  + C_3 .
\end{aligned}
\end{equation}
Choosing $A > \frac{C_1}{\theta}$, we derive a bound for $w$.

\end{proof}

\medskip
\subsection{Uniform estimate}

\begin{theorem}
\label{J:theorem-kahler-c0}
Under the assumption of Theorem~\ref{introduction:theorem-kahler-J}, there exists a uniform constant $C$ such that
\begin{equation}
	\sup_M u(x,t) - \inf_M u(x,t) \leq C.
\end{equation}
\end{theorem}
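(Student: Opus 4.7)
The plan is to transplant the argument for Theorem~\ref{long:theorem-kahler-c0} to the $J$-flow setting. Every piece of Section~\ref{long}---the extended functional $J(u)=\sum_\a b_\a J_\a(u)$, the normalized quantity $\hat u = u - J(u)/(\sum_\a b_\a\int_M\chi^{n-\a}\wedge\omega^\a)$, Lemma~\ref{long:lemma-kahler} (which depends only on the identity $J(\hat u)=0$ and $\chi_{\hat u}>0$, both of which are flow-independent), and the final contradiction argument---carries over verbatim. The only genuinely new step is to verify that $J(u)$ is still non-increasing along the new flow~\eqref{introduction:parabolic-flow-equation-J}.

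First I would establish the monotonicity $\tfrac{d}{dt}J(u)\le 0$. Set
\[
f := \frac{\sum_\a b_\a \chi^{n-\a}_u\wedge\omega^\a}{\chi^n_u}, \qquad \p_t u = \frac{1}{c}-f.
\]
Since $\chi$ and $\omega$ are K\"ahler, $\int_M\chi^n_u$ and $\int_M\chi^{n-\a}_u\wedge\omega^\a$ are cohomological invariants, so by the definition~\eqref{introduction:definition-invariant-constant} of $c$,
\[
\int_M f\,\chi^n_u = \sum_\a b_\a \int_M\chi^{n-\a}\wedge\omega^\a = \frac{1}{c}\int_M\chi^n_u.
\]
Cauchy--Schwarz with respect to the probability measure $\chi^n_u/\int_M\chi^n_u$ then gives $\int_M f^2\chi^n_u \ge \tfrac{1}{c^2}\int_M\chi^n_u = \tfrac{1}{c}\int_M f\,\chi^n_u$, and hence
\[
\frac{d}{dt}J(u) = \int_M \p_t u\cdot\Big(\sum_\a b_\a\chi^{n-\a}_u\wedge\omega^\a\Big) = \frac{1}{c}\int_M f\,\chi^n_u - \int_M f^2\,\chi^n_u \le 0.
\]
Since $J(u(\cdot,0))=J(0)=0$, this yields $J(u(\cdot,t))\le 0$, equivalently $u(x,t)\le\hat u(x,t)$, for all $t\ge 0$.

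With this in hand, Lemma~\ref{long:lemma-kahler} applies unchanged: $0\le \sup_M\hat u(x,t)\le -C_1\inf_M\hat u(x,t)+C_2$, so it suffices to bound $\inf_M \hat u$ from below. Arguing exactly as in the proof of Theorem~\ref{long:theorem-kahler-c0}, if no such bound exists, pick $t_i\to\infty$ with $\inf_M\hat u(\cdot,t_i)=\inf_{t\in[0,t_i]}\inf_M\hat u(\cdot,t)\to-\infty$; the monotonicity of $J$ together with $u\le\hat u$ forces $\inf_M u(\cdot,t_i) = \inf_{M\times[0,t_i]}u\to-\infty$ as well. Theorem~\ref{J:theorem-C2-estimate} then gives $w(x,t_i)\le C e^{A(u(x,t_i)-\inf_M u(x,t_i))}$, and the Tosatti--Weinkove argument from~\cite{TWv10a} upgrades this Laplacian bound to $\sup_M u(\cdot,t_i)-\inf_M u(\cdot,t_i)\le C$, contradicting the choice of $t_i$.

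The main obstacle is really the monotonicity of $J$: once the K\"ahler cohomological identities reduce it to Cauchy--Schwarz, nothing else is new. No modification to Lemma~\ref{long:lemma-kahler} or to the contradiction machinery is needed because those arguments never used the specific form of the flow---only that $\chi_u$ stays positive, that the cohomology classes are fixed, and that a $C^2$ estimate of the type in Theorem~\ref{J:theorem-C2-estimate} is available.
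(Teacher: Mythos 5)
Your argument is correct, but it follows a genuinely different (and somewhat longer) route than the paper's. The paper opens its proof by saying ``Different from Section~\ref{long}, we shall use the well known functional $I$'': the point is that the specific normalization $1/c$ in \eqref{introduction:parabolic-flow-equation-J} makes $I=J_0$ \emph{exactly conserved} along this flow, since
\begin{equation*}
\frac{d}{dt}I(u)=\int_M\Big(\tfrac1c-f\Big)\chi^n_u=\tfrac1c\int_M\chi^n-\sum^n_{\a=1}b_\a\int_M\chi^{n-\a}\wedge\omega^\a=0
\end{equation*}
by the definition \eqref{introduction:definition-invariant-constant} of $c$ and the K\"ahler cohomological identities. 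Hence $I(u(t))\equiv 0$, Weinkove's inequality $0\le\sup_M u\le C_1-C_2\inf_M u$ applies to $u$ itself, and the contradiction argument with Theorem~\ref{J:theorem-C2-estimate} runs directly on $u$ with no renormalization and no monotonicity computation. You instead transplant the Section~\ref{long} machinery: you prove $\frac{d}{dt}J(u)\le 0$ by Cauchy--Schwarz (your computation is correct, and is essentially the statement that the flow decreases $\hat J_\chi$), deduce $u\le\hat u$, and then invoke Lemma~\ref{long:lemma-kahler} and the contradiction scheme of Theorem~\ref{long:theorem-kahler-c0}. Your claim that Lemma~\ref{long:lemma-kahler} is flow-independent is right: $J(\hat u)=0$ is an algebraic consequence of the definition of $\hat u$ together with the cocycle property $J(v+a)=J(v)+a\sum_\a b_\a\int_M\chi^{n-\a}\wedge\omega^\a$ for constants $a$, so the lemma and the subsequent Yau-type argument go through verbatim. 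What the paper's route buys is brevity --- a conservation law is stronger than a monotonicity inequality and eliminates $\hat u$ entirely; what your route buys is that it is word-for-word the argument already written for the logarithmic flow, with a single new monotonicity check as the only flow-specific input.
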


\begin{proof}
Different from Section~\ref{long}, we shall use the well known functional $I$ defined by
\begin{equation}
	I = J_0 = \int^1_0 \int_M \frac{\p v}{\p s} \chi^{n }_v ds ,
\end{equation}
for any path $v(s) \subset \mathcal{H}$ connecting $0$ and $u$. Then
\begin{equation}
\label{J:theorem-kahler-proof-I}
\begin{aligned}
	I (u(T)) &= \int^T_0 \int_M \p_t u \chi^n_u dt = 0.
\end{aligned}
\end{equation}
It is shown by Weinkove~\cite{Weinkove06} that equality~\eqref{J:theorem-kahler-proof-I} implies
\begin{equation}
\label{J:theorem-kahler-proof-I-inequality}
	0 \leq \sup_M u(x,t) \leq C_1 - C_2 \inf_M u(x,t).
\end{equation}
Therefore, we only need to prove the lower bound of $u(x,t)$. If such a lower bound does not exist, then we can choose a sequence $t_i \rightarrow \infty$ such that
\begin{equation}
\label{J:theorem-kahler-proof-1}
	\inf_M  u(x,t_i) = \inf_{t \in [0,t_i]} \inf_M  u(x,t) ,
\end{equation}
and
\begin{equation}
\label{J:theorem-kahler-proof-2}
	\inf_M  u(x,t_i) \rightarrow - \infty .
\end{equation}
By theorem~\ref{J:theorem-C2-estimate},
\begin{equation}
\begin{aligned}
	w(x,t_i) &\leq C e^{A (u(x,t_i) - \inf_{M \times [0,t_i]} u)} \\
	&= C e^{A (u(x,t_i) - \inf_M u(x,t_i))}.
\end{aligned}
\end{equation}
As shown in \cite{TWv10a}, it follows that
\begin{equation}
	\sup_M u(x,t_i) - \inf_M u(x,t_i) \leq C
\end{equation}
for some positive constant $C$, which contradicts our assumption.

\end{proof}

\medskip

\subsection{Lower bound and properness}
\label{lower}

We consider
\begin{equation}
\label{long:definition-J-new-normalization}
	\hat J_\chi(u)  := \sum^n_{\a = 1} b_\a J_{\a}(u) - c I(u), \qquad
	c = \frac{\int_M \chi^n}{\sum^n_{\a = 1} b_\a \int_M \chi^{n - \a} \wedge \omega^\a } .
\end{equation}
The generalized $J$-flow is the gradient flow for $\hat J_\chi$.

Choosing a path $\phi(t)$, we have
\begin{equation}
\label{lower:functional-derivative-1}
\begin{aligned}
	\frac{d \hat J_\chi(\phi)}{d t} 
	&= \int_M \dot{\phi} \Bigg( \sum^n_{\a = 1} b_\a \chi^{n - \a}_\phi \wedge \omega^\a - c \chi^n_\phi \Bigg)
\end{aligned}
\end{equation}
and
\begin{equation}
\label{lower:functional-derivative-2}
\begin{aligned}
	\frac{d^2 \hat J_\chi(\phi)}{d t^2}
	&=  \int_M \ddot{\phi} \Bigg( \sum^n_{\a = 1} b_\a \chi^{n - \a}_\phi \wedge \omega^\a - c \chi^n_\phi \Bigg) \\
	& - \frac{1}{2}\int_M \sqrt{-1} \p\dot{\phi}\wedge\bpartial\dot{\phi} \wedge \Bigg(\sum^n_{\a = 1} b_\a (n - \a) \chi^{n - \a - 1}_\phi \wedge \omega^\a - c n \chi^{n - 1}_\phi \Bigg) .
\end{aligned}
\end{equation}

Similar to the work of Mabuchi~\cite{Mabuchi86}, the class of functional $\hat J_\chi$ has 1-cocyle condition in $\mathcal{H}$ by the independence of path. Together with \eqref{lower:functional-derivative-1} and \eqref{lower:functional-derivative-2}, we have the following theorem.
\begin{theorem}
\label{lower:theorem-uniqueness}
There is at most one K\"ahler metric $\chi' \in [\chi]$ such that
\begin{equation}
\label{lower:equation-1}
\sum^n_{\a = 1} b_\a \chi'^{n - \a} \wedge \omega^\a = c \chi'^n .
\end{equation}
In addition, if \eqref{lower:equation-1} holds true for $\chi' = \chi_u$, then $\hat{J}_\chi$ has a minimum value at $u$.
\end{theorem}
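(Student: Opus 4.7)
My plan is to adapt Mabuchi's convexity argument~\cite{Mabuchi86}, combining the 1-cocycle property stated just before the theorem with the first- and second-variation formulas \eqref{lower:functional-derivative-1}--\eqref{lower:functional-derivative-2}. By \eqref{lower:functional-derivative-1}, the critical points of $\hat J_\chi$ on $\mathcal{H}$ are exactly the K\"ahler potentials $u$ solving \eqref{lower:equation-1}. The goal is then to establish convexity of $\hat J_\chi$ along suitable paths in $\mathcal{H}$: two critical points would both minimize a convex function along any connecting path and therefore coincide (as K\"ahler metrics), and any critical point would automatically be a global minimum.

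For $u_0, u_1 \in \mathcal{H}$ I would first try the linear interpolation $\phi(t) = u_0 + t(u_1 - u_0)$, for which $\ddot\phi \equiv 0$. Then \eqref{lower:functional-derivative-2} collapses to
\[
\frac{d^2 \hat J_\chi(\phi)}{dt^2} = -\frac{1}{2}\int_M \sqrt{-1}\,\partial\dot\phi\wedge\bar\partial\dot\phi \wedge \Big(\sum_{\a=1}^{n-1} b_\a (n-\a)\chi^{n-\a-1}_\phi\wedge\omega^\a - cn\,\chi^{n-1}_\phi\Big).
\]
The cone condition \eqref{introduction:cone-condition} with $\psi = c$ is precisely the statement that the $(n-1,n-1)$-form in parentheses has a definite sign at the reference potential $v$. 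Provided this sign propagates along $\phi(t)$, the right-hand side is non-negative and $\hat J_\chi$ is convex in $t$. Granting this, if $\chi_u$ solves \eqref{lower:equation-1} then $u$ is a critical point of a convex function restricted to any path through $u$, and hence a minimum, which proves the second assertion. If in addition $\chi_{u_1}$ also solves \eqref{lower:equation-1}, both endpoints of $\phi(t)$ are minima, so the convex function is constant and its second derivative vanishes; the strict positivity of the relevant form then forces $\sqrt{-1}\,\partial\dot\phi\wedge\bar\partial\dot\phi \equiv 0$, so $u_1-u_0$ is spatially constant and $\chi_{u_0} = \chi_{u_1}$ as K\"ahler metrics.

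The main obstacle is guaranteeing this sign propagation along the interpolating path, since \eqref{introduction:cone-condition} is assumed only at a fixed reference $v$. For the minimum statement at a given $u_0$ it suffices to verify the second variation is non-negative \emph{at} $u_0$, which follows from the infinitesimal cone condition automatically satisfied by any classical solution of \eqref{lower:equation-1} together with the concavity computation in Section~\ref{preliminary}. For the uniqueness statement, if the linear path leaves the cone region, I would replace it with an $\epsilon$-geodesic $\phi_\epsilon$ in the Mabuchi $L^2$-metric following Chen's construction, whose geodesic-curvature contribution to the first term of \eqref{lower:functional-derivative-2} is non-negative, and then pass $\epsilon \to 0$ using the $C^{1,1}$ regularity of weak geodesics; alternatively, one can restrict attention to the open subset of $\mathcal{H}$ satisfying the cone condition and verify its convexity under linear interpolation as an algebraic consequence of the concavity of $\ln S_n - \ln\sum c_\a S_{n-\a}$ established earlier.
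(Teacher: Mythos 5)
Your proposal follows essentially the same route as the paper: the 1-cocycle property combined with the first- and second-variation formulas \eqref{lower:functional-derivative-1}--\eqref{lower:functional-derivative-2} along the linear interpolation path, with convexity forcing $\sqrt{-1}\,\partial\dot\phi\wedge\bpartial\dot\phi\equiv 0$ between two critical points and yielding a minimum at any critical point. The sign-propagation issue you flag is legitimate but is also left implicit in the paper's own proof; it is resolved exactly as in your second remedy, since any solution of \eqref{lower:equation-1} automatically satisfies the cone condition and the cone condition cuts out a convex set of K\"ahler forms in $[\chi]$, so the linear path between two solutions stays inside it.
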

\begin{proof}
The 1-cocycle condition tells us that $\hat J_\chi (v) = \hat J_\chi (u) + \hat J_{\chi_u} (v)$. 
If there are two metrics $\chi' = \chi_u$ and $\chi'' = \chi_{u'}$ satisfying equation~\eqref{lower:equation-1}. Applying \eqref{lower:functional-derivative-1} and \eqref{lower:functional-derivative-1} to $\hat J_{\chi_u}$ with $\phi (t) = t ( u' - u)$, 
\begin{equation}
	\frac{d \hat J_{\chi'}(\phi)}{d t} \Bigg|_{t = 0}=\frac{d \hat J_{\chi'}(\phi)}{d t} \Bigg|_{t = 1} =  0,
\end{equation}
and
\begin{equation}
\begin{aligned}
	\frac{d^2 \hat J_\chi(\phi)}{d t^2}
	&=  - \frac{1}{2}\int_M \sqrt{-1} \p\dot{\phi}\wedge\bpartial\dot{\phi} \wedge \Bigg(\sum^n_{\a = 1} b_\a (n - \a) \chi^{n - \a - 1}_\phi \wedge \omega^\a - c n \chi^{n - 1}_\phi \Bigg) \\
	&\geq \delta \int_M \sqrt{-1} \p (u - u')\wedge\bpartial(u - u') \wedge \chi^{n - 1}_\phi .
\end{aligned}
\end{equation}
So it must be that $u - u'$ is constant.

For any path $\phi$, we have
\begin{equation}
	\frac{d \hat J_{\chi'}(\phi)}{d t} \Bigg|_{t = 0}= 0,
\end{equation}
and 
\begin{equation}
	\frac{d^2 \hat J_{\chi'}(\phi)}{d t^2} \Bigg|_{t = 0} > 0 .
\end{equation}
So $\hat J_\chi (v) = \hat J_\chi (u) + \hat J_{\chi'} (v) \geq  \hat J_\chi (u)$

\end{proof}
An alternative approach is probably the method by Chen~\cite{Chen00a,Chen00b} using the $C^{1,1}$ geodesics in $\mathcal{H}$.

We recall the definition of properness in \cite{CollinsSzekelyhidi2014a}.
\begin{definition}
\label{lower:definition-proper}
We say that $\hat J_\chi$ is proper, if there are constants $C, \delta > 0$ such that for any $\chi_u > 0$, 
\begin{equation}
	\hat J_\chi (u) \geq - C + \delta \int_M u (\chi^n - \chi^n_u).
\end{equation}
\end{definition}

A corollary follows from Theorem~\ref{introduction:theorem-kahler} (or Theorem~\ref{introduction:theorem-kahler-J} ), Theorem~\ref{lower:theorem-uniqueness} and the perturbation method in \cite{CollinsSzekelyhidi2014a}.
\begin{theorem}
Suppose that the cone condition~\eqref{introduction:cone-condition} holds true for $\psi = c$. Then the functional $\hat J_\chi$ is bounded from below and then proper.
\end{theorem}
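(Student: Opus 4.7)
The plan is to handle the two assertions (boundedness below and properness) in turn, combining the convergence results already at our disposal with the perturbation idea of Collins and Sz\'ekelyhidi. For the lower bound one applies Theorem~\ref{introduction:theorem-kahler-J}: the cone condition at $\psi=c$ makes the generalized $J$-flow converge to a smooth potential $u_\infty$ satisfying the critical-point equation~\eqref{lower:equation-1}. By the second assertion of Theorem~\ref{lower:theorem-uniqueness}, $u_\infty$ realises the absolute minimum of $\hat J_\chi$ on $\mathcal{H}$, and hence $\hat J_\chi(u)\geq\hat J_\chi(u_\infty)>-\infty$ for every $u\in\mathcal{H}$.

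For properness the plan is to perturb the reference metric and appeal to the first part again. Set $\chi^\epsilon:=\chi-\epsilon\omega$; since $\chi$ is K\"ahler, $\chi^\epsilon$ remains K\"ahler for small $\epsilon>0$, with cohomological constant $c^\epsilon:=\int_M(\chi^\epsilon)^n\big/\sum_\a b_\a\int_M(\chi^\epsilon)^{n-\a}\wedge\omega^\a$ depending continuously on $\epsilon$. Because \eqref{introduction:cone-condition} is a strict open condition and $(\chi^\epsilon,c^\epsilon)\to(\chi,c)$ as $\epsilon\to 0$, the same witness $v$ certifies the cone condition for $(\chi^\epsilon,\omega,\psi=c^\epsilon)$ once $\epsilon$ is small. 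Theorem~\ref{introduction:theorem-kahler-J} applied to $(\chi^\epsilon,\omega)$ then produces a critical potential $u^\epsilon$, and Theorem~\ref{lower:theorem-uniqueness} gives $\hat J_{\chi^\epsilon}(u)\geq\hat J_{\chi^\epsilon}(u^\epsilon)$ for every $\chi^\epsilon$-admissible $u$.

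It remains to compare $\hat J_\chi$ with $\hat J_{\chi^\epsilon}$. Substituting $\chi=\chi^\epsilon+\epsilon\omega$ and $\chi_u=\chi^\epsilon_u+\epsilon\omega$ into~\eqref{long:J-property-1} and the analogous expression for $I(u)$, and collecting terms by order in $\epsilon$ via the telescoping identity $\chi^n-\chi_u^n=-\sqrt{-1}\,\p\bpartial u\wedge\sum_{i=0}^{n-1}\chi_u^i\wedge\chi^{n-1-i}$ together with integration by parts, one expects an identity of the form
\[
	\hat J_\chi(u)-\hat J_{\chi^\epsilon}(u)=\epsilon\,\delta_0\int_M u\,(\chi^n-\chi_u^n)+C_\epsilon+O(\epsilon^2),
\]
with $\delta_0>0$ explicit and $C_\epsilon$ depending only on the background data after the usual normalisation $\sup_M u=0$. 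Combined with the lower bound for $\hat J_{\chi^\epsilon}$ from the previous paragraph, fixing a small $\epsilon>0$ then yields Definition~\ref{lower:definition-proper} with $\delta\sim\epsilon\delta_0$.

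The main obstacle is the algebraic bookkeeping at the comparison step: one has to verify that after all the integrations by parts the leading $\epsilon$-coefficient is genuinely a positive multiple of $\int_M u(\chi^n-\chi_u^n)=\int_M\sqrt{-1}\,\p u\wedge\bpartial u\wedge\sum_{i=0}^{n-1}\chi_u^i\wedge\chi^{n-1-i}\geq 0$, and not some indefinite combination of $\int_M u\,\chi^{n-k}\wedge\chi_u^k$ that might change sign. A secondary technical point is that a $\chi$-admissible $u$ need not be $\chi^\epsilon$-admissible; this can be dealt with by replacing $u$ by $(1-s)u+sv$ for a suitable $s\sim\epsilon$ before invoking the $\chi^\epsilon$-lower bound, using the uniform positivity $\chi_v\geq c'\omega$ on $M$ to ensure admissibility and absorbing the resulting shift into the constant $C_\epsilon$.
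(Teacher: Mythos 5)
Your overall skeleton is the same as the paper's: the paper disposes of this theorem in one sentence, citing Theorem~\ref{introduction:theorem-kahler-J} (existence of a critical metric via the flow), Theorem~\ref{lower:theorem-uniqueness} (the critical potential minimizes $\hat J_\chi$, giving the lower bound), and ``the perturbation method in \cite{CollinsSzekelyhidi2014a}'' for properness. Your first paragraph is exactly this and is correct and complete modulo those two theorems. Your treatment of the perturbed cone condition for $(\chi-\epsilon\omega, c^\epsilon)$ is also fine: the inequality in \eqref{introduction:cone-condition} is strict on a compact manifold and $\chi_v-\epsilon\omega$, $c^\epsilon$ depend continuously on $\epsilon$, so the same $v$ works.

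The gap is in the comparison step, and it is not merely ``algebraic bookkeeping.'' As written, the identity $\hat J_\chi(u)-\hat J_{\chi^\epsilon}(u)=\epsilon\,\delta_0\int_M u(\chi^n-\chi_u^n)+C_\epsilon+O(\epsilon^2)$ with $C_\epsilon$ and the $O(\epsilon^2)$ term independent of $u$ cannot hold. Expanding $F_{\chi^\epsilon}(\phi)=\sum_\a b_\a(\chi_\phi-\epsilon\omega)^{n-\a}\wedge\omega^\a-c^\epsilon(\chi_\phi-\epsilon\omega)^n$ in powers of $\epsilon$ and integrating along the linear path, every coefficient of $\epsilon^j$ ($j\geq 1$) is a combination of integrals $\int_0^1\!\int_M u\,\chi_{su}^{k}\wedge\omega^{n-k}\,ds$; after integrating by parts each such integral equals a background constant plus nonnegative energy terms of the form $\int\sqrt{-1}\,\p u\wedge\bpartial u\wedge(\cdots)$, which grow without bound as $\mathrm{osc}(u)\to\infty$. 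So the $\epsilon^2$ and higher corrections are themselves of energy type and of the same order of magnitude as the term $\epsilon\delta_0\int u(\chi^n-\chi_u^n)$ you want to extract; the argument only closes if you verify that the \emph{full} difference (all orders in $\epsilon$) is bounded below by $\epsilon\delta_0\int u(\chi^n-\chi_u^n)-C_\epsilon$, e.g.\ by checking that the unfavorable energy contributions carry an extra factor of $\epsilon$ and can be absorbed by halving $\delta_0$. The same caveat applies to your admissibility fix: replacing $u$ by $(1-s)u+sv$ with $s\sim\epsilon$ changes $\hat J_\chi$ by $\int_0^s\int_M(v-u)\,F_\chi(u_t)\,dt$, which is again $\epsilon$ times energy-type integrals of $u$, not a constant that can be ``absorbed into $C_\epsilon$''; it too must be absorbed into the coercive term. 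None of this is fatal --- it is exactly what the Collins--Sz\'ekelyhidi computation does --- but as your proposal stands the decisive inequality of Definition~\ref{lower:definition-proper} is asserted rather than derived, and the form in which you assert it is not the form in which it is true.
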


\bigskip


\begin{thebibliography}{99}
\bibitem{Aubin78}
T. Aubin,
{\em \'Equations du type Monge-Amp\`ere sur les vari\'et\'es k\"ahl\'eriennes compactes}, 
(French)  Bull. Sci. Math. (2) {\bf 102} (1978), 63--95.

\bibitem{CNS3}
L. A. Caffarelli, L. Nirenberg and J. Spruck,
{\em The Dirichlet problem for nonlinear second-order elliptic equations III: Functions of eigenvalues of the Hessians},
{Acta Math.} {\bf 155} (1985), 261--301.

\bibitem{Calabi56}
E. Calabi,
{\em The space of K\"ahler metrics},
Proc. ICM, Amsterdam, Vol. {\bf 2}  (1954), 206--207.

\bibitem{Calabi57}
E. Calabi,
{\em On K\"ahler manifolds with vanishing canonical class},
in {\em Algebraic geometry and topology: A symposium in honor of S. Lefschetz}, 78--89. Princeton University Press, 1957.

\bibitem{Cao85}
H.-D. Cao,
{\em Deformation of K\"ahler metrics to K\"ahler-Einstein metrics on compact K\"ahler manifolds},
Invent. Math. {\bf 81} (1985), 359--372.

\bibitem{Chen00a}
X.-X. Chen,
{\em The space of K\"ahler metrics},
Journ. Diff. Geom. {\bf 56} (2000). 189--234.

\bibitem{Chen00b}
X.-X. Chen,
{\em On the lower bound of the Mabuchi energy and its application},
Int. Math. Res. Notices \textbf{12}  (2000),  607--623.

\bibitem{Chen04}
X.-X. Chen,
{\em A new parabolic flow in K\"ahler manifolds},
Comm. Anal. Geom. {\bf 12} (2004), 837--852.

\bibitem{Cherrier87}
P. Cherrier,
{\em Equations de Monge-Amp\`ere sur les vari\'et\'es hermitiennes compactes},
Bull. Sci. Math. {\bf 111} (1987), 343--385.

\bibitem{CollinsSzekelyhidi2014a}
T. C. Collins and G. Sz\'ekelyhidi,
{\em Convergence of the $J$-flow on toric manifolds},
preprint, arXiv:1412.4809.

\bibitem{Donaldson99a}
S. K. Donaldson,
{\em Moment maps and diffeomorphisms},
Asian J. Math. {\bf 3} (1999), 1--16.


\bibitem{Evans82}
L. C. Evans,
{\em Classical solutions of fully nonlinear, convex, second order elliptic equations},
Comm. Pure Appl. Math. {\bf 35} (1982), 333--363.

\bibitem{FLM11}
H. Fang, M.-J. Lai and X.-N. Ma,
{\em On a class of fully nonlinear flows in K\"ahler geometry},
J. Reine Angew. Math. {\bf 653} (2011), 189--220.



\bibitem{FL12}
 H. Fang and M.-J. Lai,
{\em On the geometric flows solving K\"ahlerian inverse $\sigma_k$ equations},
Pacific J. Math. {\bf 258}, no. 2  (2012), 291--304.


\bibitem{FL13}
H. Fang and M.-J. Lai,
{\em Convergence of general inverse $\sigma_k$-flow on K\"ahler manifolds with Calabi ansatz},
Trans. Amer. Math. Soc. {\bf 365}, no. 12 (2013), 6543--6567.



\bibitem{Gill11}
M. Gill,
{\em Convergence of the parabolic complex Monge-Amp\`ere equation on compact Hermitian manifolds},
Comm. Anal. Geom. {\bf 19}, no. 2 (2011), 277--304.


\bibitem{Guan2014a}
B. Guan,
{\em Second order estimates and regularity for fully nonlinear elliptic equations on Riemannian manifolds},
Duke Math. J. {\bf 163} (2014), 1491--1524.


\bibitem{GL10}
B. Guan and  Q.  Li,
{\em Complex Monge-Amp\`ere equations and totally real submanifolds},
Adv. Math. {\bf 225} (2010), 1185--1223.


\bibitem{GSun12}
B.~ Guan and W.~Sun,
{\em On a class of fully nonlinear elliptic equations on Hermitian manifolds},
Calc. Var. PDE, DOI 10.1007/s00526-014-0810-1.




\bibitem{GLZ}
P.-F. Guan, Q. Li and X. Zhang, 
{\em A uniqueness theorem in K\"ahler geometry}, 
Math.Ann. {\bf 345} (2009), 377--393.


\bibitem{Krylov82}
N. V. Krylov,\,
{\em Boundedly nonhomogeneous elliptic and parabolic equations},
Izvestiya Ross. Akad. Nauk. SSSR {\bf 46} (1982), 487--523.

\bibitem{LejmiSzekelyhidi13}
M. Lejmi and G. Sz\'ekelyhidi,
{\em The J-flow and stability},
preprint, arXiv:1309.2821.

\bibitem{LiShiYao2013}
H.-Z. Li, Y.-L. Shi and Y. Yao,
{\em A criterion for the properness of the $K$-energy in a general K\"ahler class},
Math. Ann., DOI 10.1007/s00208-014-1073-z. 





\bibitem{Mabuchi86}
T. Mabuchi,
{\em $K$-ernergy maps integrating Futaki invariants},
Tohoku Math. J. {\bf 38} (1986), no. 4, 575--593.

\bibitem{PhongSturm10}
D. H. Phong and J. Sturm,
{\em The Dirichlet problem for degenerate complex Monge-Amp\`ere equations},
Comm. Anal. Geom. {\bf 18} (2010), no. 1, 145--170.

\bibitem{SW08}
J. Song and B. Weinkove,
{\em On the convergence and singularities of the J-flow with applications
to the Mabuchi energy},
Comm. Pure Appl. Math. {\bf 61} (2008), 210--229.


\bibitem{SunDissertation}
W. Sun,
{\em On a class of complex Monge-Amp\`ere type equations on Hermitian manifolds},
Doctoral dissertation, The Ohio State University (2013).

\bibitem{Sun2013e}
W. Sun,
{\em On a class of fully nonlinear elliptic equations on closed Hermitian manifolds},
preprint,  arXiv:1310.0362.

\bibitem{Sun2013p}
W. Sun,
{\em Parabolic complex Monge-Amp\`ere type equations on closed Hermtian manifolds},
preprint, arXiv:1311.3002.

\bibitem{Sun2014e}
W. Sun,
{\em On a class of fully nonlinear elliptic equations on closed Hermitian manifolds II: $L^\infty$ estimate},
preprint, arXiv:1407.7630.

\bibitem{Sun2014u}
W. Sun,
{\em On uniform estimate of complex elliptic equations on closed Hermitian manifolds},
preprint, arXiv:1412.5001.


\bibitem{Sun2014g}
W. Sun,
{\em Generalized complex Monge-Amp\`ere type equations on closed Hermitian manifolds},
preprint, arXiv:1412.8192.


\bibitem{Szekelyhidi2014b}
G. Sz\'ekelyhidi,
{\em Fully non-linear elliptic equations on compact Hermitian manifolds},
preprint, arXiv:1501.02762.

\bibitem{TWv10a}
V. Tosatti and B. Weinkove,
{\em Estimates for the complex Monge-Amp\`ere equation on Hermitian and
balanced manifolds},
Asian J. Math. {\bf 14} (2010), 19--40.

\bibitem{TWv10b}
V. Tosatti and B. Weinkove,
{\em The complex Monge-Amp\`ere equation on compact Hermitian manifolds},
J. Amer. Math. Soc. {\bf 23} (2010), 1187--1195.

\bibitem{TWv11}
V. Tosatti and B. Weinkove,
{\em On the evolution of a Hermitian metric by its Chern-Ricci form},
J. Differential Geom. {\bf 99} (2015), no.1, 125--163.






\bibitem{Wang1992a}
L.-H. Wang,
{\em On the regularity theory of fully nonlinear parabolic equations. I.},
Comm. Pure Appl. Math. {\bf 45} (1992), no. 1, 27--76.

\bibitem{Wang1992b}
L.-H. Wang,
{\em}
Comm. Pure Appl. Math. {\bf 45} (1992), no. 2, 141--178.

\bibitem{Weinkove04}
B. Weinkove,
{\em Convergence of the J-flow on K\"ahler surfaces},
Comm. Anal. Geom. {\bf 12} (2004), 949--965.

\bibitem{Weinkove06}
B. Weinkove,
{\em On the J-flow in higher dimensions and the lower boundedness
of the Mabuchi energy},
J. Differential Geom. {\bf 73} (2006), 351--358.

\bibitem{Yau78}
S.-T. Yau,
{\em On the Ricci curvature of a compact K\"ahler manifold and the complex
Monge-Amp\`ere equation. I.}
Comm. Pure Appl. Math. {\bf 31} (1978), 339--411.


\end{thebibliography}
\end{document}